\numberwithin{equation}{section}
\theoremstyle{plain}
\newtheorem{thm}{\protect\theoremname}[section]
  \theoremstyle{plain}
  \newtheorem{conjecture}[thm]{\protect\conjecturename}
  \theoremstyle{remark}
  \theoremstyle{plain}
  \newtheorem{prop}[thm]{\protect\propositionname}
  \theoremstyle{plain}
  \newtheorem{lem}[thm]{\protect\lemmaname}
  \theoremstyle{definition}
  \newtheorem{defn}[thm]{\protect\definitionname}
\newenvironment{customthm}[1]
  {\innercustomthm}
  {\endinnercustomthm}
\date{}
\theoremstyle{plain}
\author{}
\date{\today}
  \providecommand{\conjecturename}{Conjecture}
  \providecommand{\definitionname}{Definition}
  \providecommand{\lemmaname}{Lemma}
  \providecommand{\propositionname}{Proposition}
  \providecommand{\remarkname}{Remark}
\providecommand{\theoremname}{Theorem}
\begin{document}
\global\long\def\A{\mathcal{A}}
\global\long\def\R{\mathbf{R}}
\global\long\def\ab{\mathrm{ab}}
\global\long\def\Homeo{\mathrm{Homeo}}
\global\long\def\SL{\mathrm{SL}}
\global\long\def\G{\Gamma}
\global\long\def\H{\mathcal{H}}
\global\long\def\F{\mathcal{F}}
\global\long\def\C{\mathcal{M}}
\global\long\def\Sp{\mathrm{Sp}}
\global\long\def\Z{\mathbf{Z}}
\global\long\def\RR{\mathfrak{R}}
\global\long\def\ZR{\mathcal{ZR}}
\global\long\def\zip{\mathsf{\mathrm{zip}}}
\global\long\def\M{\mathcal{M}}
\global\long\def\k{\kappa}
\global\long\def\End{\mathrm{End}}
\global\long\def\Mod{\mathrm{Mod}}
\global\long\def\X{\mathcal{X}}
\global\long\def\MCG{\mathrm{MCG}}
\global\long\def\Aut{\mathrm{Aut}}
\global\long\def\i{\iota}
\global\long\def\V{\mathcal{V}}
\global\long\def\area{\mathrm{area}}
\global\long\def\J{\mathcal{J}}
\global\long\def\S{\mathfrak{S}}
\global\long\def\e{\epsilon}
\global\long\def\Leb{\mathrm{Leb}}
\global\long\def\B{\mathcal{B}}
\global\long\def\RI{Q}
\global\long\def\hRI{\hat{\RI}}
\global\long\def\T{\mathcal{T}}
\global\long\def\K{\mathcal{K}}
\global\long\def\L{\mathcal{L}}
\global\long\def\new{\mathrm{new}}
\global\long\def\pr{\mathrm{pr}}
\global\long\def\Var{\mathrm{Var}}
\global\long\def\Ad{\mathrm{Ad}}
\global\long\def\Ind{\mathrm{Ind}}
\global\long\def\Hom{\mathrm{Hom}}
\global\long\def\SO{\mathrm{SO}}
\global\long\def\spec{\mathrm{spec}}
\global\long\def\Comp{\mathrm{Comp}}
\global\long\def\P{\mathcal{P}}
\global\long\def\LL{\mathbf{L}}
\global\long\def\cover{\mathrm{cover}}
\global\long\def\K{\mathcal{K}}

\global\long\def\k{\kappa}

\global\long\def\N{\mathbf{N}}
\global\long\def\E{\mathcal{E}}

\title{On Selberg's Eigenvalue Conjecture for moduli spaces of abelian differentials}

\author{Michael Magee}

\maketitle
\begin{abstract}
J.-C. Yoccoz proposed a natural extension of Selberg's Eigenvalue Conjecture to moduli spaces of abelian differentials. We prove an approximation to this conjecture.
This gives a qualitative generalization of Selberg's $\frac{3}{16}$ Theorem to moduli spaces of abelian differentials on surfaces of genus $\geq 2$.
 \end{abstract}

\section{Introduction}
Let $\Lambda := \SL_2(\Z)$ be the modular group. Then  $\Lambda$ acts on the hyperbolic upper half plane $\mathbb{H}$ by M\"{o}bius transformations and the quotient $X := \Lambda \backslash \mathbb{H}$ is an orbifold Riemann surface. We denote by $\Lambda(q)$ the \emph{principal congruence subgroup} of $\Lambda$ given by the kernel of the reduction modulo $q$ map $\Lambda \to \SL_2(\Z/q \Z)$. Then $\Lambda(q)$ is a normal subgroup of $\Lambda$ and for $q\geq 2$
$$
X(q) := \Lambda(q) \backslash \mathbb{H}
$$ 
is a Riemann surface.

If we parameterize points in $\mathbb{H}$ by $x + iy$ with $x,y\in \mathbf{R}$ and $y>0$ then the Laplacian on $\mathbb{H}$ is given by
$$
\Delta = - y^2\left(  \frac{\partial^2}{\partial x ^2} +  \frac{\partial^2}{\partial y ^2}  \right).
$$
This operator is invariant under M\"{o}bius transformations and hence descends to an operator  on smooth
 functions on $X(q)$. The surface $X(q)$ also has a measure $\mu_q$ induced from the $\Lambda$-invariant volume form $\frac{ dx \wedge dy}{y^2}$ on $\mathbb{H}$.  The Laplacian  extends to an unbounded operator  $\Delta_{X(q)}$ on $L^2(X,\mu_q)$. For all $q\geq 2$, $\Delta_{X(q)}$ has a simple eigenvalue at $0$ and the spectrum of $\Delta_{X(q)}$ below $\frac{1}{4}$ is discrete. Therefore we may write $\lambda_1(X(q))$ for the smallest non-zero eigenvalue of $\Delta_{X(q)}$. In a celebrated 1965 paper \cite{SELBERG316}, Selberg proved 
\begin{thm}[Selberg's $\frac{3}{16}$ Theorem]
For all $q\geq 2$, $\lambda_1(X(q))\geq \frac{3}{16}$.
\end{thm}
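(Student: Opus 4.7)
The plan is to rule out exceptional eigenvalues of $\Delta_{X(q)}$ --- those $\lambda < \frac{1}{4}$ --- by writing $\lambda = s(1-s)$ with $s \in (\frac{1}{2},1)$ and showing $s \leq \frac{3}{4}$. Each cusp $\mathfrak{a}$ of $X(q)$ has a scaling matrix $\sigma_{\mathfrak{a}} \in \SL_2(\R)$ conjugating its stabilizer in $\Lambda(q)$ to integer translations, and a cuspidal $\Delta_{X(q)}$-eigenfunction $\phi$ with parameter $s$ admits at $\mathfrak{a}$ the Fourier expansion
$$
\phi(\sigma_{\mathfrak{a}}z) \;=\; \sum_{n \neq 0} \rho_{\mathfrak{a}}(n)\,\sqrt{y}\,K_{s-1/2}(2\pi|n|y)\,e^{2\pi i n x},
$$
with $K_{\nu}$ the Macdonald--Bessel function; the zeroth coefficient vanishes by cuspidality. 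The goal thus reduces to showing the coefficients $\rho_{\mathfrak{a}}(m)$ must vanish identically whenever $s > \frac{3}{4}$.

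Next I would introduce the Poincar\'e series
$$
U_{\mathfrak{a}m}(z,s) \;:=\; \sum_{\gamma \in \Lambda(q)_{\mathfrak{a}} \backslash \Lambda(q)} (\mathrm{Im}\,\sigma_{\mathfrak{a}}^{-1}\gamma z)^{s}\,e^{2\pi i m \,\sigma_{\mathfrak{a}}^{-1}\gamma z},
$$
which converges absolutely for $\mathrm{Re}(s) > 1$. Bruhat decomposition of $\sigma_{\mathfrak{a}}^{-1}\Lambda(q)\sigma_{\mathfrak{b}}$ into the upper-triangular cell and the cells parametrized by the lower-left entry $c>0$ expresses the $n$th Fourier coefficient of $U_{\mathfrak{a}m}$ at cusp $\mathfrak{b}$ as a Kloosterman Dirichlet series $\sum_{c>0} S_{\mathfrak{a}\mathfrak{b}}(m,n;c)\,c^{-2s}$ times an explicit Bessel transform that is nonzero for real $s \in (\frac{1}{2},1)$. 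Spectrally expanding $U_{\mathfrak{a}m}(\cdot,s)$ in an orthonormal eigenbasis of $L^{2}(X(q),\mu_q)$ and extracting the $\phi$-component identifies, up to a nonvanishing factor, the product $\rho_{\mathfrak{a}}(m)\overline{\rho_{\mathfrak{b}}(n)}$ with the Dirichlet series above evaluated at the exceptional parameter $s$.

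The crux is Weil's bound
$$
|S_{\mathfrak{a}\mathfrak{b}}(m,n;c)| \;\leq\; \tau(c)\,\gcd(m,n,c)^{1/2}\,c^{1/2},
$$
a consequence of the Riemann hypothesis for curves over $\mathbf{F}_p$, which gives absolute convergence of the Kloosterman Dirichlet series for $\mathrm{Re}(s) > \frac{3}{4}$. Hence every Fourier coefficient of a putative cuspidal eigenfunction with parameter $s > \frac{3}{4}$ vanishes, contradicting nonvanishing of $\phi$. This yields $\lambda = s(1-s) \geq \frac{3}{4}\cdot\frac{1}{4} = \frac{3}{16}$.

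The main obstacle is the Weil bound itself, which is the deep arithmetic input; the remainder is harmonic analysis on $\mathbb{H}$ together with careful bookkeeping of Bruhat cells and scaling matrices. A secondary technicality is the contribution of the continuous (Eisenstein) spectrum to the spectral expansion of $U_{\mathfrak{a}m}$, but since Eisenstein series have parameter on the critical line they play no role in ruling out exceptional discrete eigenvalues.
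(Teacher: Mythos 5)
The paper itself does not prove this theorem; it is recalled as background and attributed to Selberg's 1965 paper \cite{SELBERG316}. Your outline is the now-standard account of Selberg's argument (cf. Iwaniec, \emph{Spectral Methods of Automorphic Forms}, Ch.~11), and the essential ingredients --- the Bessel--Fourier expansion of Maass cusp forms, the Poincar\'e series $U_{\mathfrak{a}m}$, the Bruhat decomposition producing the Kloosterman--Selberg zeta function $Z_{\mathfrak{a}\mathfrak{b}}(m,n;s')=\sum_{c>0}S_{\mathfrak{a}\mathfrak{b}}(m,n;c)\,c^{-2s'}$, and the Weil bound giving absolute convergence for $\Re(s')>\tfrac34$ --- are all correctly identified, with the numerology right ($\frac12 - 2\Re(s')<-1$).

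The one point you should sharpen is the phrase ``evaluated at the exceptional parameter $s$.'' The Poincar\'e series converges only for $\Re(s')>1$, and unfolding the inner product $\langle U_{\mathfrak{a}m}(\cdot,s'),\phi\rangle$ produces the Mellin integral
\[
\int_0^\infty y^{s'-3/2}e^{-2\pi my}K_{s-1/2}(2\pi my)\,dy
=\frac{\sqrt{\pi}\,\Gamma(s'+s-1)\,\Gamma(s'-s)}{(4\pi m)^{s'-1/2}\,\Gamma(s')},
\]
which has a \emph{pole} at $s'=s$, not a finite nonzero value. What the spectral expansion actually delivers is a meromorphic continuation of $Z_{\mathfrak{a}\mathfrak{b}}(m,n;\cdot)$ to $\Re(s')>\tfrac12$, with poles precisely at the exceptional parameters $s_j$ and residues proportional to $\sum_{j':s_{j'}=s_j}\rho^{\mathfrak{a}}_{j'}(m)\overline{\rho^{\mathfrak{b}}_{j'}(n)}$. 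Weil's bound then makes $Z_{\mathfrak{a}\mathfrak{b}}$ holomorphic for $\Re(s')>\tfrac34$, so for any putative $s_j$ in that range the residue vanishes; taking $\mathfrak{a}=\mathfrak{b}$, $m=n$ gives $\sum_{j'}|\rho^{\mathfrak{a}}_{j'}(m)|^2=0$ for all $m,\mathfrak{a}$, hence all such cusp forms are zero. Replace ``evaluated at'' by ``residue at'' and the argument is exactly Selberg's; as written it suggests a direct evaluation inside the region of convergence, which is not available.
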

At the same time, Selberg made the following conjecture.
\begin{conjecture}[Selberg's Eigenvalue Conjecture]\label{conj:selberg}
For all $q\geq 2$, $\lambda_1(X(q))\geq \frac{1}{4}$.
\end{conjecture}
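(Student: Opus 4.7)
The statement in question is Selberg's Eigenvalue Conjecture, which remains one of the central open problems in the theory of automorphic forms; any proof plan I can honestly describe is really a description of the currently viable strategies rather than a route I can actually execute. The plan is to rephrase the conjecture in representation-theoretic terms and attack it via Langlands functoriality. A Laplace eigenvalue $\lambda = \tfrac{1}{4} - s^2 \in (0,\tfrac{1}{4})$ on $X(q)$ corresponds to a cuspidal Maass form for $\Lambda(q)$ whose associated automorphic representation $\pi$ of $\mathrm{GL}_2(\mathbb{A}_{\mathbf{Q}})$ is cuspidal but non-tempered at the archimedean place, with archimedean Satake parameters $\{s, -s\}$, $s \in (0,\tfrac{1}{2})$. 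Conjecture \ref{conj:selberg} is thus equivalent to the archimedean Ramanujan conjecture for $\mathrm{GL}_2$ over $\mathbf{Q}$ at principal congruence level.

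The approach I would pursue is the symmetric power method of Kim--Sarnak: show that for every $k \geq 1$, the $k$-th symmetric power lift $\mathrm{Sym}^k \pi$ is an automorphic representation of $\mathrm{GL}_{k+1}(\mathbb{A}_{\mathbf{Q}})$ whose $L$-function satisfies a suitable standard bound at the edge of the critical strip. Applied to the archimedean Satake parameters, the non-negativity of the Dirichlet coefficients of $L(s, \mathrm{Sym}^k \pi \otimes \overline{\mathrm{Sym}^k \pi})$ together with a Rankin--Selberg convexity estimate forces $|s| \leq \frac{1}{2} - \frac{1}{k+1} + o(1)$, and letting $k \to \infty$ forces $|s| \leq \frac{1}{2}$, i.e. temperedness, which is exactly Conjecture \ref{conj:selberg}. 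Concretely, the step I would carry out first is to combine the Newton--Thorne symmetric power functoriality theorem (currently established for holomorphic cusp forms) with recent progress on automorphy lifting for non-self-dual residual representations, in order to extend symmetric power functoriality to arbitrary Maass forms.

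The main obstacle, and the reason the conjecture has resisted proof for nearly sixty years, is exactly this last point: the Newton--Thorne machinery relies on the existence of $p$-adic Galois representations attached to $\pi$, and for cuspidal Maass forms on $\mathrm{GL}_2/\mathbf{Q}$ no such Galois representations are known to exist. Bypassing this would either require a direct construction of these Galois representations (a problem of the same depth as the conjecture itself) or an entirely different analytic input, for example a Kuznetsov-type amplification strong enough to detect exceptional archimedean spectrum below $\tfrac{1}{4}$ without going through the functorial transfer. In the absence of either breakthrough, partial progress proceeds by using only finitely many symmetric powers; with $\mathrm{Sym}^3$ and $\mathrm{Sym}^4$ (the presently known cases for Maass forms) the Kim--Sarnak bound $\lambda_1(X(q)) \geq \tfrac{975}{4096}$ is what one obtains, and improving the exponent by this route requires controlling $\mathrm{Sym}^k$ for a growing range of $k$.

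Given that the conjecture itself is out of reach by any method I could execute in full, the natural intermediate target — and the one pursued in the body of this paper — is to prove a \emph{uniform spectral gap} along the congruence tower (or its analogue in the setting of moduli spaces of abelian differentials), without pinning down the optimal constant $\tfrac{1}{4}$. This replaces the functoriality obstacle with a representation-theoretic or expander-type problem that is tractable, and is presumably why the abstract advertises only an \emph{approximation} to Conjecture \ref{conj:selberg}.
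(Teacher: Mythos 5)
You have correctly recognized that the statement in question is a \emph{conjecture} (Selberg's Eigenvalue Conjecture), not a theorem, and that the paper does not prove it — nor does anyone. There is therefore no proof in the paper to compare against, and your decision to describe the landscape rather than manufacture a fake argument is the right call. Your survey is accurate: the equivalence with the archimedean Ramanujan conjecture for $\mathrm{GL}_2/\mathbf{Q}$, the symmetric-power route of Luo--Rudnick--Sarnak and Kim--Sarnak yielding the $\tfrac{975}{4096}$ bound, and the fundamental obstruction that Maass forms on $\mathrm{GL}_2/\mathbf{Q}$ are not known to carry Galois representations (so Newton--Thorne-style automorphy lifting does not apply) all reflect the actual state of the art. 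Your closing observation is also on target: the paper's contribution is not an attack on Conjecture \ref{conj:selberg} itself but the formulation and proof of a uniform spectral gap in the analogous setting of congruence covers of strata of abelian differentials, via a Brooks--Burger/Bourgain--Gamburd--Sarnak expansion argument rather than $L$-function methods. One small refinement worth noting: the paper's route to its approximation theorem is purely dynamical (transfer operators, Dolgopyat estimates, property (T), quasirandomness of $\Sp_{2g}(\Z/q\Z)$) rather than automorphic; so while your "uniform spectral gap" framing is correct in spirit, the mechanism is further from the functoriality picture than your phrase "representation-theoretic or expander-type problem" might suggest — it is firmly the expander side.
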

This conjecture cannot be pushed any further since there are examples \cite{MAASS} of $q$ such that $\Delta_{X(q)}$ has an eigenvalue at $\frac{1}{4}$.
Progress on Conjecture \ref{conj:selberg} has been made by several authors over the interim decades, including works of Gelbart and Jacquet ($\lambda_1 > \frac{3}{16}$) \cite{GJ}, Luo, Rudnick and Sarnak  ($\lambda_1 \geq \frac{171}{784}$) \cite{LRS}, and Iwaniec ($\lambda_1 \geq \frac{10}{49}$) \cite{IWANIEC}. The current best result is due to Kim and Sarnak \cite{KIM} who proved for all $q\geq 2$,
$$
\lambda_1(X(q))\geq\frac{975}{4096} \approx 0.238 .
$$

Selberg's conjecture remains one of the fundamental open questions of automorphic forms; see the expository articles of Sarnak  \cite{SARNAK1, SARNAK2}.

Selberg's conjecture can also be stated in terms of representation theory. This is fitting with Selberg's original motivation\footnote{See \cite{HS}.} of Conjecture \ref{conj:selberg} as an archimedean analog of the Ramanujan-Petersson conjectures. The equivalence classes of irreducible unitary representations (\emph{unitary dual}) of $\SL_2(\R)$ were classified by 
Bargmann  \cite{BARGMANN}: one has 
the trivial representation, the principal series,
complementary series, discrete series and limits of discrete series. Of particular interest to
us are the complementary series $\Comp^{u}$ that are indexed by a
parameter $u\in(0,1)$, see \cite[pg. 36]{KNAPP} for a precise description
of these representations. 

For each $q\geq 2$, we obtain a unitary representation of $\SL_2(\R)$ on $L^2( \Lambda(q) \backslash \SL_2(\R) )$ by right translation. This
 representation can be decomposed as a direct integral over a projection valued measure on the unitary dual of $\SL_2(\R)$. Conjecture \ref{conj:selberg} is equivalent to 
\begin{customthm}{\ref{conj:selberg}*}\label{eight}
For all $q \geq 2$, the measure on the unitary dual of $\SL_2(\R)$ that decomposes $L^2( \Lambda(q) \backslash \SL_2(\R) )$ is supported away from complementary series representations.
\end{customthm}

The point of view taken in this work is that $\Lambda \backslash \SL_2(\R)$ is the moduli space of unit area translation surfaces of genus 1 and hence Selberg's Eigenvalue Conjecture is a conjecture about moduli spaces and their covering spaces. A \emph{ translation surface} is a topological surface $S$ with a finite subset $\Sigma$, together with a set of complex charts on $S - \Sigma$ such that all transition functions are translations, and the charts extend to conical singularities at $\Sigma$. Let $\Sigma = \{A_1 , \ldots , A_s \}$. The conical singularity at $A_i$ is required to have cone angle $2 \pi (\kappa_i + 1)$ with $\kappa_i \in \Z_+$ and Gauss-Bonnet forces the relation
$$
\sum_{i=1}^s \kappa_i = 2g-2.
$$
Translation surfaces can be equivalently be thought of as abelian differentials with respect to a complex structure on $S$. The zeros of the differential correspond to the conical singularities of the translation surface.

The moduli space of translation surfaces of genus $g \geq 2$ is stratified according to the partitions $\kappa = (\kappa_1 , \ldots ,\kappa_s)$. A stratum $\H(\kappa)$ need not be connected, but there are finitely many connected components that are understood by work of Kontsevich and  Zorich \cite{KZ}. We let $\H^{(1)}(\k)\subset \H(\kappa)$ denote the unit area translation surfaces in $\H(\k)$. In this paper, $\C$ will be a connected component of $\H^{(1)}(\k)$.
Since $\H^{(1)}(\k)$ can be obtained as a quotient of a Teichm\"{u}ller space by the mapping class group $\Gamma=\Gamma(S,\Sigma)$ of $(S,\Sigma)$ (see Section \ref{sub:Abelian-differentials-and}), we may define 
\emph{congruence covers} via the natural family of maps
\begin{equation}
\Pi_q : \Gamma \mapsto \Aut( H_1(S , \Z/q\Z)) .\label{eq:reduction}
\end{equation}
The \emph{principal congruence subgroup}  $\Gamma(q)$ is defined to be the kernel of $\Pi_q$. By considering moduli only up to $\Gamma(q)$, and not $\Gamma$, for each connected component $\C$ of $\H^{(1)}(\kappa)$ we obtain a congruence cover $\C(q)$ generalizing $\Lambda(q) \backslash \SL_2(\R)$. The details of this construction are given in Section \ref{sub:Abelian-differentials-and}.

Each component $\C$ has the following associated objects generalizing those attached to $\Lambda \backslash \SL_2(\R)$:
\begin{itemize}
\item There is an action of $\SL_2(\R)$ on $\C$. The restriction of the $\SL_2(\R)$ action to the one parameter diagonal subgroup gives a flow on $\C$ called the \emph{Teichm\"{u}ller flow} that generalizes the geodesic flow on the unit tangent bundle of $X$.
\item There is a unique   probability measure $\nu_\C$ on $\C$ that is $\SL_2(\R)$-invariant,  ergodic for the Teichm\"{u}ller flow, and in the Lebesgue class with respect to a natural affine orbifold structure on $\C$.  This is due to works of Masur \cite{MASUR} and Veech
\cite{VEECHGAUSS}. 
\item The space $\SO(2) \backslash \C$ is locally foliated by $\mathbb{H}$ and hence it is possible to define a \emph{foliated Laplacian} $\Delta_\C$ on $\SO(2) \backslash \C$ generalizing $\Delta_X$. This operator has a simple eigenvalue at zero and by a result of Avila and Gou\"{e}zel \cite{AG}, its spectrum below $\frac{1}{4}$ has no accumulation points other than possibly at $\frac{1}{4}$.
\end{itemize} 
Each of these objects lifts to $\C(q)$, so there is an $\SL_2(\R)$ action on $\C(q)$ preserving a finite measure $\nu_{\C(q)}$, and a foliated Laplacian $\Delta_{\C(q)}$ whose spectrum below $\frac{1}{4}$ does not accumulate\footnote{By \cite[Remark 2.4]{AG} this result also applies to $\C(q)$.} away from $\frac{1}{4}$. Hence we can write $\lambda_1(\C(q))$ for the infimum of the non-zero spectrum\footnote{In contrast to the situation with $X$, where it is known \cite{SELBERGTRACE} that there are infinitely many eigenvalues of $\Delta_X$, we do not know whether $\Delta_{\M}$ or $\Delta_{\M(q)}$ have any non-zero eigenvalues.}
 of $\Delta_{\C(q)}$. The following extension of Selberg's conjecture to genus $g\geq 2$ was proposed by Yoccoz\footnote{The formulation of the conjecture appears in print in \cite{AG}, although  Avila and Gou\"{e}zel  stopped short of making the conjecture
 because of lack of evidence. We learned from C. Matheus that Yoccoz had made this conjecture in private.}.
\begin{conjecture}[Yoccoz] \label{selbergextension} For all $q\geq 2$, and any connected component $\C$ of a stratum,
\begin{description}
\item[A.] $\lambda_1(\C(q)) \geq \frac{1}{4}$.
\item[B.] The measure on the unitary dual of $\SL_2(\R)$ that decomposes $L^2( \C(q) , \nu_{\C(q)} )$ is supported away from complementary series representations.
\end{description}
\end{conjecture}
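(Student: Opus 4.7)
Since the unique $\SO(2)$-fixed vector in each complementary series representation $\Comp^u$ corresponds under the Harish-Chandra dictionary to an eigenfunction of $\Delta_{\C(q)}$ with eigenvalue $\frac{1-u^2}{4} \in (0,\frac{1}{4})$, and the spectrum of $\Delta_{\C(q)}$ below $\frac{1}{4}$ is discrete by Avila-Gou\"ezel, parts A and B are equivalent: a complementary series appears in the decomposition of $L^2(\C(q),\nu_{\C(q)})$ if and only if it contributes an eigenvalue of $\Delta_{\C(q)}$ strictly between $0$ and $\frac{1}{4}$. My plan is therefore to target B directly and deduce A.

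The first step is to decompose $L^2(\C(q),\nu_{\C(q)})$ under the finite Galois deck group $G_q := \Pi_q(\G) \subseteq \Aut(H_1(S,\Z/q\Z))$ of the cover $\C(q) \to \C$ into isotypic components
\begin{equation*}
L^2(\C(q),\nu_{\C(q)}) = \bigoplus_{\rho \in \widehat{G_q}} L^2(\C(q))_\rho,
\end{equation*}
where the trivial-$\rho$ summand is just $L^2(\C,\nu_\C)$. For each nontrivial irreducible $\rho$, I would identify $L^2(\C(q))_\rho$ with $L^2$-sections of the flat Hermitian bundle $\V_\rho \to \C$ associated to $\rho \circ \Pi_q$, and use the Avila-Gou\"ezel anisotropic-Banach-space machinery to reformulate the absence of complementary series in this summand as the absence of resonances inside a prescribed half-disk of parameters for a $\rho$-twisted transfer operator $\L_\rho$ for the Teichm\"uller flow on sections of $\V_\rho$. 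Part B then becomes: for every $q$ and every nontrivial $\rho$, prove such a resonance-free region, uniformly. To this end I would combine two ingredients. First, a Dolgopyat-style contraction argument driven by a uniform-in-$\rho$ expander-type estimate for the $G_q$-action on the symplectic congruence data of the fiber of $\C(q) \to \C$, yielding an effective but non-sharp resonance-free region (this is the kind of input that produces the qualitative $\frac{3}{16}$-analog announced in the abstract). Second, a sharpening mechanism that transfers spectral bounds from arithmetically distinguished sub-loci of $\C$---for instance, square-tiled origami or Veech curves whose affine groups are commensurable with congruence subgroups of $\SL_2(\Z)$---to the full stratum by a restriction-of-spectrum argument, bootstrapping to the Ramanujan threshold using classical Selberg-type bounds on those loci.

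The main obstacle is the second ingredient. The value $\frac{1}{4}$ is precisely the archimedean Ramanujan boundary for $\SL_2(\R)$, and Conjecture \ref{selbergextension} is strictly stronger than Selberg's Conjecture \ref{conj:selberg} in spirit: any uniform-in-$q$ bound at exactly $\frac{1}{4}$ across all nontrivial $\rho$ is a Ramanujan-type statement for which no Langlands functoriality is available. The deck groups $\Sp(2g,\Z/q\Z)$ of the covers carry no known Hecke structure analogous to the symmetric-power lifts on $\mathrm{GL}_2$ which power Kim-Sarnak's approach, and the Teichm\"uller flow is not natively an arithmetic object. Absent such a structure, any realistic present-day attack can only deliver an approximation of the $\frac{3}{16}$-type, which is indeed what the paper announces proving. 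Achieving the sharp $\frac{1}{4}$ bound and the complete elimination of complementary series would require both a resolution of the classical Selberg conjecture itself and a genuinely new intrinsic arithmetic input on moduli spaces of abelian differentials that has yet to be identified.
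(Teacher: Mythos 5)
The statement you are trying to prove is Conjecture \ref{selbergextension}: the paper does not prove it and does not claim to. It is an open conjecture (suggested by Avila and Gou\"{e}zel), strictly analogous in strength to Selberg's Conjecture \ref{conj:selberg}, and the paper only establishes the approximation in Theorem \ref{main}: a gap $\lambda_1(\C(q))\geq\epsilon$ for some unspecified $\epsilon>0$, exclusion of complementary series only with parameter $u\in(1-\eta,1)$, and only for $q$ coprime to some $Q_0$ --- not the sharp $\frac14$ bound for all $q\geq 2$. Your own final paragraph concedes exactly this, so what you have written is not a proof of the statement; it is a strategy sketch for the weaker approximation together with an (accurate) explanation of why the sharp statement is out of reach. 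There is therefore nothing to certify: no argument in your proposal, and none in the paper, establishes the conjecture.

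On the content of the sketch: your first ingredient (isotypic decomposition of $L^2(\C(q),\nu_{\C(q)})$ under the deck group, twisted transfer operators, a Dolgopyat-type high-frequency estimate combined with uniform expansion/quasirandomness for the congruence fibre) is essentially the route the paper actually takes to prove Theorem \ref{main}, though the paper works through the Rauzy--Veech/zippered-rectangles symbolic model rather than an anisotropic Banach space for the flow itself, and it crucially needs the Guti\'{e}rrez-Romo theorem that the Rauzy--Veech group is finite index in $\Sp_{2g}(\Z)$ to get property (T) as input; note also that the deck group is $\Pi_q(\Gamma)$, whose surjectivity onto $\Sp_{2g}(\Z/q\Z)$ is itself a nontrivial point the paper sidesteps. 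Your second ingredient --- bootstrapping to the Ramanujan threshold $\frac14$ by ``restriction of spectrum'' from square-tiled or Veech sub-loci where classical Selberg-type bounds hold --- is not a valid mechanism even in outline: eigenfunctions (or complementary-series components) of the foliated Laplacian on $\C(q)$ do not restrict to eigenfunctions on a closed $\SL_2(\R)$-orbit, the sub-loci have measure zero for $\nu_{\C}$ so they see none of the $L^2$ spectrum of the stratum, and there is no known trace-formula or functoriality structure relating the two spectra in either direction. Even granting Selberg's conjecture on every arithmetic Teichm\"{u}ller curve, no implication for $\lambda_1(\C(q))$ would follow. So the gap is not merely that the constants are weak; the proposed sharpening step has no mathematical content that could close it.
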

The  main theorem of this paper gives an approximation to Conjecture \ref{selbergextension}.
\begin{thm}\label{main}  For any connected component $\C$ of a stratum, there exists $\epsilon, \eta>0$ and $Q_0 \in \Z_+$ such that for all $q$ coprime to $Q_0 $ the following hold.
\begin{description}
\item[A.] $\lambda_1(\C(q)) \geq \epsilon$.
\item[B.] The measure on the unitary dual of $\SL_2(\R)$ that decomposes $L^2( \C(q) , \nu_{\C(q)} )$ is supported away from complementary series representations $\Comp^u$ with $u \in (1-\eta,1)$.
\item[C.] The Teichmüller
flow on $\C(q)$ has exponential decay of correlations on compactly
supported $C^{1}$ observables with a rate of decay that is independent of $q$. 
\end{description}
\end{thm}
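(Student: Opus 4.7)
My plan is to reduce all three statements to a uniform spectral gap for a family of twisted transfer operators associated with the Rauzy--Veech coding of the Teichm\"uller flow on $\C$. Parts \textbf{A} and \textbf{B} are essentially equivalent via the dictionary between Laplacian eigenvalues on $\SO(2)\backslash \C(q)$ and complementary series parameters ($\lambda=(1-u^{2})/4$), so it suffices to exclude complementary series near the trivial representation. Part \textbf{C} (exponential decay of correlations) is the direct dynamical consequence of a uniform spectral gap for transfer operators via the Dolgopyat--Pollicott--Ruelle mechanism, and uniform mixing of the Teichm\"uller flow converts into uniform decay of $\SL_2(\R)$ matrix coefficients on $L^{2}(\C(q),\nu_{\C(q)})$, from which \textbf{B} follows by the Bargmann classification. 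Hence the core task is to establish a uniform spectral gap for the twisted operators.

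To set up these operators I would start from the symbolic model of Avila--Gou\"ezel--Yoccoz: choose a Veech zippered-rectangle section of $\C$, accelerate the Rauzy--Veech return map to obtain a uniformly expanding base transformation with an exponential-tail roof function, and work with the associated transfer operator on a suitable anisotropic Banach space, which already has a spectral gap in the untwisted case. Each elementary Rauzy--Veech move corresponds to an element of the mapping class group $\G$, whose image under $\Pi_{q}$ lies in $\Aut(H_{1}(S,\Z/q\Z))$; lifting the coding to $\C(q)$ realises the flow as a finite skew product, and decomposing $L^{2}(\Pi_{q}(\G))$ into isotypes of the regular representation splits the twisted transfer operator as a direct sum of the untwisted operator (on the trivial isotype, already controlled) and twisted operators $\L_{q,\rho}$, one for each non-trivial irreducible $\rho$ of $\Pi_{q}(\G)$. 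The theorem then reduces to a bound $\|\L_{q,\rho}^{n}\|\leq C\theta^{n}$ with $\theta<1$ and constants uniform in $q$ (coprime to some fixed $Q_{0}$) and in $\rho\neq\mathbf{1}$.

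Producing this uniform bound is the main obstacle, and the place where I expect almost all of the real work to be. The strategy combines super-approximation for the symplectic monodromy --- which, away from a finite set of bad primes, produces a uniform spectral gap for the Cayley graphs of $\Pi_{q}(\G)$ generated by any fixed finite set of Rauzy--Veech words --- with a Bourgain--Gamburd-style $\ell^{2}$-flattening estimate that upgrades this gap to an exponential contraction of $\L_{q,\rho}^{n}$ restricted to non-trivial isotypes, together with Dolgopyat-type oscillatory cancellation to control the imaginary part of the spectral parameter and open a uniform strip for the meromorphic continuation of the twisted resolvent past the critical line. The delicate point is that Rauzy--Veech lives on a countable alphabet with unbounded return time, whereas the flattening and super-approximation machinery are naturally formulated for random walks with compactly supported step distribution; reconciling the two will require truncating the alphabet, controlling the tail contributions via the exponential roof estimates in \cite{AG}, and then re-coupling the truncated random walk to the full operator without losing the expansion gain. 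Once the uniform spectral gap for $\{\L_{q,\rho}\}$ is in hand, \textbf{C} follows from the usual contour-shift argument for the Laplace transform of correlations of a suspension flow, and the reductions outlined in the first paragraph give \textbf{B} and then \textbf{A}.
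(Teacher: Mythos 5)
Your overall architecture matches the paper's: reduce Parts \textbf{A} and \textbf{B} to Part \textbf{C}, model the flow on $\C(q)$ as a skew suspension over the accelerated Rauzy--Veech base of Avila--Gou\"ezel--Yoccoz, encode the level-$q$ aspect in twisted transfer operators, and split the spectral estimate into a Dolgopyat high-frequency regime and a representation-theoretic low-frequency regime. However, your route through the low-frequency estimate differs from the paper's at each of the three places you flag as hard, and in one of them you have a real omission.

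First, the paper does not use super-approximation. It uses the Guti\'errez-Romo theorem (with the Avila--Matheus--Yoccoz hyperelliptic case) that the Rauzy--Veech group $G_\pi$ contains the level-$2$ congruence subgroup of $\Sp(\Z^{2g},\omega_\pi)$, hence has \emph{finite index}; this is far stronger than Zariski density and lets one bypass super-approximation entirely, since Kazhdan's property (T) for finite-index subgroups of $\Sp_{2g}(\Z)$, $g\geq2$, already supplies uniform expansion of the Cayley graphs of $\Gamma_q$ (Lemma 4.9 in the paper). Your super-approximation route is what one would have attempted before Guti\'errez-Romo's theorem (the earlier version of this manuscript was conditional on exactly that), and it should work, but it needs a deeper input and is more delicate at composite moduli. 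Second, the paper does not run a Bourgain--Gamburd $\ell^2$-flattening argument. It uses the Bourgain--Kontorovich--Magee ``black box'' formulation: after decoupling, the group-algebra element $\mu$ is majorized by an explicit iterated convolution of finitely-supported measures, the Cayley-graph gap controls $\|\tilde\mu*\mu\|_{\ell^2}$, and a Sarnak--Xue trace-formula argument converts this $\ell^2$ bound into an operator-norm bound on the new subspace. This conversion requires a \emph{quasirandomness} input --- a polynomial-in-$q$ lower bound on the dimension of every new irreducible representation of $\Sp_{2g}(\Z/q\Z)$, proved in Section 5 following Kelmer--Silberman --- which your outline never mentions. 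This is not cosmetic: without it you cannot pass from a small $\ell^2$-norm for $\tilde\mu*\mu$ to a small operator norm on non-trivial isotypes, and the $\ell^2$-flattening method needs exactly the same quasirandomness at its final stage, so you should name it as a required ingredient. Third, the infinite alphabet is not handled by truncating and re-coupling: the decoupling argument reorganizes the full countable sum into blocks, uses the exponential roof tails and the uniform contraction to peel off a product structure, and majorizes directly by an iterated convolution of $\ell^1$-finite measures. This is cleaner than truncation and avoids any loss from the re-coupling step you anticipate. Your identification of the infinite-alphabet issue as the crux is correct; the paper simply resolves it by a different mechanism.

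One small technical note: the paper works with $C^1$ observables on the expanding base (after reducing the hyperbolic skew product to its factor), not with an anisotropic Banach space; since the accelerated return map is uniformly expanding, anisotropy is unnecessary.
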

The corresponding theorem for $\C$, i.e. without any congruence aspect, was obtained by Avila, Gou\"{e}zel and Yoccoz in \cite{AGY}. In an earlier version of this manuscript, for certain types of components $\C$, Theorem \ref{main} was conditional on a conjecture of Zorich \cite{ZORICHLEAVES} that has since been proved by Guti\'{e}rrez-Romo \cite{GR}.

It is known that Parts {\bf{A.}}, {\bf{B.}}, and {\bf{C.}} of Theorem \ref{main} are equivalent. That Part  {\bf{B.}} implies Part {\bf{C.}}, namely, that one can use representation theory to deduce rates of mixing of the diagonal flow, is due to Ratner \cite{RATNER}. The argument that Part {\bf{C.}}
implies Part {\bf{B.}} is given by the `reverse Ratner
estimates' in \cite[Appendix B]{AGY}. 
The equivalence  between Parts {\bf{A.}} and {\bf{B.}} is due to the interpretation of the foliated Laplacian as a Casimir operator.
This is discussed in detail in \cite[Section 3.4]{AG}.

So it is sufficient to prove the dynamical statement of Part {\bf{C.}} This is made into a precise statement in Theorem \ref{thm:decay-of-correlations-precise.}.

We mention that in recent work \cite{GMR}, joint with R\"{u}hr and Guti\'{e}rrez-Romo, we extend Theorem \ref{main} to congruence covers coming from relative homology of $(S,\Sigma)$, and apply both Theorem \ref{main} and the extended result to the problem of counting saddle connections in a homology class modulo $q$.

\subsection{The ideas of the proof}

The reader is invited to read this section before the rest of the paper for the main ideas of the proof.

While we will prove  Theorem \ref{main} in dynamical terms, the philosophy of the proof goes back to works of Brooks \cite{BROOKS} and Burger \cite{BURGER1, BURGER2} that were originally stated in terms of the first non-zero eigenvalue $\lambda_1$. Both Brooks and Burger realized that if one has a Galois covering $Y \to X$ of Riemann surfaces, with deck transformation group $G$, then one can transfer bounds on the spectral gap of the Cayley graph of $G$ with respect to certain generators, to bounds on the first non-zero eigenvalue $\lambda_1(Y)$ of the Laplacian on $Y$. In particular, if $X$ is fixed, and $Y$ ranges over a family of Galois covers, if the associated Cayley graphs have a uniform spectral gap, then $\lambda_1(Y)$ is uniformly bounded below away from zero.

	The classic construction of an infinite family of graphs of bounded degree with a uniform spectral gap, known as an \emph{expander family}, is take a fixed generating set $U$ in an arithmetic lattice $G(\Z)$ that has Kazhdan's property (T), and then form the Cayley graphs for $G(\Z / q\Z)$ with respect to the projection of $U$ modulo $q$. This construction is due to Margulis \cite{MARGULIS}.
	
	Since the covering spaces $\C(q)$ of this paper have deck transformation groups contained in $\Sp( (H_1(S , \Z / q\Z),\cap) \cong \Sp_{2g}(\Z/ q\Z)$, and $\Sp_{2g}(\Z)$ has property (T) for $g\geq 2$, one might expect the Brooks-Burger philosophy to apply directly here, as long as one can prove that the deck transformation group is all of $\Sp_{2g}( \Z /q\Z)$, or in other words, $\C(q)$ is connected. However even if the issue of $\C(q)$ being connected is resolved\footnote{And this issue can be resolved as follows, however these arguments are not used in the paper.  The image $\Gamma_{\C}$ of the natural representation of the fundamental
group of $\C$ in $\Aut( H_1 (S ,\Z))$ is known to be a Zariski-dense
subgroup of $\Sp_{2g}(\Z)$ by a result of Filip from \cite[Corollary 1.3]{FILIPZERO}.
Then one has  the strong approximation result of Matthews, Vaserstein and Weisfeiler
\cite{MVW} that says if $\Gamma_{\C}$ is Zariski-dense in $\Sp_{2g}(\Z)$
then $\Gamma_{\C}$ maps onto $\Sp_{2g}(\Z/q\Z)$ for all $q$ coprime
to some fixed modulus $q_{0}$ and hence that $\C(q)$ is connected
for the same $q$.}, the Brooks-Burger philosophy does not obviously apply. The core issue is that the foliated Laplacian is not elliptic and only measures fluctuations of functions in the direction of $\SL_2(\R)$-leaves.

Instead we take a dynamical viewpoint. We think of functions on $\C(q)$ as sections of a $\Sp_{2g}(\Z /q\Z)$ principal bundle over $\C$. We know the dynamics on $\C$ is exponentially mixing by the work of Avila, Gou\"{e}zel, and Yoccoz \cite{AGY}. The key point for obtaining  uniform exponential mixing as in Theorem \ref{main}.{\bf C} is to exploit the following fact: when one travels along the Teichm\"{u}ller  flow and returns close to the initial point, we move in the fibre by a monodromy element of $\Sp_{2g}(\Z / q\Z )$. This monodromy is globally defined in the sense that for a given approximate loop, the monodromy at different levels $q$ are obtained by reduction mod $q$ of some element of $\Sp_{2g}(\Z)$. Moreover, if one can argue that the dynamics on the base $\C$ is sufficiently combinatorially complicated, then we can obtain many monodromy elements in this way. Then we hope to use property (T) to prove this dynamics in the fibre spreads out exponentially fast. So one has exponential mixing in the base, and some form of exponential mixing in the fibres, and hopes to combine these two. The problem is that the two processes are not independent. So we will use hyperbolicity of the dynamics on the base $\C$ to `decouple' these aspects of the dynamics. However, the base dynamics is not uniformly hyperbolic, so one needs to perform `time acceleration' as in \cite{AGY} to induce uniform hyperbolicity and then incorporate this into the method.

The previous paragraph was a high level overview of the approach. Now we give details of how this is implemented.

Our framework for understanding the dynamics of $\C$ is that same as Avila, Gou\"{e}zel, and Yoccoz in \cite{AGY}. Namely, instead of working with $\C$, we pass to a finite cover called the moduli space of zippered rectangles $\mathrm{Rect}_\C$ for $\C$. This finite cover carries a lift of the Teichm\"{u}ller flow that has some very nice properties that were worked out in \cite{AGY}. A key insight of \cite{AGY} is that by carefully chosing a cross section,  one obtains a model of the flow on $\mathrm{Rect}_\C$ as a suspension flow over a hyperbolic skew product $\hat{Z} :\hat{\Xi} \to \hat{\Xi}$ with a base transformation $Z : \Xi \to \Xi$ that is a uniformly expanding Markoff map (Lemmas \ref{lem:-is-a-hyperbolic-skew-product} and Proposition \ref{prop:The-map-is uniformly epxanding markov}). Moreover the roof function for this suspension model has desirable properties, it is `good' in the sense of \cite{AGY} (Lemma \ref{lem:The-roof-function-is-good}) and it has exponential tails (Theorem \ref{thm:The-roof-function-has-exponential-tails}). The latter statement is quite hard and relies on exponential recurrence estimates for the Teichm\"{u}ller flow that were first obtained by Athreya \cite{ATHREYA}.

This suspension model has another key property that is not explicitly used in \cite{AGY}: the symbolic coding is very well adapted to keeping track of what happens to the homology of the surface when we follow the flow. Indeed, there is a linear group $G$ attached to $\C$ called the \emph{Rauzy-Veech group} that is defined purely in terms of the symbolic dynamics of $\mathrm{Rect}_\C$ and the return maps on the base of the suspension model. This group $G$ performs the desired function of keeping track of monodromy in homology around approximate loops and is defined precisely in Section \ref{sub:The-Rauzy-Veech-group}. It was a conjecture of Zorich \cite{ZORICHLEAVES} that $G$ is Zariski-dense in its ambient symplectic group. Recently, it has been proven in works of Avila, Matheus and Yoccoz \cite{AMY}, and  Guti\'{e}rrez-Romo \cite{GR}, that the Rauzy-Veech group is finite index in $\Sp_{2g}(\Z)$. Therefore, in particular, it has property (T). The precise statement about the Rauzy-Veech group that we use is given in Theorem \ref{thm:zorich-conjecture}.

Other than discussing the Rauzy-Veech group, the main purpose of Section \ref{sec:back} is to go through the setup of \cite{AGY} and explain how to keep track of what happens to homology along the flow, as well as stating the results we need from \cite{AGY}.

In Section \ref{sec:decay}, we follow the strategy of Avila, Gou\"{e}zel, and Yoccoz of reducing Theorem \ref{thm:decay-of-correlations-precise.}, the precise formulation of our main theorem, to exponential mixing of the flow on $\mathrm{Rect}_\C$ (Theorem \ref{thm:full-C^1-decay-of-correlations}), and then to exponential mixing of a suspension flow over the base $\Xi$ of the hyperbolic skew product (Theorem \ref{thm:Decay of correlations roof over Markov}). These statements must now be uniform in $q$.

A well known technique for proving exponential mixing of suspension flows is to take a Laplace transform of the correlation function, and express this transform in terms of iterates of \emph{transfer operators}. To deal with the $q$ aspect, one uses \emph{skew transfer operators}, one operator for each $q$.  The transfer operators act on vector valued $C^1$ functions on $\Xi$ and one needs spectral estimates for the transfer operators that are uniform in $q$.
This strategy of proving uniform exponential mixing via $q$-uniform bounds on transfer operators originates in work of Oh and Winter 
\cite{OW}. One needs estimates for the transfer operators in two regimes: high frequency (given by Proposition \ref{prop:contraction-transfer-warped})
 and low frequency (given by Proposition \ref{prop:Contraction-congruence-transfer}). 
 
The technique for carrying out the necessary high frequency estimates are due to Dolgopyat \cite{DOLG} and extended to the current setting, with no $q$-aspect, by Avila, Gou\"{e}zel, and Yoccoz \cite{AGY}. The use of the Dolgopyat argument to establish $q$-uniform versions of the high frequency estimates was first done by Oh and Winter \cite{OW}, and then in a different setting by Magee, Oh, and Winter \cite{MOW}. In Section \ref{sec:dolg} we explain how to extend the arguments of Avila, Gou\"{e}zel, and Yoccoz in this regime to skew transfer operators.

The technique for proving $q$-uniform low frequency estimates for skew transfer operators goes back to the work of Bourgain, Gamburd, and Sarnak\footnote{Bourgain, Gamburd, and Sarnak were interested in spectral bounds for transfer operators for reasons that are related to exponential mixing but in \cite{BGS2} phrased in terms of counting problems.}
 \cite{BGS2}. The philosophy here, mirroring the Brooks-Burger philosophy, is that an iterate of the transfer operator looks somewhat like an iterate of the adjacency operator of a Cayley graph of $\Sp_{2g}(\Z /q\Z)$.
In work of Bourgain, Kontorovich, and Magee \cite[Appendix]{MOW}, an improvement was made to this method that allows one to use uniform expansion of Cayley graphs (in the current setting, furnished by property (T)) as a `black box'\footnote{The original argument of Bourgain, Gamburd, and Sarnak in \cite{BGS2} involved unraveling the proof that the associated Cayley graphs are uniform expanders.} to prove $q$-uniform estimates for transfer operators. 

We give the details of how this method can be extended to the current setting in Section \ref{sec:Expansion-and-the-twisted-transfer-operator}. It requires not only the uniform expansion of certain Cayley graphs as an input, but also an extra input that the dimensions of representations of $\Sp_{2g}(\Z / q\Z)$ that do not arise from representations of $\Sp_{2g}(\Z / q' \Z )$ with $q'|q$ have a lower bound that is polynomial in $q$. This is a version of \emph{quasirandomness}\footnote{ Gowers \cite[Theorem 4.5]{GOWERS} made the definition
that a finite group $G$ should be regarded as quasirandom relative
to an ambient parameter $C$ if the dimension of any nontrivial irreducible
representation of $G$ has dimension $\geq C$. Prior to this formal
notion, the concept had been used in the construction of Ramanujan
graphs by Lubotzky, Phillips and Sarnak \cite{LPS}, the work of Sarnak
and Xue on multiplicities of automorphic representations \cite{SX},
and the construction of uniformly expanding Cayley graphs of $\SL_{2}(\mathbb{F}_{p})$
by Bourgain and Gamburd \cite{BGEXPAND}. } for $\Sp( \Z / q\Z)$. The reason for needing this kind of bound is that is allows us to obtain information on the spectral radius of a complex-valued measure $\mu$ on $\Gamma_q = \Sp_{2g}(\Z / q\Z)$ acting by convolution on a certain subspace of $\ell^2(\Gamma_q)$ if we have information on the spectral radius of a real-valued measure $\mu'$ that majorizes $|\mu|$. This is a key idea in Section \ref{sec:Expansion-and-the-twisted-transfer-operator}. We state the precise quasirandomness estimate we need in Proposition \ref{prop:quasirandomness} and then prove it following an argument of Kelmer and Silberman \cite{KS}.

\subsection{Acknowledgements}

We would like to thank Alex Eskin, Ilya Gekhtman, S\'{e}bastien Gou\"{e}zel, Rodolfo Guti\'{e}rrez-Romo, Carlos Matheus, Babak Modami, Hee Oh and Giulio Tiozzo for
helpful discussions related to this work.

\section{Background}\label{sec:back}

\subsection{Abelian differentials and translation surfaces\label{sub:Abelian-differentials-and}}

Let $g\geq1$ and let $S=S_{g}$ be a fixed topological surface of
genus $g$. Let $\Sigma = \{A_1 , \ldots , A_s \}$ be a finite subset of $S$. An \emph{abelian differential }on $(S,\Sigma)$ is a pair $(\J,\omega)$
where $\J$ is a complex structure on $S$ and $\omega$ is a holomorphic
one form with respect to $\J$, and with zeros contained in $\Sigma$. 
As is well known, an abelian
differential $\omega$ on $(S,\Sigma)$  gives $S$ the
structure of a  translation surface with conical singularities in $\Sigma$; the complex structure comes from integrating the differential. Hence we may speak about the area of an abelian differential as the area of the corresponding translation surface.

One may further specifiy that the abelian differential has a zero of order $\kappa_i\in \Z_+$ at $A_i$. This is possible whenever $\sum\k_{i}=2g-2$.
For such $\kappa=(\kappa_{1},\ldots,\kappa_{s})$
we let $\X(\kappa)$ denote the collection of abelian differentials on $(S,\Sigma)$
with  zeros of orders $\kappa_{1},\ldots,\kappa_{s}$ at $A_1 ,\ldots, A_s$, up to isotopies of $S$ preserving $\Sigma$.
This Teichm\"{u}ller space has a natural affine manifold structure arising through period coordinates as described in \cite[Section 2.2.1]{AGY}. Let $\X^{(1)}(\k)\subset \X(\k)$ be the abelian differentials whose corresponding translation surface has unit area, up to isotopy. Then $\X^{(1)}(\k)$ is an affine submanifold of $\X(\k)$.

The modular group $\Gamma = \Gamma(S,\Sigma)$ is defined to be the homeomorphisms of $S$ that fix $\Sigma$ pointwise, modulo homeomorphisms that are isotopic to the identity relative to $\Sigma$. Thus $\Gamma$ acts on $\X(\kappa)$, preserving $\X^{(1)}(\k)$, and we define
$\H(\kappa)$ to be $\Gamma \backslash \X(\k)$ and $\H^{(1)}(\k) = \Gamma \backslash \X^{(1)}(\k)$.
This $\H^{(1)}(\k)$ is often referred to as a \emph{stratum} of the moduli space of unit area abelian differentials. The connected components of these
strata have been classified by Kontsevich and Zorich \cite{KZ}. Throughout the paper we write $\C$ for a connected component of $\H^{(1)}(\kappa)$.

Any connected component $\C$ of $\H^{(1)}(\k)$ inherits, from the manifold structure of $\X^{(1)}(\kappa)$, the structure
of an affine orbifold. We define $\H^{(1)}(\kappa; q) = \Gamma_g(q) \backslash \X^{(1)}(\kappa)$ where $\Gamma_g(q)$ is the kernel of $\Pi_q$ defined in \eqref{eq:reduction}. We thus have a covering map $\H^{(1)}(\kappa; q)\to \H^{(1)}(\kappa)$. We define $\C(q)$ to be the preimage of $\C$ under this map. For each $q$ the lift of $\C(q)$
to $\X(\kappa)$ is a submanifold. 

Recall that a Finsler manifold is a smooth manifold together with
a continuous assignment of norm on each tangent fibre. The norm is
called a Finsler metric. As described in \cite[Section 2.2.2]{AGY}
there is a $\Gamma$-invariant Finsler metric on $\X(\kappa)$ arising from period coordinates making $\X(\kappa)$ into a Finsler manifold. This induces a Finsler manifold structure on $\X^{(1)}(\k)$.

\subsection{The Hodge bundle\label{sub:The-Hodge-bundle}}

The Hodge bundle is defined
to be the fibred product 
\[
H_{1}(\H^{(1)}(\kappa)):=\Gamma\backslash(\X^{(1)}(\kappa) \times H_{1}(S,\R))\to\Gamma\backslash\X^{(1)}(\k)= \H^{(1)}(\k)
\]
where the mapping class group $\Gamma$ acts diagonally. Let $\H^{(1)}(\kappa)^{0}$
be the complement of the orbifold points in $\H^{(1)}(\k)$. The Hodge
bundle restricts to a vector bundle $H_{1}(\H^{(1)}(\k)^{0})$ over $\H^{(1)}(\k)^{0}$.
At any orbifold point $[(\J,\omega)]$ of $\H^{(1)}(\k)$ the fibre degenerates
to $\Aut(\J,\omega)\backslash H_{1}(S,\R)$. Note that by Hurwitz's
automorphisms theorem, $\Aut(\J,\omega)$ is a finite group.

The total space of the Hodge bundle contains as a discrete subset
the lattice bundle
\[
\Gamma\backslash(\X^{(1)}(\k)\times H_{1}(S,\Z)).
\]
Then one may specify the \emph{Gauss-Manin} connection on the Hodge
bundle by the requirement that lattice valued continuous sections
be parallel. This gives a flat vector bundle connection on $H_{1}(\H^{(1)}(\k)^{0})$
that extends to a flat connection on $H_{1}(\H^{(1)}(\k))$ in the following
sense. A section of $H_{1}(\H^{(1)}(\k))$ can be viewed as a function
$\sigma:\X^{(1)}(\k)\to H_{1}(S,\R)$ that transforms according to
\[
\sigma(\gamma.x)=\gamma_{*}\sigma(x),\quad\gamma\in\Gamma.
\]
Then a local section is parallel by definition if it takes values
in $H_{1}(S,\Z)$ and this specifies the connection on general sections.

The action of $\Gamma$ on $H_{1}(S,\Z)$ lies in the integral symplectic
group $\Sp(H_{1}(S,\Z),\cap)$ where $\cap$ is the (symplectic) intersection
form on integral homology. Therefore for any unitary representation
$(\rho,V)$ of $\Sp(H_{1}(S,\Z),\cap)$ we obtain an \emph{associated
orbifold vector bundle}\footnote{By \emph{orbifold vector bundle} we mean that the fibres are vector
spaces of constant rank away from the orbifold points of the base
space, where the fibres degenerate only to a quotient of a vector
space by a finite group.}\emph{ }
 $H_{1}(\H^{(1)}(\k);\rho).$
The total space of this bundle is
\begin{equation}
H_{1}(\H^{(1)}(\k);\rho)=\Gamma\backslash(\X^{(1)}(\k)\times V)\label{eq:H1(stratum,rho)}
\end{equation}
where the action of $\Gamma$ on $\X^{(1)}(\k) \times V$ is given by $\gamma.(\omega,v)=(\gamma.\omega,\rho(\gamma_{*}).v)$,
where $\gamma_{*}\in\Sp(H_{1}(S,\Z),\cap)$ is the map induced by
$\gamma$ on homology. This bundle also has a flat connection, in
the same sense as before, coming from the fibred product structure
in (\ref{eq:H1(stratum,rho)}).

Of course, for any connected component $\C$ of the stratum $\H^{(1)}(\k)$
we may restrict $H_{1}(\H^{(1)}(\k))$ or $H_{1}(\H^{(1)}(\k);\rho)$ to $\C$. We denote by
$H_{1}(\C;\rho)$ the obtained orbifold
vector bundle. 

For a lot of the rest of the paper we deal with abstract unitary $\rho$
but in reality we are interested in the following specific examples.
Recall the map  $\Pi_{q}$ from \eqref{eq:reduction}. Because the symplectic intersection product $\cap$ on $H_1( S, \Z/q\Z)$ is preserved by the mapping class group, we have
$$
\Pi_q : \Gamma \to \Sp(H_{1}(S;\Z/q\Z),\cap).
$$
We let $\Gamma_q = \Sp(H_{1}(S;\Z/q\Z),\cap)$. Let $\ell_{0}^{2}(\Gamma_{q})$ be the subspace of functions in $\ell^{2}(\Gamma_{q})$
that are orthogonal to constant functions with respect to the $\ell^{2}$
inner product. This gives a subrepresentation
$(\rho_{q},\ell_{0}^{2}(\Gamma_{q}))$ of the action of $\Gamma$
on $\ell^{2}(\Gamma_{q})$ by reduction mod $q$ and then left translation\footnote{In other words, the inflation of the left regular representation of
$\Gamma_{q}$ to $\Gamma$.}. 

We will also consider the subspace of $\ell_{0}^{2}(\Gamma_{q})$
consisting of functions that are orthogonal to all functions lifted
from $\Gamma_{q'}$ with $q'|q$ via the natural mapping of reduction
modulo $q'$ 
\[
\Gamma_{q}\to\Gamma_{q'}.
\]
We denote by $\ell_{\new}^{2}(\Gamma_{q})$ this \emph{new subspace}
of functions. This gives a subrepresentation\\
 $(\rho_{q}^{\new},\ell_{\new}^{2}(\Gamma_{q}))$
of $(\rho_{q},\ell_{0}^{2}(\Gamma_{q}))$.

\subsection{The Teichmüller flow on moduli space \label{sub:The-Teichmuller-flow}}

There is a postcomposition action of $\SL_{2}(\R)$ on the space of
abelian differentials on $S$ as follows. For $h\in\SL_{2}(\R)$ we
define
\[
h.(\J,\omega)=(\J_{h},\omega_{h})
\]
 where
\[
\omega_{h}=h\left(\begin{array}{c}
\Re(w)\\
\Im(w)
\end{array}\right)
\]
and $\J_{h}$ is the unique complex structure on $S$ that makes $\omega_{h}$
holomorphic. This action preserves the area of abelian differentials.
As this action also commutes with any homeomorphism of $S,$
it descends to both the Teichmüller spaces $\X(\k)$, $\X^{(1)}(\k)$ and $\H(\k)$, $\H^{(1)}(\k)$ and $\C$. The \emph{Teichmüller geodesic flow} on any of these
objects is the restriction of the $\SL_{2}(\R)$ action to the diagonal
subgroup:
\[
\T_{t}(\J,\omega):=\left(\begin{array}{cc}
e^{t} & 0\\
0 & e^{-t}
\end{array}\right).(\J,\omega).
\]
The Teichmüller flow also preserves each connected component $\C$. 
By results of Masur \cite{MASUR} and Veech \cite{VEECHGAUSS} there
is a unique probability measure
$\nu_{\C}$ that is invariant and ergodic for the Teichmüller
flow on $\C$. This measure is in the Lebesgue class with respect
to period coordinates on $\C$. We pull back the measure $\nu_{\C}$ on $\C$, using the counting measure on the fibres of the covering map, to obtain a measure $
\nu_{\C(q)}$ on $\C(q)$. Note that $\nu_{\C(q)}$ is not a probability measure.

Since in Section \ref{sub:The-Hodge-bundle} we specified a connection
on each of $H_{1}(\H^{(1)}(\k))$, $H_{1}(\H^{(1)}(\k);\rho)$ the Teichmüller
flow acts on sections of each of these bundles by pullback along parallel
transport. For example, viewing a section of $H_{1}(\H^{(1)}(\k);\rho)$
as a $V$-valued function $\sigma$ on $\X^{(1)}(\k)$ satisfying $\sigma(\gamma.x)=\rho(\gamma)\sigma(x)$
for each $\gamma\in\Gamma$, we have the following defining equation
for $\T_{t}^{*}$:
\begin{equation}
[\T_{t}^{*}\sigma](\J,\omega):=\sigma(\T_{t}(\J,\omega)).\label{eq:pullback-sections}
\end{equation}
This action also restricts to an action on sections of $H_{1}(\C;\rho)$. 

We now explain the relationship between
sections of $H_{1}(\C;\rho_{q})$ and functions on $\C(q)$.
Let $L_{
\star}^{2}(\C(q))$ be the subspace of functions
in $L^{2}(\C(q))$ orthogonal to lifts from $L^{2}(\C)$,
w.r.t. the measure $\nu_{\C(q)}$. Let $L^{2}(H_{1}(\C;\rho_{q}))$
denote the $L^{2}$ sections of $H_{1}(\C;\rho_{q})$ w.r.t.
the natural Hermitian fibre metric and measure $\nu_{\C}$.
We say that a function $f$ on $\C(q)$ or a section $\sigma$
of $H_{1}(\C;\rho)$ is $C^{1}$ if its lift to $\tilde{f}:\X^{(1)}(\k)\to\mathbf{C}$
(resp. $\tilde{\sigma}:\X^{(1)}(\k)\to V$) is $C^{1}$ (bounded
with bounded derivative\footnote{\label{foot}In case of $V$-valued $F$ on a Finsler manifold $X$ with $V$ a
Hilbert space, to define the norm of the derivative we view the derivative
at $x\in X$ as a map $DF_{x}:T_{x}X\to T_{F(x)}V\cong V$ then use
the operator norm w.r.t. the Finsler metric at $x$ and the Hilbert
space norm on $V$.}) w.r.t. the the Finsler manifold structure on $\X^{(1)}(\k)$. Define
$\|f\|_{C^{1}}=\|\tilde{f}\|_{\infty}+\|D\tilde{f}\|_{\infty}$ and
similarly $\|\sigma\|_{C^{1}}$. Write $C^{1}(\C(q))$ for the
$C^{1}$ complex valued functions on $\C(q)$ and $C^{1}(H_{1}(\C;\rho))$
for the $C^{1}$ sections of $H_{1}(\C;\rho)$. These are Banach
spaces w.r.t the respective $C^{1}$ norms. 
\begin{lem}
\label{lem:We-have-the-following-correspondences}We have the following
correspondences
\begin{enumerate}
\item For each $q$ there is a natural linear isometry 
\[
\Phi_{q}:L_{\star}^{2}(\C(q))\to L^{2}(H_{1}(\C;\rho_{q})).
\]
\item The map $\Phi_{q}$ intertwines the maps $\T_{t}^{*}$ defined by
pullback on $L_{\star}^{2}(\C(q))$ and by (\ref{eq:pullback-sections})
on $L^{2}(H_{1}(\C;\rho_{q}))$.
\item The restriction 
\[
\Phi_{q}:C^{1}(\C(q))\cap L_{\star}^{2}(\C(q))\to C^{1}(H_{1}(\C;\rho_{q}))
\]
preserves $C^{1}$ norms.
\end{enumerate}
\end{lem}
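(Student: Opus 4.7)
The plan is to realise $\Phi_q$ as the tautological identification between functions on a Galois-type cover and sections of an associated vector bundle, then verify each of the three clauses in turn. Concretely, a section of $H_{1}(\C;\rho_{q})$ is a $\Gamma$-equivariant map $\tilde{\sigma}:\X^{(1)}(\kappa)\to\ell^{2}(\Gamma_{q})$ (equivariant w.r.t.\ $\rho_q$), while $f\in L^{2}(\C(q))$ corresponds to a $\Gamma_g(q)$-invariant $\tilde f:\X^{(1)}(\kappa)\to\mathbf{C}$. I define
\[
[\Phi_{q}(f)](x)(g)=\tilde{f}(\gamma^{-1}x)\quad\text{when }g=\Pi_{q}(\gamma)\in\Pi_{q}(\Gamma),
\]
and zero otherwise. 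Well-definedness is immediate from $\Gamma_g(q)$-invariance of $\tilde f$, and a short computation matching the left regular representation pre-composed with $\Pi_q$ gives $\tilde\sigma(\delta x)=\rho_q(\delta)\tilde\sigma(x)$: with $\gamma'=\delta^{-1}\gamma$ one has $\Pi_q(\gamma')=\Pi_q(\delta)^{-1}g$ and $\tilde f(\gamma'^{-1}x)=\tilde f(\gamma^{-1}\delta x)$. The inverse is $\Phi_q^{-1}(\tilde\sigma)(x)=\tilde\sigma(x)(e)$, which is $\Gamma_g(q)$-invariant because $\Pi_q(\Gamma_g(q))=\{e\}$.

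For clause (1), first observe that $L_\star^2(\C(q))$ corresponds under $\Phi_q$ to sections taking values in $\ell_0^2(\Gamma_q)$: the orthogonality condition $\int_{\C(q)} f\cdot (\pi_q^\ast h)\,d\nu_{\C(q)}=0$ for every $h\in L^2(\C)$ unfolds (via the fact that $\nu_{\C(q)}$ is the fibrewise counting measure on $\nu_\C$) to the pointwise identity $\sum_{g\in\Gamma_q}\tilde\sigma(x)(g)=0$. The same unfolding gives the $L^2$ isometry,
\[
\|\Phi_q(f)\|_{L^2}^2=\int_\C \|\tilde\sigma(x)\|_{\ell^2}^2\,d\nu_\C=\int_\C\sum_{y\in\pi_q^{-1}(\bar x)}|f(y)|^2\,d\nu_\C(\bar x)=\|f\|_{L^2(\C(q))}^2.
\]
Clause (2) is then essentially formal: both instances of $\T_t^\ast$ are realised on $\X^{(1)}(\kappa)$ by pullback along $\T_t$, and since $\T_t$ commutes with the $\Gamma$-action the relation $[\Phi_q(\T_t^\ast f)](x)(g)=\tilde f(\gamma^{-1}\T_t x)=[\T_t^\ast\Phi_q(f)](x)(g)$ drops out of the definition.

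Clause (3) reduces, via the pointwise formula $\tilde\sigma(x)(g)=\tilde f(\gamma_g^{-1}x)$, to comparing the relevant sup-norms on $\X^{(1)}(\kappa)$. Since the $\Gamma$-action is isometric for the Finsler structure, the differential $D\tilde\sigma_x$ is determined by a packaging of $D\tilde f_{\gamma_g^{-1}x}\circ(\gamma_g^{-1})_\ast$, and the supremum over $(x,g)$ recovers the supremum of $\tilde f$ and $D\tilde f$ over all of $\X^{(1)}(\kappa)$, identifying $\|\tilde\sigma\|_{C^1}$ with $\|\tilde f\|_{C^1}$. The main technical obstacle is the bookkeeping at orbifold points of $\C$, where fibres degenerate to quotients by finite stabilisers acting via $\rho_q$; but such points form a $\nu_\C$-null set so they are invisible to the $L^2$ identification, and the $C^1$ comparison extends to them by restricting to the stabiliser-invariant subspace of $\ell^2(\Gamma_q)$.
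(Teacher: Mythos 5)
The construction of $\Phi_q$ you give (sending $\tilde f$ to the map $x\mapsto\bigl(g\mapsto\tilde f(\gamma^{-1}x)\bigr)$ with $\Pi_q(\gamma)=g$) is the natural one, and your verification of well-definedness, equivariance, the unfolding argument for $L^2_\star\leftrightarrow\ell^2_0$, and the intertwining with $\T_t^*$ are all correct; clauses (1) and (2) are fine. A minor improvement would be to note that the case ``zero otherwise'' never arises, since $\Gamma$ surjects onto $\Sp(H_1(S;\Z),\cap)$ and hence onto $\Gamma_q$, so $\Pi_q(\Gamma)=\Gamma_q$.

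However, there is a genuine gap in your treatment of clause (3). You assert that ``the supremum over $(x,g)$ recovers the supremum of $\tilde f$ and $D\tilde f$, identifying $\|\tilde\sigma\|_{C^1}$ with $\|\tilde f\|_{C^1}$.'' But the $C^1$ norm of a $V$-valued section is taken with respect to the Hilbert-space norm on the fibre $V=\ell_0^2(\Gamma_q)$ (see the paper's footnote defining $C^1$ for $V$-valued functions). Thus
\[
\|\tilde\sigma\|_\infty=\sup_{x}\Bigl(\sum_{g\in\Gamma_q}|\tilde f(\gamma_g^{-1}x)|^2\Bigr)^{1/2},
\]
which is not $\sup_{x,g}|\tilde f(\gamma_g^{-1}x)|=\|\tilde f\|_\infty$. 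What you have implicitly used is the $\ell^\infty$-norm over the fibre, not the $\ell^2$-norm. One only obtains the two-sided bound $\|\tilde f\|_\infty\le\|\tilde\sigma\|_\infty\le|\Gamma_q|^{1/2}\|\tilde f\|_\infty$, and similarly for the derivative term, so $\Phi_q$ does not preserve $C^1$ norms exactly but only up to a factor of $|\Gamma_q|^{1/2}$. (This is harmless downstream, since Theorem~\ref{thm:full-C^1-decay-of-correlations} already tolerates a $q^\eta$ loss, but it is not what clause (3) literally says, and a careful proof should either note that the equality is only a comparability up to $q^{O(1)}$ or remark explicitly on the discrepancy.) The exact $L^2$ identity in clause (1) works precisely because the $\ell^2$-norm of the fibre vector matches the fibrewise counting measure; the sup-norm in $C^1$ does not enjoy that coincidence. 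Your appeal to orbifold points in this context is also a bit of a red herring: the $C^1$ norm is computed on the lift to $\X^{(1)}(\kappa)$, which is a manifold, so the only role of the orbifold points is that the lifted section is constrained to lie in the stabiliser-invariant subspace there, which causes no difficulty.
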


\subsection{Combinatorial data and Rauzy classes}

Now we begin an account of the dynamics of the Teichmüller flow, viewed
through the lens of Veech's zippered rectangles construction. We draw
in the following sections from the sources \cite{AGY}, \cite{VIANA}
that both build on work of Marmi, Moussa and Yoccoz \cite{MMY}.

The relevant combinatorial objects are as follows\emph{. }Let 
$\A$
denote a finite alphabet with
 $|\A|=d$. Eventually,
$\A$ will be chosen depending on $g,\k$ and the component $\C$.
We let
$\S(\A)$ denote the set
of pairs 
\[
(\pi_{t},\pi_{b})
\]
where each $\pi_{\epsilon}:\A\to\{1,\ldots,d\}.$ Henceforth, $\epsilon$
will index one of the symbols $t,b$ (`top' or `bottom'). As in \cite{AGY}
it is convenient to visualize $(\pi_{t},\pi_{b})$ as a pair of rows
each of which contains the elements of $\A$ in some order, where
the top corresponds to $\pi_{t}$ and the bottom to $\pi_{b}$. We
say $(\pi_{t},\pi_{b})$ is \emph{irreducible }if there is no $d'<d$
such that the set of the first $d'$ elements of the top row is the
same as the first $d'$ elements of the bottom. Let $\S^{0}(\A)\subset\S(\A)$
denote the irreducible combinatorial data.

We now define `top' and `bottom' operations on
$\S^{0}(\A)$.
For the next paragraph, let $\alpha$ and $\beta$ denote the last
elements of the top and bottom rows of $\pi\in\S^{0}(\A)$ respectively.
The top operation on $\pi$ modifies the bottom row by moving the
occurrence of $\beta$ to the immediate right of the occurrence of
$\alpha$. The bottom operation modifies the top row by moving $\alpha$
to the right of $\beta$. As in \cite{AGY} we say that the last element
of the unchanged row is the \emph{winner }and the last element of
the row of $\pi$ that is to be changed the \emph{loser. }

By adding directed `top' and `bottom' labelled edges according to
these operations we obtain an edge-labeled directed graph on the vertex
set of irreducible combinatorial data $\S^{0}(\A)$. Each vertex has
exactly one incoming top (resp. bottom) and one outgoing top (resp.
bottom) edge. A \emph{Rauzy diagram }is a connected component of this
graph and a \emph{Rauzy class }is the vertex set of a\emph{ }Rauzy
diagram\emph{. }

\subsection{Suspension data and zippered rectangles}

Let
be a Rauzy class. For each $\pi\in\RR$
we form a cell 
\[
X_{\pi}=\{\pi\}\times\R_{+}^{\A}\times\K_{\pi}
\]
where 
\[
\K_{\pi}=\left\{ \tau\in\R^{\A}\::\:\sum_{\pi_{t}(\xi)\leq k}\tau_{\xi}>0,\:\sum_{\pi_{b}(\xi)\leq k}\tau_{\xi}<0\:\text{for all }1\leq k\leq d-1.\right\} 
\]
The set $\K_{\pi}$ is an open convex cone. Let $X_{\RR}=\cup_{\pi\in\RR}X_{\pi}.$
We may drop the dependence on $\RR$ since we usually view it as fixed.
We associate to each $\pi\in\RR$ a linear map
$\Omega_{\pi}:\R^{\A}\to\R^{\A}$ given by
\[
[\Omega_{\pi}]_{\alpha,\beta}=\begin{cases}
+1 & \text{if }\pi_{t}(\alpha)>\pi_{t}(\beta),\:\pi_{b}(\alpha)<\pi_{b}(\beta),\\
-1 & \text{if }\pi_{t}(\alpha)<\pi_{t}(\beta),\:\pi_{b}(\alpha)>\pi_{b}(\beta),\\
0 & \text{else.}
\end{cases}
\]

There is a construction due to Veech \cite{VEECHGAUSS} that builds
a point in the moduli space of translation surfaces from suspension data. This mapping is called the \emph{zippered
rectangles }construction that we denote by
\[
\zip:X_{\RR}\to\H(\k),\quad\k=\k(\RR).
\]
The explicit details of this construction are clearly described in
lecture notes of Viana \cite[Chapter 2]{VIANA}. In the current paper
it will be better to simply work with the properties of the map $\zip$
that we give below. Henceforth a superscript $^{(1)}$ on any set of suspension data refers to the subset whose associated zippered rectangles have unit area: for
example 
 $X_{\RR}^{(1)}, X_\pi^{(1)}$ etc.
\begin{thm}[Veech \cite{VEECHGAUSS}]
\label{thm:veech}For any connected component $\C$ of the stratum
$\H^{(1)}(\k)$ there is a Rauzy class $\RR=\RR(\C)$ such that $\zip(X^{(1)}_{\RR})\subset\C$
and $\zip(X^{(1)}_{\RR})$ has full measure w.r.t $\nu_{\C}$.
\end{thm}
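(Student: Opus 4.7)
The plan is to prove Theorem \ref{thm:veech} by constructing a measure-theoretic inverse to $\zip$ defined on a full-measure subset of $\C$, and then reading off the requisite Rauzy class from the combinatorics of this inverse. Concretely, for a generic translation surface $(\J,\omega)\in\C$, I will pick a horizontal transversal $I\subset S$ based at a zero of $\omega$, consider the first-return map $T_I$ of the vertical flow, and extract from $T_I$ both the combinatorial datum $\pi=(\pi_t,\pi_b)\in\S^{0}(\A)$ and the suspension data $(\lambda,\tau)\in\R^{\A}_{+}\times\K_{\pi}$ that recover $(\J,\omega)$ under $\zip$.

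First, I would fix the appropriate alphabet $\A$ (its cardinality determined by $g$ and $\k$) and define the full-measure subset $\C^{*}\subset\C$ consisting of those $(\J,\omega)$ such that the vertical flow has no saddle connections passing through the chosen endpoint of $I$; this is a countable union of codimension-one real analytic conditions and hence has measure zero complement w.r.t.\ $\nu_{\C}$. On $\C^{*}$, the return map $T_I$ is a genuine interval exchange transformation on $d=|\A|$ subintervals: the labels of these subintervals, read off in the original order and in the return-image order, give $\pi_t$ and $\pi_b$. The widths of the subintervals furnish $\lambda\in\R^{\A}_{+}$; the heights of the rectangles in Veech's zippered decomposition furnish $\tau$, and the inequality conditions defining $\K_\pi$ are exactly the requirement that the rectangles can be zipped consistently to reconstruct $(\J,\omega)$. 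This gives a map $\C^{*}\to X_{\S^{0}(\A)}$ which, by construction of $\zip$ (as in \cite{VEECHGAUSS}, \cite[Chapter 2]{VIANA}), is a right inverse, so $\zip(X_{\S^{0}(\A)})\supset\C^{*}$.

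The next step is to pin down which Rauzy class occurs. Here the key observation is that continuously deforming the transversal $I$, or continuously moving within $\C^{*}$, changes the combinatorial datum $\pi$ only through the top/bottom operations already encoded in the Rauzy graph (the elementary moves correspond to sliding the endpoint of $I$ past a return point, and to the choice of longer vs. shorter side at the last step). Consequently the set $\{\pi\in\S^{0}(\A):\pi \text{ arises from some }(\J,\omega)\in\C^{*}\}$ is invariant under top and bottom operations, i.e.\ is a union of Rauzy classes. Since $\C$ is connected and the map $(\J,\omega)\mapsto\pi$ is locally constant off the codimension-one walls on which a Rauzy move is performed, this set is a single Rauzy class $\RR=\RR(\C)$; then $\zip(X_{\RR})\supset\C^{*}$, and by definition of $\RR$ via $\C$ we also have $\zip(X_{\RR})\subset\C$, giving the desired full-measure statement.

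I expect the main technical obstacle to be the verification that the Rauzy class is well-defined on $\C$, i.e.\ that different generic choices of transversal in the same connected component genuinely lead to $\pi$'s in the same Rauzy class. This requires carefully checking that every homotopy of transversals can be decomposed into a sequence of elementary Rauzy moves, and that global moves such as reversing orientation or moving the base point between distinct zeros of $\omega$ do not push one outside of a single Rauzy component. The measure-theoretic statements, by contrast, are comparatively routine once the full-measure locus $\C^{*}$ is identified, since the failure locus is a countable union of real-analytic subvarieties of strictly smaller dimension in the period-coordinate affine structure on $\X^{(1)}(\k)$.
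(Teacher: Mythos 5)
The paper does not prove Theorem~\ref{thm:veech}: it simply cites Veech's original work \cite{VEECHGAUSS}, so there is no internal proof to compare against. On its merits, your proposal follows the right general outline — Veech's argument does proceed by exhibiting, for almost every surface in $\C$, suspension data $(\pi,\lambda,\tau)$ recovering the surface under $\zip$ via the first-return map of the vertical flow to a horizontal transversal, and the genericity condition is indeed a countable union of real-analytic codimension-one loci (absence of vertical saddle connections). But the step where you claim the achievable combinatorial data form \emph{a single Rauzy class} is the heart of the theorem and your justification of it is incomplete in two ways.

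First, the correspondence $(\J,\omega)\mapsto\pi$ is not canonical until you fix the choice of transversal completely: the base singularity, the direction (left vs.\ right horizontal separatrix), and crucially a normalization for its length. Different base zeros, or the same zero approached from the two sides, can yield $\pi$'s that lie in \emph{different} Rauzy classes (this is precisely the distinction between a Rauzy class and an extended Rauzy class). Your argument treats the transversal as ``generic'' without pinning these choices down; as written it would prove at best that the achievable $\pi$ lie in a union of Rauzy classes, which is strictly weaker. Second, even with the transversal fixed, the claim that ``the map is locally constant off the codimension-one walls, hence connectedness of $\C$ forces a single Rauzy class'' needs a transversality/homotopy argument you do not supply: one must check that any path in $\C$ between generic points can be perturbed to stay in $\C^{*}$ except for transverse crossings of the walls, and that at each such crossing the combinatorics changes by \emph{exactly} a top or bottom Rauzy operation rather than by some other modification. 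Your parenthetical ``sliding the endpoint of $I$ past a return point'' is also not the correct mechanism — the Rauzy move corresponds to \emph{shrinking} the transversal (cutting off the shorter of the two rightmost return subintervals), not sliding its endpoint — so the identification of wall-crossings with Rauzy moves requires a more careful geometric verification. Veech's original proof (and the expositions in \cite{VIANA} and \cite{MMY}) handles all of this by first fixing the reduction procedure that normalizes the transversal length, and then showing that the resulting coding is well-defined and Rauzy-compatible. As it stands your write-up correctly identifies the strategy but leaves the central combinatorial invariance unproved.
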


There is a natural identification 
\begin{equation}
\R^{\A}/\ker\Omega_{\pi}\cong H_{1}(\zip(\pi,\lambda,\tau),\R)\label{eq:homology identification}
\end{equation}
for each $(\pi,\lambda,\tau)\in X_{\pi}$. This descends to an isomorphism
of integral symplectic lattices

\begin{equation}
(\Z^{\A}/\ker(\Omega_{\pi}\lvert_{\Z^{\A}}),\omega_{\pi})\cong(H_{1}(S,\Z),\cap).\label{eq:integral-isomorphism}
\end{equation}
Therefore the pull back of the Hodge bundle to $X_{\pi}$ via $\zip$
is naturally trivialized: 
\begin{equation}
[\zip^{*}H_{1}(\H(\k))]\lvert_{X_{\pi}}\cong X_{\pi}\times\R^{\A}/\ker\Omega_{\pi}.\label{eq:trivialization}
\end{equation}
For a detailed discussion of this map see Viana \cite[Section 2.9]{VIANA}.
The bilinear form 
\[
(v_{1},v_{2})\mapsto\langle\lambda,-\Omega_{\pi}\tau\rangle
\]
descends to a nondegenerate symplectic form
$\omega_{\pi}$ on $\R^{\A}/\ker\Omega_{\pi}$. Under the identification
(\ref{eq:homology identification}), the form $\omega_{\pi}$ is precisely
the intersection form on homology. We also note here that the area
of $\zip(\pi,\lambda,\tau)$ is given by 
\begin{equation}
\area(\zip(\pi,\lambda,\tau))=\langle\lambda,-\Omega_{\pi}\tau\rangle.\label{eq:area_formula}
\end{equation}

\subsection{The Rauzy induction map}

Given $\pi,$ let $\alpha$ be the last element of the top row of
$\pi$ and $\beta$ the last element of the bottom row. Say that a
pair $(\pi,\lambda)$ has type \emph{top} if $\lambda_{\alpha}>\lambda_{\beta}.$
Say it has type \emph{bottom} if $\lambda_{\beta}<\lambda_{\alpha}$.
This splits each cell into two pieces of the form 
\[
X_{\pi,\epsilon}=\{\:(\pi,\lambda,\tau)\in \{\pi \}\times \R_+^\A\times \K_\pi \text{\::\:(\ensuremath{\pi},\ensuremath{\lambda}) of type \ensuremath{\epsilon}\;\}},\quad\epsilon\in\{t,b\}
\]
together with a hyperplane. We also introduce
$Y_{\pi,\epsilon}=\{(\pi,\lambda) \in \{\pi \}\times \R_+^\A \:  \:\text{of type \ensuremath{\e}}\:\},$
so that 
\[
X_{\pi,\epsilon}=Y_{\pi,\epsilon}\times\K_{\pi}.
\]

We now give an assignment of a linear map
$\Theta_{\pi,\epsilon}:\R^{\A}\to\R^{\A}$ to each pair $(\pi,\epsilon)$.
This is given by \cite[(1.9),(1.10)]{VIANA}
\begin{equation}
[\Theta_{\pi,\epsilon}]_{\alpha,\beta}:=\begin{cases}
1 & \text{if }\mbox{\ensuremath{\alpha}=\ensuremath{\beta}}\\
1 & \text{if }\mbox{\ensuremath{\alpha}}\text{ loses and \ensuremath{\beta} wins in type \ensuremath{\epsilon} move at \ensuremath{\pi}}\\
0 & \text{else.}
\end{cases}\label{eq:Theta-defn}
\end{equation}
If $\pi'$ is obtained from $\pi$ by a type $\epsilon$ move then
the map\footnote{Here and henceforth a $*$ denotes a transpose with respect to the
standard basis of $\R^{\A}$.} 
$\Theta^*_{\pi,\epsilon}$ maps $Y_{\pi'}:=\{\pi'\}\times\R_+^\A$ homeomorphically to $Y_{\pi,\epsilon}$.
Furthermore $(\Theta_{\pi,\epsilon}^{*})^{-1}$ maps $\K_{\pi}$ injectively
into $\K_{\pi'}$ \cite[Lemma 2.13]{VIANA}. We also
 have the intertwining relation
\begin{equation}
\Theta_{\pi,\epsilon}\Omega_{\pi}\Theta_{\pi,\epsilon}^{*}=\Omega_{\pi'}.\label{eq:theta_omega_compatibility}
\end{equation}

The \emph{Rauzy induction map} on suspension data is given by 
\[
\hRI(\pi,\lambda,\tau):=(\pi',(\Theta_{\pi,\e}^{*})^{-1}\lambda,(\Theta_{\pi,\e}^{*})^{-1}\tau)
\]
 when $(\pi,\lambda,\tau)\in X_{\pi,\e}$; here again $\pi'$ is obtained
from $\pi$ by an operation of type $\e$. Using the same notation,
notice that $\hRI$ is a skew extension of the map\footnote{As a comment for the initiated, the map $Q$ is the Rauzy induction
map on Interval Exchange Transformations. See \cite{RAUZY} for Rauzy's
original analysis of this map.} 
\[
Q(\pi,\lambda)=(\pi',(\Theta_{\pi,\e}^{*})^{-1}\lambda).
\]
The equation (\ref{eq:theta_omega_compatibility}) together with the
area formula (\ref{eq:area_formula}) shows that $\hRI$ preserves
the area of the associated zippered rectangles. 
Hence $\hat{Q}$ preserves $X^{(1)}$.

The zippered
rectangles associated to $(\pi,\lambda,\tau)$ define the same point
in $\H(\k)$ as the zippered rectangles associated to $\hRI(\pi,\lambda,\tau),$
that is, 
\[
\zip\circ\hRI=\zip.
\]
See  Viana \cite[Section 2.8]{VIANA} for a clear explanation
of this fact.

We now define cylinders for the Rauzy induction map. Let $\gamma$
be a path in the Rauzy diagram associated to the class $\RR$. Throughout
the rest of the paper, we consider oriented paths that follow the
given direction of the edges\footnote{While it is not immediately obvious, the equivalence classes induced
by identifying end points of oriented paths coincide with the Rauzy
classes \cite[Lemma 1.23]{VIANA}.}. Suppose that $\gamma$ traverses vertices $\pi(0),\pi(1),\ldots,\pi(N)$
in that order. Then define 
\[
X_{\gamma}:=X_{\pi(0)}\cap\hRI^{-1}(X_{\pi(1)})\cap\hRI^{-2}(X_{\pi(2)})\cap...\cap\hRI^{-N}(X_{\pi(N)}).
\]
 Notice that $X_{\pi,\e}$ is the same as $X_{\gamma}$ where $\gamma$
is the outgoing type $\e$ arrow from $\pi$. We then define
$\Theta_{\gamma}$ in terms of the $\Theta_{\pi,\epsilon}$ by stating
that for $(\pi,\lambda,\tau)\in X_{_{\gamma}}$ we have
\[
\hRI^{N}(\pi(0),\lambda,\tau)=(\pi(N),(\Theta_{\gamma}^{*})^{-1}\lambda,(\Theta_{\gamma}^{*})^{-1}\tau).
\]
We define 
$Y_{\gamma}=Y_{\pi(0)}\cap\ldots\cap Q^{-N}(Y_{\pi(N)})$
the analogous cylinder for $Q$. If $\gamma$ begins at $\pi$ then
we define the subcone of $\K_{\pi}$ 
\[
\K_{\gamma}:=(\Theta_{\gamma}^{*})^{-1}\K_{\pi}.
\]

\subsection{The Rauzy-Veech group\label{sub:The-Rauzy-Veech-group}}

Observe that $\Theta_{\pi,\epsilon}^{*}$ induces a map $\Z^{\A}/\ker\Omega_{\pi'}\to\Z^{\A}/\ker\Omega_{\pi}$
in light of (\ref{eq:theta_omega_compatibility}) and the fact that
$\Theta_{\pi,\epsilon}$ is integral from (\ref{eq:Theta-defn}).
These facts are discussed by Viana in \cite[Section 2.8]{VIANA}.
As a consequence, (\ref{eq:theta_omega_compatibility}) implies that
if $\gamma$ begins and ends at $\pi$, $\Theta_{\gamma}^{*}$ induces
a symplectic endomorphism of $(\Z^{\A}/\ker(\Omega_{\pi}\lvert_{\Z^{\A}}),\omega_{\pi})\cong^{\eqref{eq:integral-isomorphism}}(H_{1}(S,\Z),\cap)$.
In fact it is easy to check from (\ref{eq:Theta-defn}) that $\Theta_{\gamma}^{*}$
is an automorphism. We therefore view each 
\[
\Theta_{\gamma}^{*}\in\Sp(\Z^{2g},\omega_{\pi}).
\]

For each $\pi\in\RR$ let
$G_{\pi}$ be the
subgroup of $\Sp(\Z^{2g},\omega_{\pi})$ generated by the $\Theta_{\gamma}^{*}$
obtained as $\gamma$ ranges over loops in $\RR$ beginning and ending
at $\pi.$ This group $G_{\pi}$ is called the \emph{Rauzy-Veech group}
at $\pi.$ 

The key property of $G_{\pi}$ that we rely on is the following recent theorem of Guti\'{e}rrez-Romo \cite[Theorem 1.1]{GR} that was previously known for certain \emph{hyperelliptic} components by work of Avila, Matheus, and Yoccoz \cite[Theorem 1.1]{AMY}.

\begin{thm}[Guti\'{e}rrez-Romo, Avila-Matheus-Yoccoz]
\label{thm:zorich-conjecture} For any Rauzy class $\RR$ there exists $\pi\in\RR$ such that $G_{\pi}$ contains the principal congruence subgroup of level 2 of $\Sp(\Z^{2g},\omega_{\pi})$. Recall the principal congruence subgroup of level 2  is the kernel of reduction modulo 2.
\end{thm}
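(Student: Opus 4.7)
The plan is to work at a carefully chosen $\pi \in \RR$ and exhibit a rich family of explicit elements inside $G_\pi$, then appeal to a group-theoretic criterion that guarantees containment of the level-$2$ principal congruence subgroup of $\Sp(\Z^{2g},\omega_\pi)$. I would break the argument into four steps.

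First, I would set up a dictionary between loops in the Rauzy diagram based at $\pi$ and symplectic transvections. The building blocks $\Theta_{\pi,\epsilon}^*$ from \eqref{eq:Theta-defn} differ from the identity by a single off-diagonal entry, so after traversing a sufficiently short loop one obtains a product that, upon passing through the isomorphism \eqref{eq:integral-isomorphism} and invoking \eqref{eq:theta_omega_compatibility}, acts on $H_1(S,\Z)$ as a transvection $T_v : x \mapsto x + \omega_\pi(x,v)\,v$ for an integral class $v$ that can be read off from the combinatorics of the loop. In particular, by traversing the same loop twice, $T_v^2$ lies in $G_\pi$ for each such $v$.

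Second, I would enumerate enough such transvections to obtain a configuration of integral classes $v_1,\dots,v_k$ whose mod-$2$ reductions span $\Z^{2g}/2\Z^{2g}$ as a symplectic $\mathbf{F}_2$-module and whose intersection graph (an edge between $v_i$ and $v_j$ precisely when $\omega_\pi(v_i,v_j)$ is odd) is connected. Third, I would invoke a group-theoretic criterion in the spirit of A'Campo and Humphries: for a configuration $\{v_i\}$ satisfying these spanning and connectivity conditions, the subgroup of $\Sp(\Z^{2g},\omega_\pi)$ generated by $\{T_{v_i}^2\}$ contains the kernel of reduction modulo $2$. Because every $T_{v_i}^2$ already lies in $G_\pi$ by the first step, this would suffice.

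The main obstacle is the combinatorial verification in the second step, which has to be performed at \emph{some} $\pi$ in \emph{every} Rauzy class $\RR$. By the Kontsevich-Zorich classification this reduces to a finite case analysis over the connected components of strata (hyperelliptic, even spin, odd spin, and the sporadic low-genus cases). For hyperelliptic components the explicit generators produced by Avila-Matheus-Yoccoz \cite{AMY} make the verification direct. For the remaining components one must choose $\pi$ so that the requisite winners and losers are available at the base vertex, and then carefully track concatenations of Rauzy moves returning to $\pi$ in order to realise transvections along a rich enough set of classes; this is the technical heart of Gutiérrez-Romo's argument \cite{GR} and the step that I would expect to consume most of the effort.
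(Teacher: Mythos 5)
The paper does not prove this theorem: it is imported from Guti\'errez-Romo \cite{GR} (and, for hyperelliptic components, Avila--Matheus--Yoccoz \cite{AMY}). Your sketch correctly identifies the skeleton of those proofs --- extract symplectic transvections from short loops in the Rauzy diagram, verify a spanning and connectivity property mod $2$, invoke an A'Campo/Janssen-type criterion for groups generated by transvections, and finish with a case analysis over the Kontsevich--Zorich components. But the squaring step is both unnecessary and unjustified, and this is the one place where your argument as written is not sound.

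Concretely, if a loop $\gamma$ based at $\pi$ satisfies $\Theta_\gamma^* = T_v$ after passing through \eqref{eq:integral-isomorphism}, then $T_v$ itself is an element of $G_\pi$ by the very definition of the Rauzy--Veech group; passing to $T_v^2$ throws away information for no gain. Worse, the criterion you then invoke --- that mod-$2$ spanning and connectivity of $\{v_i\}$ force $\langle T_{v_i}^2 \rangle$ to contain the level-$2$ principal congruence subgroup --- is not the classical result and is not true in the form stated. Already for $g=1$ one has $\langle T_{e_1}^2, T_{e_2}^2\rangle$ strictly contained in $\Gamma(2) \subset \SL_2(\Z)$ since $-I$ is not a word in those two free generators; and even for $g\geq 2$ the known statement that $\Gamma(2)$ is generated by squared transvections requires transvections along \emph{all} primitive vectors, not a finite spanning family. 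The theorem you actually want is Janssen's classification of monodromy groups of skew-symmetric vanishing lattices (which subsumes A'Campo's result): under spanning over $\Z$ and connectivity of the intersection graph, the group generated by the \emph{unsquared} transvections $T_{v_i}$ is either all of $\Sp_{2g}(\Z)$ or the preimage of a specific finite subgroup of $\Sp_{2g}(\mathbf{F}_2)$, and in either case contains $\Gamma(2)$. That is the criterion Guti\'errez-Romo uses, and it is available to you precisely because the loops furnish the $T_v$, not merely their squares. With the squaring removed and the criterion corrected to the unsquared version, your outline matches the cited proofs; the remaining work, as you correctly note, is the combinatorial construction of a suitable family of loops in each Rauzy class.
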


Theorem \ref{thm:zorich-conjecture} resolved, in a strong form,  the conjecture of Zorich \cite[Appendix A.3 Conjecture 5]{ZORICHLEAVES} that $G_\pi$ should be Zariski-dense.

\subsection{Relationship to the Hodge bundle}

Let $\C$ be a connected component of $\H^{(1)}(\k)$ and let $\sigma$
be a section of the Hodge bundle $H_{1}(\C)$. The
pullback of $\sigma$ to any $X^{(1)}_{\pi}$ under the zippered rectangles
map can be naturally viewed as a
\[
\R^{\A}/\ker\Omega_{\pi}
\]
valued function $\tilde{\sigma}$ via the identifications (\ref{eq:homology identification})
and (\ref{eq:trivialization}). Since Rauzy induction does not change
the modulus of zippered rectangles, the fibre of $\zip^{*}H_{1}(\C)$
at $(\pi,\lambda,\tau)$ should be identified with the fibre at $\hRI(\pi,\lambda,\tau)$.
In fact, the identification involves the previously defined map $\Theta_{\gamma}$
and requires for $(\pi,\lambda,\gamma)\in X^{(1)}_{\pi,\epsilon}$ that
if $\pi'$ is the result of applying a type $\epsilon$ move to $\pi$
then
\begin{equation}
\tilde{\sigma}(\pi,\lambda,\tau)=\Theta_{\pi,\epsilon}^{*}\tilde{\sigma}(\hRI(\pi,\lambda,\tau)).\label{eq:compatibility1}
\end{equation}

The iterated form of the compatibility equation (\ref{eq:compatibility1})
that we will use is the following. If $\gamma$ is a path of $N$
edges in a Rauzy diagram that begins and ends at $\pi$, then for
$(\pi,\lambda,\tau)\in X^{(1)}_{\gamma}$ we have
\[
\tilde{\sigma}(\pi,\lambda,\tau)=\Theta_{\gamma}^{*}\tilde{\sigma}(\hRI^{N}(\pi,\lambda,\tau)).
\]
This is an important point of this paper as it describes the equivariance
properties of sections of the Hodge bundle in the suspension model.
We now extend this formula to the setting of associated orbifold vector
bundles $H_{1}(\C;\rho).$ After fixing $\pi,$ using the isomorphism
(\ref{eq:integral-isomorphism})  we identify
\[
\Gamma_{q}\cong\Sp((\Z/q\Z)^{2g},\omega_{\pi})
\]
so we may view $\rho_{q}$ and $\rho_{q}^{\new}$ as representations
of $\Sp(\Z^{2g},\omega_{\pi})$ that are submodules of\\
 $\ell^{2}(\Sp((\Z/q\Z)^{2g},\omega_{\pi}))$. More generally, using
(\ref{eq:integral-isomorphism}) we may pull back any unitary representation
$(\rho,V)$ of $\Sp(H_{1}(S;\Z),\cap)$ to a representation of $\Sp(\Z^{2g},\omega_{\pi})$
that we also call $\rho$. 

We may now argue by analogy with the Hodge bundle that if $\sigma$
is any section of the associated bundle $H_{1}(\C;\rho)$ then the
pull back $\tilde{\sigma}$ of this section to a $V$-valued function
on $X^{(1)}_{\pi}$ satisfies
\begin{equation}
\tilde{\sigma}(\pi,\lambda,\tau)=\rho(\Theta_{\gamma}^{*})\tilde{\sigma}(\hRI^{N}(\pi,\lambda,\tau)),\quad(\pi,\lambda,\tau)\in X^{(1)}_{\gamma}\label{eq:compatibility-section}
\end{equation}
 for each path $\gamma$ in $\RR$ of length $N$ beginning and ending
at $\pi.$

\subsection{A fundamental domain}

There is a nice fundamental domain for Rauzy induction on $X$ described
in \cite[pg. 159]{AGY}. We let $\F=\F_{\RR}$ denote the set of $(\pi,\lambda,\tau)$
such that either
\begin{enumerate}
\item $\hRI(\pi,\lambda,\tau)=(\pi',\lambda',\tau')\text{ is defined and }\|\lambda'\|<1\leq\|\lambda\|$
\item $\hRI(\pi,\lambda,\tau)\text{ is not defined and }1\leq\|\lambda\|$
\item $\hRI^{-1}(\pi,\lambda,\tau)\text{ is not defined and }\|\lambda\|<1.$
\end{enumerate}
The norm we use is $\|\lambda\|:=\sum_{\alpha\in\A}|\lambda_{\alpha}|$.
The fibres of the zippered rectangles map
\[
\zip:\F^{(1)} \to\C,\quad\C=\C(\RR)
\]
are almost everywhere finite with constant cardinality depending on
$\C$.

\subsection{The Teichm\"{u}ller flow on suspension data}

Recall that $\C$ is a connected component of $\H^{(1)}(\k)$ and $\RR$
the associated Rauzy class. The \emph{Teichm\"{u}ller} \emph{flow
}is a one parameter flow on $X_{\RR}$ that commutes with $\hRI$
and is given by $T_{t}(\pi,\lambda,\tau)=(\pi,e^{t}\lambda,e^{-t}\tau).$
Note that this preserves each $X_{\pi}^{(1)}$ and $X^{(1)}.$ The
flow $T_{t}$ lifts the Teichm\"{u}ller flow on $\C$, that is,
\[
\T_{t}\circ\zip=\zip\circ T_{t}.
\]
Evidently, $T_{t}$ preserves Lebesgue measure on $X.$ The flow $T_{t}$
also preserves Lebesgue measure on $X^{(1)}$, the pushforward of
which under $\zip$ is a multiple of $\nu_{\C}$.

\subsection{Time acceleration and renormalization.\label{sub:Time-acceleration-and}}

The approach of Avila, Gou\"{e}zel and Yoccoz \cite{AGY} to the
Teichmüller flow is to consider the first return time to an appropriately
chosen cross section. This cross section involves the choice of $\pi\in\RR$
and a path $\gamma_{0}$ that begins and ends at $\pi$. We give details
on the choice of $\gamma_{0}$ in Section \ref{sub:The-choice-of-g0}
and \ref{sub:Refining-the-choice-of-gamma-0}. For now, assume we
have chosen $\pi$ and $\gamma_{0}.$ 

We will use the notation 
$\F_{\pi}=\F\cap X_{\pi}$
and $\F_{\gamma}=\F\cap X_{\gamma}$. We consider the regions
\[
\hat{\Xi}:=\{\:(\pi,\lambda,\tau)\in\F_{\gamma_{0}}^{(1)}\::\:\|\lambda\|=1\:\}\cap(\{\pi\}\times Y_{\gamma_{0}}\times\K_{\gamma_{0}})
\]
and the closely related
\[
\Xi:=\{\:(\pi,\lambda)\in Y_{\gamma_{0}}\::\:\|\lambda\|=1\:\}.
\]
Let $\hat{m}$ (resp. $m)$ denote the normalized natural Lebesgue
measure on $\hat{\Xi}$ (resp. $\Xi$). It is known that almost all
orbits of the Teichmüller flow pass through $\hRI^{\Z}(\Xi)$, this
is stated in \cite[4.1.3]{AGY} as a consequence of the ergodicity
of the Veech flow\footnote{The Veech flow is not discussed in the current paper.}.\emph{
}For each $x\in\hat{\Xi}$ we denote by $r(x)$ the first return time
of $x$ to $\hRI^{\Z}(\Xi)$ under the Teichm\"{u}ller flow. That
is, 
$r(x)$ is the smallest positive value such that
\[
T_{r(x)}(x)\in\hRI^{-n}(\hat{\Xi})
\]
for some positive\footnote{Notice that from (\ref{eq:Theta-defn}) that $\Theta$ does not decrease
norms, so if $(\pi',\lambda',\tau')=\hRI(\pi,\lambda,\tau)$ then
$\|\lambda'\|\leq\|\lambda\|.$} integer $n$. This means there is some value
$\hat{Z}(x)\in\hat{\Xi}$ such that
\begin{equation}
T_{r(x)}\hRI^{n}(x)=\hRI^{n}T_{r(x)}(x)=\hat{Z}(x).\label{eq:first-return}
\end{equation}
Suppose that $x=(\pi,\lambda,\tau)\in X_{\gamma}$ with $\hRI^{n}(x)=(\pi,(\Theta_{\gamma}^{*})^{-1}\lambda,(\Theta_{\gamma}^{*})^{-1}\tau)\in\hat{\Xi}.$
Then 
\[
r(x)=-\log\|(\Theta_{\gamma}^{*})^{-1}\lambda\|.
\]
Note here that $r(\pi,\lambda,\tau)$ depends only on the coordinates
$(\pi,\lambda)$ and we can view $r$ also as a function on $\Xi$.

We will write $\gamma_{1}.\gamma_{2}$ or just $\gamma_{1}\gamma_{2}$
for the concatenation of two oriented paths $\gamma_{1}$ and $\gamma_{2}$
in $\RR$ with compatible endpoints. In $\gamma_{1}.\gamma_{2},$
$\gamma_{1}$ is the first path traversed. Consider $\gamma$ with
the property that the $\gamma_{0}$ subpaths of $\gamma.\gamma_{0}$
are precisely the beginning and the end segment. We say that such
a $\gamma$ is \emph{$\gamma_{0}$-adapted.}
For such a $\gamma$, if $x\in X_{\gamma.\gamma_{0}}\cap\hat{\Xi}$
then 
\[
\hat{Z}(x)=\left(\pi,\frac{(\Theta_{\gamma}^{*})^{-1}\lambda}{\|(\Theta_{\gamma}^{*})^{-1}\lambda\|},\|(\Theta_{\gamma}^{*})^{-1}\lambda\|(\Theta_{\gamma}^{*})^{-1}\tau\right).
\]
The domain of $\hat{Z}$ is therefore $\cup_{\gamma_{0}\text{-adapted }\gamma}\:\hat{\Xi}_{\gamma\gamma_{0}}$
where
\[
\hat{\Xi}_{\gamma\gamma_{0}}:=\hat{\Xi}\cap(Y_{\gamma\gamma_{0}}\times\K_{\gamma_{0}}).
\]
We extend this definition to $\hat{\Xi}_{\gamma_{1}\ldots\gamma_{N}\gamma_{0}}:=\hat{\Xi}\cap(Y_{\gamma_{1}\ldots\gamma_{N}\gamma_{0}}\times\K_{\gamma_{0}})$
where $\gamma_{1},\ldots,\gamma_{N}$ are a sequence of $\gamma_{0}$-adapted
paths with both endpoints equal to $\pi$.

Notice that the mapping $\hat{Z}$ has the following properties.
\begin{enumerate}
\item $\hat{Z}$ is a skew extension of the mapping $Z:\Xi\to\Xi$ defined
Lebesgue almost everywhere by 
\[
Z(\pi,\lambda)=\left(\pi,\frac{(\Theta_{\gamma}^{*})^{-1}\lambda}{\|(\Theta_{\gamma}^{*})^{-1}\lambda\|}\right),\quad(\pi,\lambda)\in Y_{\gamma.\gamma_{0}}.
\]
The connected components of the domain of $Z$ are the sets 
\[
\Xi_{\gamma\gamma_{0}}:=\Xi\cap Y_{\gamma\gamma_{0}}.
\]

\item The maps $\hat{Z}$ and $Z$ preserve $\|\lambda\|=1.$ This is usually
referred to as \emph{renormalization}.
\item The maps $\hat{Z}$ and $Z$ involve many iterations of Rauzy induction
and this is usually referred to as \emph{time acceleration}. This
is first due to Zorich \cite{ZORICHGAUSS}, see also \cite[Section 5.3 ]{zorichflat}
for further discussion.
\item $\hat{Z}$ (resp. $Z$) preserves the Lebesgue measure $\hat{m}$
(resp. $m$).
\end{enumerate}

Following \cite[Section 4.2.1]{AGY}, in order to enforce hyperbolicity
of the map $\hat{Z}$ (cf. Proposition \ref{prop:The-map-is uniformly epxanding markov}
and Lemma \ref{lem:-is-a-hyperbolic-skew-product}) one puts adapted
metrics on $\Xi$ and $\hat{\Xi}$. On $\Xi$ we put the Hilbert metric
$d_{\Xi}$ coming from the inclusion $\Xi\to Y_{\pi}$ and on $\hat{\Xi}$
we consider the product metric
\[
d_{\hat{\Xi}}((\pi,\lambda,\tau),(\pi,\lambda',\tau')):=d_{\Xi}((\pi,\lambda),(\pi,\lambda'))+d_{\K_{\pi}}(\tau,\tau')
\]
 where $d_{\K_{\pi}}$ is the Euclidean distance in $\K_{\pi}$. These
metrics induce Finsler metric structures on $\Xi$ and $\hat{\Xi}$
that make them into complete Finsler manifolds.

\subsection{Flow on sections of associated bundles in the suspension model\label{sub:Koopman}}

We may now map 
\[
\hat{\Xi}_{r}:=\{(x,s):x\in\hat{\Xi},\:s\in[0,r(x))\}
\]
 homeomorphically to a part of $X_{\pi}^{(1)}$ by the map

\begin{equation}
P:(x,s)\mapsto T_{s}(x).\label{eq:suspension-mapping}
\end{equation}
The image $X_{\pi}^{\prime(1)}$ of $P$ is up to a Lebesgue-null
set, a fundamental domain for the action of $\hRI$ on $X^{(1)}$.
Given a section of $H_{1}(\C;\rho)$, its pull back to $X^{(1)}$
is therefore determined (up to a zero measure set) by its values on
$X_{\pi}^{\prime(1)}\subset X_{\pi}^{(1)}$.

The pushforward of Lebesgue measure under the mapping in (\ref{eq:suspension-mapping})
is Lebesgue measure. We write
$\hat{m}_{r}=\hat{m}\otimes\Leb$
for the Lebesgue measure on $\hat{\Xi}_{r}$.

As explained in Section \ref{sub:The-Teichmuller-flow}, $\T_{t}$
acts by $\T_{t}^{*}$ on sections of $H_{1}(\C;\rho)$. If (after
pullback) we view a section $\tilde{\sigma}$ as a $V$-valued function
satisfying (\ref{eq:compatibility-section}) and then view $\tilde{\sigma}$
as a $V$-valued function $\hat{\sigma}$ on $\hat{\Xi}_{r}$ by the
mapping in (\ref{eq:suspension-mapping}) then the action of $T_{t}^{*}$
on $\hat{\sigma}$ will be denoted by $\hat{T}_{t}^{*}$ and defined
as follows. Let $\gamma$ be $\gamma_{0}$-adapted with $l(\gamma)=n$.
If $x\in X'_{\gamma.\gamma_{0}}\cap\hat{\Xi}$ and $t+s\in[r(x),r(x)+r(\hat{Z}(x)))$
then 
\begin{eqnarray*}
[\hat{T}_{t}^{*}\hat{\sigma}](x,s) & = & \hat{\sigma}(x,t+s)\\
 & = & \tilde{\sigma}(T_{t+s}x)=^{\eqref{eq:compatibility-section}}\rho(\Theta_{\gamma}^{*})\tilde{\sigma}(\hRI^{n}T_{t+s}x)\\
 & =^{\eqref{eq:first-return}} & \rho(\Theta_{\gamma}^{*})\tilde{\sigma}(T_{t+s-r(x)}\hat{Z}(x))=\rho(\Theta_{\gamma}^{*})\hat{\sigma}(\hat{Z}(x),t+s-r(x)).
\end{eqnarray*}
Let 
\[
r^{(N)}(x):=r(x)+r(\hat{Z}(x))+\ldots+r(\hat{Z}^{N-1}(x)).
\]
For $\gamma_{1},\gamma_{2},\ldots,\gamma_{N}$ each $\gamma_{0}$-adapted,
$t+s\in[r^{(N)}(x),r^{(N+1)}(x))$ and $x\in X'_{\gamma_{1}.\gamma_{2}.\ldots.\gamma_{N}.\gamma_{0}}\cap\hat{\Xi}$
we have then 
\begin{equation}
[\hat{T}_{t}^{*}\hat{\sigma}](x,s)=\rho(\Theta_{\gamma_{1}}^{*})\rho(\Theta_{\gamma_{2}}^{*})\ldots\rho(\Theta_{\gamma_{N}}^{*}).\hat{\sigma}(\hat{Z}^{N}(x),t+s-r^{(N)}(x)).\label{eq:pull-back-sections}
\end{equation}
This is the master equation for the Teichmüller flow on sections of
$H_{1}(\C;\rho)$ in our suspension model. Notice that the argument
of $\hat{\sigma}$ in the right hand side of (\ref{eq:pull-back-sections})
defines a mapping we call 
\[
\hat{T}_{t}:\hat{\Xi}_{r}\to\hat{\Xi}_{r},\quad\hat{T}_{t}(x,s):=(\hat{Z}^{N}(x),t+s-r^{(N)}(x))
\]
for $\:x\in X'_{\gamma_{1}.\gamma_{2}.\ldots.\gamma_{N}.\gamma_{0}}\cap\hat{\Xi}$
and $t+s\in[r^{(N)}(x),r^{(N+1)}(x)).$ Then $\hat{T}_{t}$ is the
suspension flow over $\hat{Z}$ with roof function $r.$ The flow
$\hat{T}_{t}$ lifts the Teichmüller flow under the mapping in (\ref{eq:suspension-mapping})
and as a consequence, Lebesgue measure $\hat{m}_{r}$ on $\hat{\Xi}_{r}$
is invariant under $\hat{T}_{t}$.

Since the roof function $r$ depends only on a coordinate in $\Xi$
we may also define 
\[
\Xi_{r}=\{(y,s):y\in\Xi,\:s\in[0,r(y))\}.
\]
We write 
$m_{r}$ for the Lebesgue measure on
$\Xi_{r}$. We also define for $r\in Z^{-(N-1)}(\Xi)$
\[
r^{(N)}(y):=r(y)+r(Z(y))+\ldots+r(Z^{N-1}(y)).
\]
We may define a similar operator to $\hat{T}_{t}^{*}$ that we will
call $T_{t}^{*}$ that will act on $V$-valued functions on $\Xi_{r}$.
For $\sigma:\Xi_{r}\to V$, $\gamma_{1},\gamma_{2},\ldots,\gamma_{N}$
each $\gamma_{0}$-adapted, $t+s\in[r^{(N)}(y),r^{(N+1)}(y))$ and
$y\in\Xi_{\gamma_{1}.\gamma_{2}.\ldots.\gamma_{N}.\gamma_{0}}$ we
define
\begin{equation}
[T_{t}^{*}\sigma](y,s)=\rho(\Theta_{\gamma_{1}}^{*})\rho(\Theta_{\gamma_{2}}^{*})\ldots\rho(\Theta_{\gamma_{N}}^{*}).\sigma(Z^{N}(y),t+s-r^{(N)}(y)).\label{eq:master-T_t-on-Xi-functions}
\end{equation}
We give $\Xi_{r}$ and $\hat{\Xi}_{r}$ Finsler metrics that are the
product of the Finsler metric on $\Xi$ (resp. $\hat{\Xi}$) with
the usual metric in the $s$ direction.

\subsection{Preliminary choice of $\gamma_{0}$.\label{sub:The-choice-of-g0}}

Recall $\gamma_{0}$ is a path in $\RR$ beginning and ending in $\pi$.
We now explain the choice of $\gamma_{0}$ that is made in \cite{AGY}.
Avila, Gou\"{e}zel and Yoccoz require that 
\begin{description}
\item [{(Strongly~Positive)}] $\gamma_{0}$ is a \emph{strongly positive}
path, meaning that all the entries of $\Theta_{\gamma}^{*}$ are positive
and moreover $(\Theta_{\gamma_{0}}^{*})^{-1}$ maps $\overline{\K}_{\pi}-\{0\}$
into $\K_{\pi}$.
\item [{(Neat)}] $\gamma_{0}$ is \emph{neat,} meaning that $\gamma_{0}=\gamma'\gamma_{e}=\gamma_{s}\gamma'$
implies $\gamma'$ is trivial or $\gamma'=\gamma_{0}$. This means
in any path, occurrences of $\gamma_{0}$ are (edge) disjoint. Therefore
$\gamma_{0}$-adapted $\gamma$ are precisely those of the form 
\[
\gamma=\gamma_{0}\gamma'.
\]
where $\gamma'$ does not contain $\gamma_{0}$ as a subpath.
\end{description}

According to \cite[Section 4.13]{AGY}, such a choice of $\gamma_{0}$
is possible. However, in the present paper, we must choose $\gamma_{0}$
more carefully, while still making sure $\gamma_{0}$ is strongly
positive and neat. This is done in Section \ref{sub:Refining-the-choice-of-gamma-0}.
For now, assume that $\gamma_{0}$ is strongly positive and neat.

\section{Decay of correlations}\label{sec:decay}

In this section we state in more precise terms and then prove Theorem
\ref{main}.{\bf C} on uniform exponential decay
of correlations.

\subsection{Dynamical setup}

The following definitions and results are from \cite{AGY}. Recall
the maps $\hat{Z}$ and $Z$ introduced in Section \ref{sub:Time-acceleration-and}.
Throughout we use the Finsler metric on the tangent bundle to $\Xi$
defined in Section \ref{sub:Time-acceleration-and}. We write $D$
for the total derivative of a function. We write $C^{0}(\Xi)$ for
the uniform norm. For a $V$-valued function $F$, $\|DF\|$ refers
to operator norm w.r.t. the Finsler metric on the fibres and the Hilbert
space metric on $V$. When we write
$\bigcup_{\gamma}^{*}$ or 
$\sum_{\gamma}^{*}$
it means that we restrict the indexing to $\gamma_{0}$-adapted $\gamma$.
We assume here that $\gamma_{0}$ is strongly positive and neat as
in Section \ref{sub:The-choice-of-g0}, since these conditions are required for
the results of  Avila, Gou\"{e}zel and Yoccoz \cite{AGY}.
\begin{prop}[{\cite[Proof of Proposition 4.3]{AGY}}]
\label{prop:The-map-is uniformly epxanding markov}The map $Z$ is
a \textbf{uniformly expanding Markov map} with respect to Lebesgue
measure $m$ and the Finsler metric structure defined in Section \ref{sub:Time-acceleration-and}.
That is to say
\begin{enumerate}
\item The union 
\[
\bigcup_{\gamma}^{*}\Xi_{\gamma\gamma_{0}}
\]
is a countable union of open sets that are $m$-conull in $\Xi$.
\item \label{enu:expanding}If $\gamma$ is $\gamma_{0}$-adapted, $Z$
maps $\Xi_{\gamma\gamma_{0}}$ diffeomorphically to $\Xi$ and there
are constants
$\Lambda>1$ and $c_{1}(\gamma)>0$
such that for all $x\in\Xi_{\gamma\gamma_{0}}$ and $v$ in the tangent
fibre to $x$
\[
\Lambda\|v\|\leq\|[DZ]_{x}.v\|\leq c_{1}(\gamma)\|v\|.
\]

\item \label{enu:uniformly-expanding-logJ-deriv}Let $J$ denote the
inverse of the Jacobian of $Z$ with respect to $m.$ The function
$\log J$ is $C^{1}$ on each $\Xi_{\gamma\gamma_{0}}$ and there
is some $C>0$ such that for any inverse branch $\alpha$ of $Z$,
\[
\sup_{y\in\Xi}\|D(\log J\circ\alpha)(y)\|\leq C.
\]

\end{enumerate}
\end{prop}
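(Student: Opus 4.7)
The plan is to deduce the three properties from the explicit description of $Z$ in Section \ref{sub:Time-acceleration-and} together with the strong positivity and neatness of $\gamma_0$. Recall that $Z$ acts on $(\pi,\lambda) \in \Xi_{\gamma\gamma_0}$ by applying $(\Theta_{\gamma}^*)^{-1}$ and then renormalizing to $\|\lambda\|=1$, and that neatness of $\gamma_0$ forces every $\gamma_0$-adapted $\gamma$ to begin with $\gamma_0$, so that $\Theta_\gamma^*$ always factors through $\Theta_{\gamma_0}^*$.

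For item (1), neatness gives a bijection between $\gamma_0$-adapted paths and finite paths $\gamma'$ in $\RR$ avoiding $\gamma_0$ as a subpath, so the collection is countable. Each $\Xi_{\gamma\gamma_0} = \Xi \cap Y_{\gamma\gamma_0}$ is cut out of $\Xi$ by finitely many strict linear inequalities and is therefore open, and the cells are pairwise disjoint. Conullness follows because $\bigcup_\gamma^* \Xi_{\gamma\gamma_0}$ is exactly the set of points in $\Xi$ that return to $\Xi$ under the Rauzy--Veech renormalization, and this return occurs $m$-almost surely as a consequence of Poincar\'{e} recurrence applied to the Teichm\"{u}ller flow on $\C$ with its ergodic probability measure $\nu_\C$ (Masur--Veech), transported through the zippered rectangles map and the cross-section construction.

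For item (2), on each $\Xi_{\gamma\gamma_0}$ the inverse branch $\alpha$ of $Z$ is $(\pi,\mu)\mapsto(\pi,\Theta_\gamma^*\mu/\|\Theta_\gamma^*\mu\|)$; its differential is the linearization of a projective action by a matrix with nonnegative entries. Because $\gamma$ begins with $\gamma_0$ and $\gamma_0$ is strongly positive, $\Theta_\gamma^*$ sends $\overline{\K}_\pi\setminus\{0\}$ strictly into $\K_\pi$, hence by the Birkhoff--Hopf theorem, $\alpha$ is a strict contraction of the Hilbert metric $d_\Xi$ with contraction ratio bounded by some $\Lambda^{-1}<1$ that depends only on $\gamma_0$. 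Dualizing, $DZ$ expands the Finsler metric by factor at least $\Lambda$, uniformly over all $\gamma_0$-adapted $\gamma$. The upper bound $c_1(\gamma)$ is a straightforward continuity bound for the smooth map $Z$ on the compact closure of the single cylinder $\Xi_{\gamma\gamma_0}$; it is allowed to depend on $\gamma$.

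For item (3), one computes that the inverse Jacobian of $Z$ at $y=(\pi,\lambda)\in\Xi_{\gamma\gamma_0}$ is, up to a constant, a power of $\|(\Theta_\gamma^*)^{-1}\lambda\|$, so $\log J \circ \alpha$ reduces to $\log$ of linear functionals in $\Theta_\gamma^*\mu$ evaluated on the image $\alpha(\Xi)$. The crucial fact is that $\alpha(\Xi)\subset\Xi_{\gamma\gamma_0}$ lies at uniform Hilbert distance from $\partial\K_\pi$ because $\Theta_\gamma^*$ factors through the strictly positive operator $\Theta_{\gamma_0}^*$, so the projective image of $\alpha$ is confined to a compact subset of the open cone whose size is controlled only by $\gamma_0$. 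On such a set the logarithmic derivative of any nonnegative linear form with respect to the Hilbert metric is bounded by an absolute constant $C$, independent of $\gamma$. The main obstacle is precisely this uniform distortion estimate: the inverse branches are indexed by an infinite family of arbitrarily complicated positive matrices, so without the strong positivity of $\gamma_0$ one could not confine the image to the interior of the cone, and the $C^1$ distortion of $\log J\circ \alpha$ would blow up along branches whose images approach $\partial \K_\pi$. Strong positivity of $\gamma_0$ is what converts an otherwise unbounded Euclidean distortion problem into a bounded Hilbert-metric one.
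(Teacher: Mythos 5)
The paper does not prove this proposition; it is cited verbatim from Avila--Gou\"ezel--Yoccoz \cite[Proposition 4.3 and its proof]{AGY}, so there is no in-house argument to compare against. Your reconstruction captures the correct strategy that AGY use: countability and openness of the Markov partition from neatness, conullness from ergodicity, uniform expansion from the Birkhoff--Hopf contraction of the Hilbert projective metric driven by strong positivity of $\gamma_0$, and bounded $C^1$ distortion of $\log J$ from the uniform confinement of inverse-branch images to a compact subset of the open cone.

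There is, however, a persistent confusion between two different cones that you should fix. The map $Z$ acts on $\Xi$, which sits inside the projectivization of the positive orthant $\R_+^{\A}$ in $\lambda$-space; the Hilbert metric $d_\Xi$ is the one coming from $\R_+^{\A}$. The cone $\K_\pi$, by contrast, lives in $\tau$-space and is the relevant object for the \emph{stable} contraction of the hyperbolic skew product $\hat Z$ (Lemma \ref{lem:-is-a-hyperbolic-skew-product}, item 4), not for the expansion of the base map $Z$. Correspondingly, the two halves of ``strongly positive'' do different jobs: the fact that $(\Theta_{\gamma_0}^*)^{-1}$ maps $\overline{\K}_\pi\setminus\{0\}$ into $\K_\pi$ feeds the $\hat Z$-contraction, whereas the fact you need for items (2) and (3) here is that $\Theta_{\gamma_0}^*$ has \emph{all entries strictly positive}, which combined with the nonnegativity of every $\Theta_{\gamma'}^*$ forces $\Theta_{\gamma}^* = \Theta_{\gamma_0}^*\Theta_{\gamma'}^*$ to send $\overline{\R_+^\A}\setminus\{0\}$ strictly into $\R_+^\A$, confining the projective images $\alpha_\gamma(\Xi)$ to a fixed compact subset of the simplex. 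Your statements ``$\Theta_\gamma^*$ sends $\overline{\K}_\pi\setminus\{0\}$ strictly into $\K_\pi$'' and ``the projective image of $\alpha$ is confined to a compact subset of the open cone $\K_\pi$'' should therefore read $\R_+^\A$ throughout. With that substitution your argument matches AGY's.
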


\begin{lem}[{\cite[Lemma 4.3]{AGY}}]
\label{lem:-is-a-hyperbolic-skew-product}The pair $(\hat{Z},\hat{m})$
is a \textbf{hyperbolic skew product} over $(Z,m)$. This means, with
all norms and distances coming from the Finsler metric on $\hat{\Xi}$
defined in Section \ref{sub:Time-acceleration-and},
\begin{enumerate}
\item The projection $\pr:\hat{\Xi}\to\Xi$ defined by
\[
\pr(\pi,\lambda,\tau)=(\pi,\lambda)
\]
satisfies $Z\circ\pr=\pr\circ\hat{Z}$ whenever both sides of the
equality are defined.
\item The measure $\hat{m}$ gives full mass to the domain of definition
of $\hat{Z}$.
\item There is a family of probability measures $\{\hat{m}_{y}\}_{y\in\Xi}$
on $\hat{\Xi}$ which is a disintegration of $\hat{m}$ over $m$
in the following sense: $y\mapsto\hat{m}_{y}$ is measurable, $\hat{m}_{y}$
is supported on $\pr^{-1}(y)$ and for any measurable $U\subset\hat{\Xi,}\mbox{ }$
$\hat{m}(U)=\int_{y\in\Xi}\hat{m}_{y}(U)dm(y)$. Moreover, there is
a constant $C>0$ such that for any open $V\subset Z^{-1}(\Xi)$,
for any $u\in C^{1}(\pr^{-1}(V))$ the function $\bar{u}(x)=\int u(x)d\hat{m}_{y}(x)$
is in $C^{1}(V)$ with
\[
\sup_{y\in V}\|D\bar{u}(x)\|\leq C\sup_{x\in\pr^{-1}(V)}\|Du(y)\|.
\]

\item There is a constant $K>1$ such that for all $x_{1},x_{2}\in\hat{\Xi}$
with $\pr(x)=\pr(y)$ we have
\[
d_{\hat{\Xi}}(\hat{Z}(x_{1}),\hat{Z}(x_{2}))\leq K^{-1}d_{\hat{\Xi}}(x_{1},x_{2}).
\]

\end{enumerate}
\end{lem}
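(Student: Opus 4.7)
The plan is to verify the four conditions in turn. Items (1) and (2) are essentially formal, (3) is a standard disintegration, and (4) contains the main geometric content.

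For (1), both $Z$ and $\hat{Z}$ act on the $(\pi,\lambda)$-coordinates by the same rule: if $x=(\pi,\lambda,\tau)\in X_{\gamma\gamma_0}\cap\hat\Xi$ then
\[
\pr(\hat Z(x))=\left(\pi,\,\tfrac{(\Theta_\gamma^*)^{-1}\lambda}{\|(\Theta_\gamma^*)^{-1}\lambda\|}\right)=Z(\pr(x)).
\]
For (2), the domain of $\hat{Z}$ equals $\pr^{-1}$ of the domain of $Z$; since the latter is $m$-conull by Proposition~\ref{prop:The-map-is uniformly epxanding markov}(1), Fubini applied to the product structure $\hat\Xi\subset\Xi\times\K_{\gamma_0}$ gives that the domain of $\hat{Z}$ is $\hat m$-conull. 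For (3), for each $y=(\pi,\lambda)\in\Xi$ the fibre $\pr^{-1}(y)\cap\hat\Xi$ is the intersection of $\{y\}\times\K_{\gamma_0}$ with the affine hyperplane $\langle\lambda,-\Omega_\pi\tau\rangle=1$ coming from the area formula~\eqref{eq:area_formula} together with the fundamental-domain conditions on $\tau$. This is a bounded piece of a smooth submanifold that varies smoothly in $y$, so I would define $\hat m_y$ to be the normalized induced Lebesgue measure on this slice. The family $\{\hat m_y\}$ then disintegrates $\hat m$ because $\hat m$ is itself Lebesgue on $\hat\Xi$, and the $C^1$ bound for $\bar u$ follows from differentiation under the integral sign combined with uniform bounds on the variation of the slice and its normalization.

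The main point is (4). For $x_i=(\pi,\lambda,\tau_i)$, $i=1,2$, with $\pr(x_1)=\pr(x_2)=(\pi,\lambda)\in\Xi_{\gamma\gamma_0}$, one computes
\[
\hat{Z}(x_i)=\left(\pi,\,\tfrac{(\Theta_\gamma^*)^{-1}\lambda}{\mu},\,\mu\,(\Theta_\gamma^*)^{-1}\tau_i\right),\qquad \mu:=\|(\Theta_\gamma^*)^{-1}\lambda\|.
\]
Since $\pr(\hat Z(x_1))=\pr(\hat Z(x_2))$, the $d_{\hat\Xi}$-distance between the two images equals $\mu\cdot\|(\Theta_\gamma^*)^{-1}(\tau_1-\tau_2)\|$. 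The factor $\mu$ is bounded above by $\Lambda^{-1}$ uniformly in $\gamma$ by the expansion estimate of Proposition~\ref{prop:The-map-is uniformly epxanding markov}(2). For the remaining factor, the observation is that $\tau_1,\tau_2\in\K_{\gamma_0}=(\Theta_{\gamma_0}^*)^{-1}\K_\pi$ lie inside a slice of this narrower cone, and the strong positivity of $\gamma_0$ supplies a projective cone contraction for the action of $(\Theta_\gamma^*)^{-1}$ on vectors tangent to the slice; this contraction is inherited by the Euclidean metric on the slice. Combining the two effects gives a uniform constant $K>1$.

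The main obstacle is the last step of (4): while $\mu\leq\Lambda^{-1}$ is a genuine contraction in the $\lambda$-direction, a priori $(\Theta_\gamma^*)^{-1}$ could expand $\tau$-vectors in directions transverse to $\K_\pi$, and a direct operator-norm bound is unavailable because $\gamma$ ranges over arbitrarily long $\gamma_0$-adapted paths. The saving point is the confinement of $\tau_1-\tau_2$ to the linear span of a compact slice of $(\Theta_{\gamma_0}^*)^{-1}\K_\pi$, where the nonnegative matrix $(\Theta_\gamma^*)^{-1}\Theta_{\gamma_0}^{*}$ acts contractively in Hilbert's projective sense thanks to the strong positivity of $\gamma_0$ chosen in Section~\ref{sub:The-choice-of-g0}. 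Extracting from this a uniform Euclidean contraction constant independent of $\gamma$ is the technical heart of the argument.
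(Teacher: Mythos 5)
The paper offers no proof of this lemma; it simply cites \cite[Lemma~4.3]{AGY}, so your proposal is a blind reconstruction and must be judged on its own. Items (1) and (2) are correct and essentially formal, and your sketch of (3) via normalized Lebesgue measure on the bounded affine slices of $\K_{\gamma_0}$ cut out by the area constraint is the right picture.

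The difficulties are in (4), where you correctly locate the heart of the matter but two steps do not hold as written. First, the claim that $\mu\le\Lambda^{-1}$ ``by the expansion estimate of Proposition~\ref{prop:The-map-is uniformly epxanding markov}(\ref{enu:expanding})'' is a misattribution: that item controls $\|DZ\cdot v\|$ in the Hilbert metric on $\Xi$ and does not directly bound the scalar $\mu=\|(\Theta_\gamma^*)^{-1}\lambda\|$. What is true, and what the argument actually needs, is $\mu\le c_{\gamma_0}<1$ with $c_{\gamma_0}$ depending only on $\gamma_0$; this comes directly from strong positivity, since with $\gamma=\gamma_0\gamma'$ one has $(\Theta_\gamma^*)^{-1}=(\Theta_{\gamma'}^*)^{-1}(\Theta_{\gamma_0}^*)^{-1}$, the factor $(\Theta_{\gamma'}^*)^{-1}$ is $\ell^1$-nonincreasing on the positive vectors occurring in the cylinder, and $\|\Theta_{\gamma_0}^*v\|\ge d\|v\|$ for positive $v$ because every entry of $\Theta_{\gamma_0}$ is a positive integer. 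Second, the matrix you call ``the nonnegative matrix $(\Theta_\gamma^*)^{-1}\Theta_{\gamma_0}^*$'' is in fact $(\Theta_{\gamma'}^*)^{-1}$ (using $\Theta_{\gamma_0\gamma'}^*=\Theta_{\gamma_0}^*\Theta_{\gamma'}^*$), and this is not a nonnegative matrix: it is the inverse of a unipotent matrix with nonnegative integer entries and has negative off-diagonal entries. The genuine positivity property is cone positivity, $(\Theta_\gamma^*)^{-1}\K_\pi\subset\K_\gamma$, not entrywise nonnegativity. Finally, even granting Hilbert-metric contraction, transferring it to the Euclidean metric $d_{\K_\pi}$ that $d_{\hat\Xi}$ actually uses costs a bi-Lipschitz comparison constant on the compact slice; a single application of $\hat Z$ contracts Euclidean distance only if the projective contraction factor beats that constant. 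This requires either a direct operator-norm estimate for $\mu(\Theta_\gamma^*)^{-1}$ restricted to the tangent space of the area-one slice, or a quantitatively strong (not merely qualitative) choice of $\gamma_0$. Your sketch names the right ingredients but leaves this final quantitative step open, and as written it acknowledges this by calling it ``the technical heart'' without resolving it.
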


\begin{lem}[{\cite[Lemma 4.5]{AGY}}]
\label{lem:The-roof-function-is-good}The roof function $r$ is \textbf{good}.
This means
\begin{enumerate}
\item There is $\e_{1}>0$ such that $r\geq\e_{1}.$
\item \label{enu:rooffunctiongoodbounded-deriv}There is $C>0$ such that
for any inverse branch $\alpha$ of $Z$ one has
\[
\sup_{y\in\Xi}\|D(r\circ\alpha)(y)\|\leq C.
\]

\item There is no $C^{1}$ function $\phi$ on $\bigcup_{\gamma}^{*}\Xi_{\gamma\gamma_{0}}$
such that 
\[
r-\phi\circ T+\phi
\]
 is constant on each $\Xi_{\gamma\gamma_{0}}.$
\end{enumerate}
\end{lem}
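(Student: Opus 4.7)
The plan is to verify the three bulleted conditions in turn, using the formula $r(x)=-\log\|(\Theta_{\gamma}^{*})^{-1}\lambda\|$ from Section~\ref{sub:Time-acceleration-and} together with the strong positivity and neatness of $\gamma_0$.

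For (1), neatness forces every $\gamma_0$-adapted $\gamma$ to factor as $\gamma=\gamma_0\gamma'$, so $(\Theta_{\gamma}^{*})^{-1}=(\Theta_{\gamma'}^{*})^{-1}(\Theta_{\gamma_{0}}^{*})^{-1}$. Strong positivity sends the compact cross-section $\{\lambda\in\overline{\K}_\pi:\|\lambda\|=1\}$ into the open cone $\K_\pi$ under $(\Theta_{\gamma_{0}}^{*})^{-1}$, so by compactness there is $c<1$ with $\|(\Theta_{\gamma_0}^*)^{-1}\lambda\|\leq c$ on this set. The footnote in Section~\ref{sub:Time-acceleration-and} records that the remaining factor $(\Theta_{\gamma'}^*)^{-1}$ does not increase the norm, so $\|(\Theta_\gamma^*)^{-1}\lambda\|\leq c$ uniformly, giving $\e_1=-\log c>0$.

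For (2), a direct unwinding of definitions shows that the inverse branch $\alpha:\Xi\to\Xi_{\gamma\gamma_0}$ of $Z$ satisfies $(r\circ\alpha)(\lambda)=\log\|\Theta_{\gamma\gamma_0}^{*}\lambda\|=\log\langle\Theta_{\gamma\gamma_0}\mathbf{1},\lambda\rangle$, the second equality using that strong positivity makes $\Theta_{\gamma\gamma_0}^{*}\lambda$ coordinatewise positive so that the $\ell^{1}$ norm is linear. The infinitesimal form of the Hilbert metric on $\Xi$ at $\lambda$ is $F_\lambda(\delta\lambda)=\max_i(\delta\lambda_i/\lambda_i)-\min_i(\delta\lambda_i/\lambda_i)$, and writing $v=\Theta_{\gamma\gamma_0}\mathbf{1}$, the derivative along a tangent $\delta\lambda$ (with $\sum_i\delta\lambda_i=0$) is
\[
\frac{\langle v,\delta\lambda\rangle}{\langle v,\lambda\rangle}=\sum_i\frac{v_i\lambda_i}{\langle v,\lambda\rangle}\cdot\frac{\delta\lambda_i}{\lambda_i},
\]
a convex combination of the quantities $\delta\lambda_i/\lambda_i$. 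Since $\sum_i\lambda_i(\delta\lambda_i/\lambda_i)=\sum_i\delta\lambda_i=0$ is another convex combination of these quantities, their minimum and maximum straddle zero, so the absolute value of the displayed expression is bounded by $F_\lambda(\delta\lambda)$. This gives the required uniform bound independent of $\gamma$.

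Part (3) is the main obstacle. I would argue by contradiction using periodic orbits of $Z$. Suppose $r=\phi\circ Z-\phi+c_\gamma$ on each $\Xi_{\gamma\gamma_0}$ for some $C^1$ function $\phi$. For each $\gamma_0$-adapted loop $\gamma$ based at $\pi$, strong positivity makes $(\Theta_{\gamma\gamma_0}^{*})^{-1}$ a strict projective contraction, so $Z\lvert_{\Xi_{\gamma\gamma_0}}$ admits a unique fixed point $x_\gamma$, namely the projectivized Perron eigenvector of $\Theta_{\gamma\gamma_0}^{*}$. Evaluating the coboundary relation at $x_\gamma$ forces $c_\gamma=r(x_\gamma)=\log\Lambda(\Theta_{\gamma\gamma_0}^{*})$, where $\Lambda$ denotes the Perron--Frobenius eigenvalue. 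Applying the same logic to the unique two-periodic orbit of $Z$ with itinerary $\Xi_{\gamma_1\gamma_0},\Xi_{\gamma_2\gamma_0}$ yields
\[
\log\Lambda\!\left(\Theta_{\gamma_1\gamma_0}^{*}\Theta_{\gamma_2\gamma_0}^{*}\right)=c_{\gamma_1}+c_{\gamma_2}=\log\Lambda(\Theta_{\gamma_1\gamma_0}^{*})+\log\Lambda(\Theta_{\gamma_2\gamma_0}^{*}),
\]
so the Perron eigenvalue would be multiplicative on every product $AB$ of matrices of the form $\Theta_{\gamma_i\gamma_0}^{*}$. A standard fact about positive matrices is that $\Lambda(AB)=\Lambda(A)\Lambda(B)$ forces the Perron eigendirections of $A$ and $B$ to coincide. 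This is incompatible with Theorem~\ref{thm:zorich-conjecture}, which tells us that the group generated by such matrices contains the level-$2$ principal congruence subgroup of $\Sp(\Z^{2g},\omega_\pi)$ and is in particular far from commutative. The technical work left over, which I expect to be the hardest step, is to exhibit two concrete $\gamma_0$-adapted loops $\gamma_1,\gamma_2$ whose associated positive matrices have distinct Perron eigendirections, rather than reading this off abstractly from the group-theoretic largeness.
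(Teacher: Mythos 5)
The statement you are proving is cited in the paper directly as \cite[Lemma 4.5]{AGY}, so there is no proof in this manuscript to compare against; I will assess your argument on its own terms.

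Parts (1) and (2) are essentially right, modulo two small slips in notation. In (1) you write the cross-section as $\{\lambda\in\overline{\K}_\pi:\|\lambda\|=1\}$, but $\K_\pi$ is the cone of $\tau$-data; the relevant set is the closure of $Y_{\gamma_0}\cap\{\|\lambda\|=1\}$ inside $\overline{\R_+^\A}$, and the point is that $\Theta_{\gamma_0}^*$ having all entries a positive integer forces $\|\Theta_{\gamma_0}^*\mu\|\geq d\|\mu\|$, hence $\|(\Theta_{\gamma_0}^*)^{-1}\lambda\|\leq 1/d<1$ (combined with the non-increase under $(\Theta_{\gamma'}^*)^{-1}$). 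In (2) the inverse branch $\alpha_\gamma$ of equation \eqref{eq:alpha-defn} uses $\Theta_\gamma^*$, not $\Theta_{\gamma\gamma_0}^*$; unwinding gives $(r\circ\alpha_\gamma)(\lambda)=\log\langle\Theta_\gamma\mathbf{1},\lambda\rangle$. Your convex-combination bound against the infinitesimal Hilbert metric is then exactly right and gives $\|D(r\circ\alpha_\gamma)\|\leq 1$ uniformly, which is a clean proof.

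Part (3) has a genuine error: the claimed ``standard fact'' that $\Lambda(AB)=\Lambda(A)\Lambda(B)$ for positive matrices $A,B$ forces the right Perron eigendirections of $A$ and $B$ to coincide is \emph{false}. Take
\[
A=\begin{pmatrix}1&2\\3&2\end{pmatrix},\qquad B=\begin{pmatrix}3&1\\1&3\end{pmatrix},\qquad
AB=\begin{pmatrix}5&7\\11&9\end{pmatrix}.
\]
Then $\Lambda(A)=4$ with right Perron vector $(2,3)$, $\Lambda(B)=4$ with right Perron vector $(1,1)$, and $\Lambda(AB)=16=\Lambda(A)\Lambda(B)$, even though the right eigendirections are distinct. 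What is true is that a shared \emph{left} Perron eigendirection (or a shared right one) is sufficient for multiplicativity — in the example $A$ and $B$ both have left Perron vector $(1,1)$ — so the correct converse, if it holds at all, must involve both the left and right eigendirections, and deducing a common eigenvector for the whole family from multiplicativity over \emph{all} periodic words (not just length two) would require a separate argument. Since the contradiction with Theorem \ref{thm:zorich-conjecture} was supposed to come precisely from a common invariant line, the logical chain breaks at this point, and this is in addition to the gap you already flag about producing two explicit loops. As it stands, Part (3) is not established.
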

\begin{thm}[{\cite[Theorem 4.6]{AGY}}]
\label{thm:The-roof-function-has-exponential-tails}The roof function
$r$ has \textbf{exponential tails}. This means there is $\sigma_{0}>0$
such that
\[
\int_{\Xi}\exp(\sigma_{0}r)dm<\infty.
\]
\end{thm}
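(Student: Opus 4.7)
The plan is to deduce the exponential tail estimate from an exponential recurrence estimate for the Teichm\"{u}ller flow on $\C$ due to Athreya \cite{ATHREYA}. Athreya's theorem provides a compact $K \subset \C$ and constants $C_0, \kappa > 0$ so that for all $T > 0$,
\[
\nu_{\C}\bigl(\{y \in \C : \T_t(y) \notin K \text{ for all } t \in [0, T]\}\bigr) \leq C_0 e^{-\kappa T}.
\]
Given this, the goal reduces to proving $m\{r > T\} \leq C e^{-\kappa' T}$ for some $\kappa' > 0$, since then
\[
\int_\Xi e^{\sigma_0 r}\, dm = \sigma_0 \int_0^\infty e^{\sigma_0 T}\, m\{r > T\}\, dT
\]
converges for any $\sigma_0 < \kappa'$.

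The geometric bridge is the identity $r(x) = -\log\|(\Theta_\gamma^*)^{-1}\lambda\|$ on $\hat{\Xi}_{\gamma\gamma_0}$, together with the fact (visible from the defining formula for $\hRI$ and the positivity of Rauzy induction) that for $\lambda \in Y_{\gamma\gamma_0}$, the vector $(\Theta_\gamma^*)^{-1}\lambda$ has non-negative entries and $\ell^1$ norm equal to the sum of its coordinates. Thus $r(x) > T$ forces every entry of $(\Theta_\gamma^*)^{-1}\lambda$ to be at most $e^{-T}$; via the zippered rectangles identification, this means the induced interval exchange on the translation surface $\zip(x)$ has base interval of total length $<e^{-T}$, which (by the standard correspondence between inducted IETs and the geometry of translation surfaces, using that total area is preserved) forces the surface to carry a saddle connection of length bounded by a constant multiple of $e^{-T}$. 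Under the Teichm\"{u}ller flow this saddle connection stretches as $e^t$ times its initial length, so the systole of $\zip(\T_t x)$ is at most $C e^{t-T}$ throughout $t \in [0, T/2]$. Choosing $K$ to be a sublevel set of the form $\{\mathrm{sys} \geq c\}$ in Athreya's theorem therefore shows $\zip(\T_t x)$ avoids $K$ for a flow interval of length proportional to $T$.

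The conclusion is then routine: pushing forward the transverse measure $\hat{m}$ on $\hat{\Xi}$ to $\C$ via $\zip$ (noting $\zip$ sends the flow-invariant Lebesgue measure on $X^{(1)}$ to a multiple of $\nu_\C$), the Athreya estimate applied to the image of $\{x : r(x) > T\}$ in $\C$ yields $\hat{m}\{r > T\} \leq C e^{-\kappa' T}$ with $\kappa' = \kappa/2$, and since $r$ is a function of $(\pi, \lambda)$ alone the same bound holds for $m\{r > T\}$. The hard part is the quantitative geometric step in the middle paragraph: converting the purely combinatorial bound $\|(\Theta_\gamma^*)^{-1}\lambda\| < e^{-T}$ into a uniform systole estimate over an entire flow interval, rather than just at its endpoints. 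Controlling this uniformly requires strong positivity and neatness of $\gamma_0$, which ensures that $\gamma$ cannot be split into subpaths whose intermediate surfaces escape the thin part back toward the compact core, decoupling the algebraic and geometric measurements.
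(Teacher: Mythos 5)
The paper itself does not prove this statement; it cites \cite[Theorem 4.6]{AGY} directly, remarking only that the proof ``relies on exponential recurrence estimates for the Teichm\"{u}ller flow that were first obtained by Athreya.'' Your proposal correctly identifies Athreya's exponential recurrence as the driving input, and the overall shape of the argument — translate $r(x) > T$ into the combinatorial statement $\|(\Theta_\gamma^*)^{-1}\lambda\| < e^{-T}$, pass to a geometric statement about the surface $\zip(x)$ lying deep in the thin part, use that this persists for a flow time comparable to $T$, then invoke Athreya and unwind the suspension measure — is the right one and, as far as I can tell, the one that AGY follow.

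A few steps deserve flags, though none appears to be fatal. First, the bridge ``small renormalized base interval $\Longrightarrow$ short saddle connection'' is the genuine geometric content and is not as routine as the phrase ``standard correspondence'' suggests: one needs to know that the short transversal carries at least two visits of singular separatrices, so that a purely \emph{horizontal} saddle connection of length $\leq \|(\Theta_\gamma^*)^{-1}\lambda\|$ actually exists. Your subsequent claim that the saddle connection ``stretches as $e^t$'' is only correct because the connection is horizontal; for a generic direction the length is not monotone under $\T_t$, so the horizontality must be made explicit. Second, the measure conversion is glossed over: $\hat m$ is transverse, so $\zip_*\hat m$ is singular with respect to $\nu_{\C}$; the correct move is to flow-thicken $\{r > T\}$ to $\{r>T\}\times[0,T/2]\subset\hat\Xi_r$, observe that its $\hat m_r$-measure is $\tfrac{T}{2}\hat m\{r>T\}$, and map \emph{that} set into Athreya's exceptional set. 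This recovers the factor $\kappa'=\kappa/2$ you quote, but it needs to be said. Finally, strong positivity and neatness of $\gamma_0$ are used elsewhere to make $Z$ uniformly expanding Markov and $\hat Z$ hyperbolic; they are not what prevents the orbit from re-entering the compact core here — that is handled automatically by the monotone growth of the horizontal saddle connection under forward flow — so the closing sentence of your sketch attributes the wrong role to those hypotheses.
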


\subsection{The main technical results}

The following will be the precise version of Theorem \ref{main}.{\bf C}.
Recall the definition of $\C(q)$ from Section \ref{sub:Abelian-differentials-and}.
\begin{thm}
\label{thm:decay-of-correlations-precise.} 
There exists $\delta,\eta >0$ and $Q_0 \in \Z_+$ such that for all $q$ coprime to $Q_0$,
for all $u,v\in C^{1}(\C(q))$ whose supports project into a compact set $K \subset \M$, there exists $C=C(K)>0$ such that
for all $t\geq0$
\[
\left|\int u.v\circ\T_{t}\:d\nu_{\C(q)}-| \Gamma_q |^{-1} \left(\int u\:d\nu_{\C(q)}\right)\left(\int v\:d\nu_{\C(q)}\right)\right|\leq C(K) \|u\|_{C^{1}}\|v\|_{C^{1}}q^{\eta}e^{-\delta t}.
\]
\end{thm}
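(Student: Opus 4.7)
The strategy, outlined in the introduction, is to reduce the statement to a uniform-in-$q$ spectral gap for skew transfer operators, and then handle high and low frequency ranges of the spectral parameter separately.

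\emph{Reductions.} I would first use Lemma \ref{lem:We-have-the-following-correspondences} to split $u,v\in C^{1}(\C(q))$ into their lifts from $\C$ and their orthogonal complements in $L_{\star}^{2}(\C(q))$. The lifts contribute exactly the constant $|\Gamma_q|^{-1}(\int u)(\int v)$ together with an exponentially small remainder supplied by the base case in \cite{AGY}. For the orthogonal complements, the isometry $\Phi_q$ converts the correlation into one for sections of $H_{1}(\C;\rho_{q})$; after pulling back through the zippered rectangles mapping $\zip\circ P$ this becomes a correlation for the suspension flow $\hat T_{t}$ on $\hat{\Xi}_{r}$ of compactly supported $V$-valued $C^{1}$ observables, where $V=\ell_{0}^{2}(\Gamma_q)$, satisfying the equivariance \eqref{eq:compatibility-section}. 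Using the disintegration $\{\hat m_{y}\}$ from Lemma \ref{lem:-is-a-hyperbolic-skew-product}, the contraction in the stable direction (after a preliminary application of $\hat T_{t_{0}}^{*}$ with $t_{0}$ large) lets me fibre-integrate to obtain $C^{1}$ observables on $\Xi_{r}$, fully compatibly with the $\rho_{q}$-twist by \eqref{eq:pull-back-sections}--\eqref{eq:master-T_t-on-Xi-functions}. Thus it suffices to prove uniform exponential mixing of a $V$-valued suspension flow over the uniformly expanding Markov base $Z:\Xi\to\Xi$.

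\emph{Transfer operators.} Following \cite{AGY, OW, MOW}, I would take a Laplace transform of the correlation in $t$ to obtain a contour integral of the resolvent of the skew transfer operator
$$
\L_{s,q}\phi(y)=\sum_{\gamma}^{*}J(\alpha_{\gamma}(y))\,e^{-s\,r(\alpha_{\gamma}(y))}\,\rho_{q}\bigl((\Theta_{\gamma}^{*})^{-1}\bigr)\,\phi(\alpha_{\gamma}(y)),
$$
where $\alpha_{\gamma}$ is the inverse branch of $Z$ attached to a $\gamma_{0}$-adapted path $\gamma$ and $J$ is the inverse Jacobian from Proposition \ref{prop:The-map-is uniformly epxanding markov}. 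Goodness of $r$ (Lemma \ref{lem:The-roof-function-is-good}), its exponential tails (Theorem \ref{thm:The-roof-function-has-exponential-tails}), and the uniformly expanding Markov structure provide the Lasota-Yorke/Doeblin-Fortet framework for $\L_{s,q}$ on $C^{1}(\Xi,V)$. What remains is to produce $\delta,\eta_{1},\eta_{2}>0$ and $Q_{0}$ so that for all $q$ coprime to $Q_{0}$ and all $s$ with $\mathrm{Re}(s)\ge-\delta$ one has $\|\L_{s,q}^{n}\|_{C^{1}}\le C(1+|\mathrm{Im}(s)|)^{\eta_{1}}\,q^{\eta_{2}}\,e^{-\delta n}$ for all sufficiently large $n$.

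\emph{High/low frequency split and the main obstacle.} For $|\mathrm{Im}(s)|$ large (high frequency), the Dolgopyat oscillatory cancellation of \cite{AGY} for the scalar base transfer operator transfers verbatim to the twisted operator: unitarity of $\rho_{q}((\Theta_{\gamma}^{*})^{-1})$ on $V$ ensures that the branch-to-branch cancellation occurs at the scalar level and is preserved by the twist, giving $q$-uniform contraction as in \cite{OW, MOW}. The hard case is $|\mathrm{Im}(s)|$ bounded (low frequency). There I would iterate $\L_{s,q}$ some $N\asymp\log q$ times and, after smoothing in the base variable, view the resulting operator as convolution on $V=\ell_{0}^{2}(\Gamma_q)$ by a complex measure $\mu_{s,q}$ supported on the multiset of monodromies $\{\rho_{q}((\Theta_{\gamma_{1}\cdots\gamma_{N}}^{*})^{-1})\}$ obtained from concatenations of $\gamma_{0}$-adapted paths. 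By Theorem \ref{thm:zorich-conjecture} these monodromies surject onto $\Sp_{2g}(\Z/q\Z)$ for $q$ coprime to $Q_{0}=2$, and property (T) for $\Sp_{2g}(\Z)$ supplies a uniform spectral gap for the corresponding \emph{real} convolution operator on $\ell_{0}^{2}(\Gamma_q)$. The main obstacle is the passage from this real expander bound to a bound on the complex operator $\mu_{s,q}$; this requires the Bourgain-Kontorovich-Magee majorization trick from \cite[Appendix]{MOW}, applied level-by-level on the new subspaces $\ell_{\new}^{2}(\Gamma_{q'})$ for $q'\mid q$ using the quasirandomness Proposition \ref{prop:quasirandomness} to absorb representation-theoretic losses into the $q^{\eta}$ factor. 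Matching the combinatorial control of long Rauzy-Veech monodromies with quantitative Cayley-graph expansion and the multi-level decomposition across divisors of $q$ is the principal technical challenge.
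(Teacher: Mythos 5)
Your proposal follows essentially the same route as the paper: the $L^2_\star$-decomposition handles the main term via base mixing, $\Phi_q$ converts to $\rho_q$-twisted sections, the suspension model and disintegration reduce to the base $\Xi$, and a Laplace transform leads to $q$-uniform bounds on skew transfer operators split by frequency, with Dolgopyat (via \cite{OW,MOW}) for high frequency and the Bourgain--Gamburd--Sarnak/Bourgain--Kontorovich--Magee majorization plus quasirandomness for low frequency. One concrete slip: $Q_0 = 2$ is not enough. Theorem \ref{thm:zorich-conjecture} does give surjection onto $\Gamma_q$ for odd $q$, but the multi-level decomposition over divisors $q' \mid q$ (decomposing $\rho_q$ into $\rho^{q}_{q'}$-pieces and invoking the new-subspace bound level by level) requires every nontrivial divisor of $q$ to exceed a threshold $q_0$ fixed during the argument, so the paper must take $Q_0$ to be the product of all primes up to $q_0$; without this the pieces $f_{q'}$ with $1 < q' \le q_0$ would not be covered by the spectral bounds. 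You also compress the decoupling step (splitting the $N$-fold iterate into a length-$M$ head controlling the base point and a length-$\tilde M$ tail producing the convolution measure $\mu_{\gamma_1\ldots\gamma_M;y}$), which the paper carries out carefully to convert a single operator bound into a bound at every base point $y$; and the careful choice of $\gamma_0$ (so that all of $\gamma_0.\Upsilon$ is $\gamma_0$-adapted) is needed to see the generating set of $G_\pi$ inside the monodromies of length-one blocks. These details are the actual work in Sections 4.1--4.3, but the skeleton you describe is the correct one.
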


The key feature of this estimate is that $\delta$ does not depend on $q$.

For any Finsler manifold $X$ and Hilbert space $V$ we may define
the Banach space of $C^{1}$ $V$-valued functions on $X$ as in Section
\ref{sub:The-Teichmuller-flow}. Recall from Sections \ref{sub:Time-acceleration-and}
and \ref{sub:Koopman} that there are Finsler metric structures on
$\Xi,\hat{\Xi},\Xi_{r},\hat{\Xi}_{r}$. If $(\rho,V)$ is a unitary
representation we write e.g. $C^{1}(\Xi;\rho)$ for the $C^{1}$ $V$-valued
functions on $\Xi$, with respect to the Finsler metric. We make a
reduction of Theorem \ref{thm:decay-of-correlations-precise.} to
the following that is analogous to \cite[Theorem 2.7]{AGY}.
\begin{thm}
\label{thm:full-C^1-decay-of-correlations}
There exists $C,\delta,\eta >0$ and $Q_0 \in \Z_+$ such that for all $q$ coprime to $Q_0$, for all $U,V\in C^{1}(\hat{\Xi}_{r};\rho_{q})$
and all $t\geq0$ 
\[
\left|\int\langle U,T_{t}^{*}V\rangle d\hat{m}_{r}\Big\rangle\right|\leq C\|U\|_{C^{1}}\|V\|_{C^{1}}q^{\eta}e^{-\delta t}.
\]

\end{thm}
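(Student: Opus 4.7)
The plan is to follow the Avila--Gou\"{e}zel--Yoccoz strategy, which converts mixing of a hyperbolic skew-product into spectral estimates on transfer operators, but now done with twisted (vector-valued) operators so that $q$-uniform control can be extracted. The first step is to reduce to the expanding base $\Xi$. Given $U,V\in C^{1}(\hat{\Xi}_{r};\rho_{q})$, I would use the disintegration $\{\hat m_{y}\}$ from Lemma \ref{lem:-is-a-hyperbolic-skew-product}(3) to replace $U$ by its fibrewise average $\bar U$. The $K^{-1}$ stable-leaf contraction of Lemma \ref{lem:-is-a-hyperbolic-skew-product}(4) and the $C^{1}$-regularity of $\bar U$ guaranteed by Lemma \ref{lem:-is-a-hyperbolic-skew-product}(3) imply that, after running $T_{t}^{*}$ backwards for a fixed fraction of $t$, the correlations $\int\langle U,T_{t}^{*}V\rangle\,d\hat m_{r}$ and $\int\langle\bar U,T_{t}^{*}V\rangle\,dm_{r}$ differ by an exponentially small error. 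So it suffices to establish the analogous $q$-uniform decay for $T_{t}^{*}$ on $\Xi_{r}$ given by \eqref{eq:master-T_t-on-Xi-functions}.

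Next, I would take the Laplace transform of the reduced correlation in $t$. The exponential tails of $r$ (Theorem \ref{thm:The-roof-function-has-exponential-tails}) extend it to a strip $\Re(s)>-\sigma_{0}$, and unfolding \eqref{eq:master-T_t-on-Xi-functions} along $\gamma_{0}$-adapted inverse branches rearranges it as a geometric series in iterates of the \emph{twisted transfer operator}
\[
(\mathcal{L}_{s}^{\rho_{q}}f)(y)\;=\;\sum_{\gamma}^{*}J_{\gamma}(y)\,e^{-s\,r(\alpha_{\gamma}y)}\,\rho_{q}(\Theta_{\gamma}^{*})^{-1}\,f(\alpha_{\gamma}y),
\]
where $\alpha_{\gamma}$ are the inverse branches of $Z$ and $J_{\gamma}$ the corresponding Jacobians. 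A contour shift of the Laplace inversion from $\Re(s)=0$ to $\Re(s)=-\delta$ then yields the claimed $e^{-\delta t}$ decay provided one has $q$-uniform spectral bounds on $\mathcal{L}_{s}^{\rho_{q}}$ acting on $C^{1}(\Xi;\rho_{q})$ in both high- and low-frequency regimes of $\Im s$.

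For $|\Im s|$ large I would invoke the Dolgopyat-type estimate, Proposition \ref{prop:contraction-transfer-warped}: because $\rho_{q}$ is unitary, the oscillatory cancellations driving the Dolgopyat argument are furnished by the base dynamics, so the proof from \cite{AGY} for the scalar operator carries over verbatim with constants independent of $q$, as in \cite{OW,MOW}. For $|\Im s|$ bounded I would use the congruence input Proposition \ref{prop:Contraction-congruence-transfer}, proved along the lines of the Bourgain--Kontorovich--Magee method. A sufficiently high iterate $(\mathcal{L}_{s}^{\rho_{q}})^{N}$ reduces to convolution on $\Gamma_{q}$ by a complex measure $\mu$ whose Rauzy--Veech support surjects, by Theorem \ref{thm:zorich-conjecture}, onto every prime-to-$2$ congruence quotient; property~(T) for $\Sp_{2g}(\Z)$ then supplies the uniform Cayley-graph spectral gap on $\ell_{\mathrm{new}}^{2}(\Gamma_{q})$, and the quasirandomness bound Proposition \ref{prop:quasirandomness} lets one dominate the operator norm of $\mu$ by that of a real nonnegative majorant at a cost polynomial in $q$.

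Finally, assembling the two regimes in the contour shift, and optimizing a cutoff $|\Im s|\le T(q)$, gives the estimate $|\int\langle U,T_{t}^{*}V\rangle\,d\hat m_{r}|\le C\|U\|_{C^{1}}\|V\|_{C^{1}}q^{\eta}e^{-\delta t}$, where $\eta$ absorbs both the quasirandomness-majorization loss and the $L^{2}\to C^{1}$ Sobolev loss incurred in passing from convolution spectral bounds on $\Gamma_{q}$ to operator-norm bounds on $C^{1}(\Xi;\rho_{q})$. I expect the main obstacle to be exactly the low-frequency estimate: one has to extract from the symbolic coding of $\hat Z$ enough Rauzy--Veech monodromy elements to actually invoke Theorem \ref{thm:zorich-conjecture}, and then splice the resulting $\ell^{2}(\Gamma_{q})$-spectral-gap bound into the Dolgopyat-side $C^{1}$ framework uniformly in $q$, keeping every implicit constant at worst polynomial in $q$ so that the two regimes can be summed without destroying the exponential rate $\delta$.
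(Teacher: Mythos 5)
Your proposal follows essentially the same route as the paper: reduce from $\hat{\Xi}_r$ to the expanding base $\Xi_r$ via the fibrewise averaging supplied by the hyperbolic skew product structure (the paper cites \cite[Section 8]{AGY} and \cite{OW} for this step, reducing to Theorem \ref{thm:Decay of correlations roof over Markov}), then take the Laplace transform of the correlation, rewrite it as a geometric series in the twisted transfer operator $\L_{s,\rho_q}$, supply Dolgopyat-type high-frequency bounds (Proposition \ref{prop:contraction-transfer-warped}) and congruence/expander low-frequency bounds (Proposition \ref{prop:Contraction-congruence-transfer}), and finish by contour shift. Two small imprecisions: the fibrewise average $\bar U$ is not globally $C^1$ but only piecewise so across cylinder boundaries, which is why the paper works with the spaces $\B_0(\Xi_r;\rho)$ and $\B_1(\Xi_r;\rho)$ of Definitions \ref{def:A-function-B_0}--\ref{def:A-function-B_1} rather than $C^1(\Xi_r;\rho_q)$; and the threshold $T_0$ separating the high- and low-frequency regimes is $q$-independent, coming from Dolgopyat's argument rather than being optimized as a function of $q$.
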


We now explain how Theorem \ref{thm:decay-of-correlations-precise.}  reduces to Theorem \ref{thm:full-C^1-decay-of-correlations}.

\begin{proof}[Passage from Theorem \ref{thm:full-C^1-decay-of-correlations} to Theorem \ref{thm:decay-of-correlations-precise.} ]
Note that in the context of Theorem \ref{thm:decay-of-correlations-precise.}, we can write $u = \tilde{u}_0 + u'$, $v = \tilde{v}_0
 + v'$  with $u',v'\in L^{2}_\star(\C(q) )$ and $\tilde{u}_0, \tilde{v}_0$ given by lifts of functions from $\C$. In other words, if $\cover_q : \C(q) \to \C$ is the covering map, there are functions $u_0$ and $v_0$ such that $\tilde{u}_0 = u_0 \circ \cover_q$ and $\tilde{v}_0 = v_0 \circ \cover_q$. Since $\tilde{u}_0$ (resp. $\tilde{v}_0$) is obtained from $u$ (resp. $v$) by averaging over $\Gamma_q$, and  the Finsler metric on $\X(\kappa)$ is $\Gamma$-invariant, we have estimates
 $$
 \| \tilde{u}_0 \|_{C^1} = \| u_0 \|_{C^1}\leq \| u \|_{C^1},\quad  \| \tilde{v}_0 \|_{C^1} = \| v_0 \|_{C^1}\leq \| v \|_{C^1}
 $$ and hence also by the triangle inequality
 $$
  \| u' \|_{C^1} \leq 2\| u \|_{C^1}, \quad   \| v' \|_{C^1} \leq 2\| v \|_{C^1}.
 $$
Also note that $\int u' d\nu_{\C(q)} = \int v' d\nu_{\C(q)} = 0$. Moreover since the supports of $u$,$v$ project to $K$ in $\M$, the same holds for $\tilde{u}_0 , \tilde{v}_0, u' , v' , u_0 , v_0$.
 Since $\T_t$ preserves $L^2_\star( \C(q) )$ and its orthogonal complement, we have
 $$
 \int u.v\circ\T_{t}\:d\nu_{\C(q)} = \int \tilde{u}_0.\tilde{v}_0\circ\T_{t}\:d\nu_{\C(q)}+ \int u'.v'\circ\T_{t}\:d\nu_{\C(q)}.
 $$
 We can replace the first term by
 $$
 | \Gamma_q | \int u_0.v_0\circ\T_{t} d\nu_\C
 $$
 which by exponential mixing on $\C$ (\cite[Theorem 2.14]{AGY}) is for some $\delta' >0$
 $$
  | \Gamma_q | \left(\int u_0\:d\nu_{\C}\right)\left(\int v_0\:d\nu_{\C}\right) + O_K( |\Gamma_q\| \| u \|_{C^1} \| v \|_{C^1} e^{-\delta' t} ).
 $$
 Notice that since $\delta'$ depends only on $\C$ and for some $\eta>0$, $| \Gamma_q | \ll q^\eta $ for all $q$, the error term here is of the form as in Theorem \ref{thm:decay-of-correlations-precise.}. This also explains why the error term of Theorem \ref{thm:decay-of-correlations-precise.} must contain a $q^\eta$ factor.
 
 Since $\int u_0\:d\nu_{\C} = | \Gamma_q |^{-1} \int \tilde{u}_0 d\nu_{\C(q)}$ and similarly for $v_0 , \tilde{v}_0$, we have by putting the previous arguments together
\begin{align*}
  \int u.v\circ\T_{t}\:d\nu_{\C(q)} &=  | \Gamma_q |^{-1} \left(\int u\:d\nu_{\C(q)}\right)\left(\int v\:d\nu_{\C(q)}\right) +  O_K( q^\eta\| u \|_{C^1} \| v \|_{C^1} e^{-\delta' t} ) \\
  & + \int u'.v'\circ\T_{t}\:d\nu_{\C(q)}.
\end{align*}
 This reduces Theorem \ref{thm:decay-of-correlations-precise.} to the case of $u=u', v=v' \in L^2_\star( \C(q) )$. Now assume this is the case.
 
 We apply Lemma \ref{lem:We-have-the-following-correspondences} to obtain sections $u^* , v^*\in L^2(H_1( \C ; \rho_q))$ that have the same $C^1$ norms as $u$ and $v$.
 To apply Theorem  \ref{thm:full-C^1-decay-of-correlations} to the correlation function of $u^*$ and $v^*$ and conclude the proof, one needs to use the correspondence from Section \ref{sub:Koopman} to lift $u^*$ and $v^*$ to continuously differentiable $V$-valued functions $u^{**}$ and $v^{**}$ on $\hat{\Xi}_{r}$. However, $u^{**}$ and $v^{**}$ may not have bounded $C^1$ norms, because of distortion between the Finsler metric structures on $\hat{\Xi}_{r}$ and $\C$. So one needs to perform some `chopping' and `smoothing' to conclude the result and it is at this stage that the condition on the support of $u^*$ and $v^*$ must be used. One may obtain estimates for $L^p$ norms of $u^*$ and $v^*$ in terms of their $C^1$ norms and the compact set $K$. Once this is done, the rest of the argument is as in \cite[pp. 166-169]{AGY}. It applies in the same way to vector valued functions as to scalar valued functions.
\end{proof}

\subsection{Entrance of the transfer operator}

We now recall the definition of the spaces $\B_{0}$ and $\B_{1}$
from \cite{AGY}. 
\begin{defn}
\label{def:A-function-B_0}A function $U:\Xi_{r}\to V$ is in $\B_{0}(\Xi_{r};\rho)$
if it is bounded, continuously differentiable on each set 
\[
(\Xi_{r})_{\gamma\gamma_{0}}:=\{(y,t)\::\:y\in\Xi_{\gamma\gamma_{0}},\:t\in(0,r(y))\:\}\quad\gamma\text{ is \ensuremath{\gamma_{0}}-adapted}
\]
and also $\sup_{(y,t)\in\bigcup^{*}(\Xi_{r})_{\gamma\gamma_{0}}}\|DU(y,t)\|<\infty$.
Define the norm
\[
\|U\|_{\B_{0}(\Xi_{r};\rho)}:=\sup_{(y,t)\in\bigcup^{*}(\Xi_{r})_{\gamma\gamma_{0}}}\|U(y,t)\|+\sup_{(y,t)\in\bigcup^{*}(\Xi_{r})_{\gamma\gamma_{0}}}\|DU(y,t)\|.
\]

\end{defn}

\begin{defn}
\label{def:A-function-B_1}A function $U:\Xi_{r}\to V$ is in $\B_{1}(\Xi_{r};\rho)$
if it is bounded and there exists a constant $C>0$ such that for
all fixed $y\in\cup^{*}\Xi_{\gamma\gamma_{0}}$, the function $t\mapsto U(y,t)$
is of bounded variation\footnote{We make the obvious extension of bounded variation to $V$-valued
functions using the norm induced by the inner product on the Hilbert
space $V$.} on the interval $(0,r(y))$ and its variation $\Var_{(0,r(y))}(t\mapsto U(y,t))$
is bounded by $Cr(y)$. Let
\[
\|U\|_{\B_{1}}=\sup_{(y,t)\in\bigcup^{*}(\Xi_{r})_{\gamma\gamma_{0}}}\|U(y,t)\|+\sup_{y\in\bigcup^{*}(\Xi)_{\gamma\gamma_{0}}}\frac{\Var_{(0,r(y))}(t\mapsto U(y,t))}{r(y)}.
\]
\end{defn}

As in \cite{AGY} we reduce to decay of correlations for the $\rho$-skew
extension of $\Xi_{r}$ rather than $\hat{\Xi}_{r}$. 
\begin{thm}[Decay of correlations]
\label{thm:Decay of correlations roof over Markov} There exists $C,\delta,\eta >0$ and $Q_0 \in \Z_+$ such that for all $q$ coprime to $Q_0$,
for all $U\in\B_{0}(\Xi_{r};\rho_{q})$
and $V\in\B_{1}(\Xi;\rho_{q})$, for all $t\geq0$,
\[
\left|\int\langle U,T_{t}^{*}V\rangle dm_{r}\right|\leq C q^\eta \|U\|_{\B_{0}}\|V\|_{\B_{1}}e^{-\delta t}.
\]
\end{thm}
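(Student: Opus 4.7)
The plan is to follow the transfer-operator approach of Avila--Gou\"ezel--Yoccoz, but systematically tracked through the $\rho_q$-twisted setting, combining the ideas of Oh--Winter and Bourgain--Kontorovich--Magee that have been referenced in the introduction. First I would introduce, for each complex parameter $s$ with $\Re(s)$ near $0$, the twisted transfer operator $\L_{s,q}$ acting on suitable functions $f : \Xi \to \ell_0^2(\Gamma_q)$, defined on each inverse branch $\alpha_\gamma$ of $Z$ corresponding to a $\gamma_0$-adapted path $\gamma$ by
\[
(\L_{s,q} f)(y) \;=\; \sum_{\gamma}^{*} e^{-s\, r(\alpha_\gamma(y))}\, J(\alpha_\gamma(y))\, \rho_q(\Theta_\gamma^*)\, f(\alpha_\gamma(y)),
\]
where $J$ is as in Proposition \ref{prop:The-map-is uniformly epxanding markov}. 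The exponential tails of $r$ (Theorem \ref{thm:The-roof-function-has-exponential-tails}) guarantee that $\L_{s,q}$ is bounded on $C^1(\Xi;\rho_q)$ for $\Re(s)$ in a small strip around $0$, and the goodness of $r$ (Lemma \ref{lem:The-roof-function-is-good}) gives the bounded-distortion input needed to control $C^1$ norms after iteration.

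Second, I would express the Laplace transform
\[
\hat\Phi(s) \;=\; \int_0^\infty e^{-st} \int \langle U, T_t^* V\rangle\, dm_r\, dt,\qquad \Re(s)>0,
\]
of the correlation function in terms of $(I-\L_{s,q})^{-1}$ applied to an explicit $V$-valued function built from $U$ and $V$. This is done exactly as in \cite{AGY}: unfold the suspension integral using the roof $r$, iterate (\ref{eq:master-T_t-on-Xi-functions}) to decompose the integral over $t\geq 0$ into a sum over $\gamma_0$-adapted paths, and recognize the resulting series as a geometric sum in $\L_{s,q}$. The hypothesis $U\in\B_0$, $V\in\B_1$ is precisely what is needed so that the reduction from the hyperbolic skew product $\hat Z$ down to $Z$ via the disintegration in Lemma \ref{lem:-is-a-hyperbolic-skew-product} produces an image in $C^1(\Xi;\rho_q)$ with norm controlled by $\|U\|_{\B_0}\|V\|_{\B_1}$.

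Third, and this is the technical heart, I would establish uniform meromorphic extension of $(I-\L_{s,q})^{-1}$ to a strip $\Re(s) > -\delta$, with operator-norm bounds of the form $C\,q^{\eta}\,(1+|\Im s|)^{A}$. This splits into two regimes, as already flagged in the introduction. For $|\Im s|$ large, I would invoke the high-frequency contraction estimate of Proposition \ref{prop:contraction-transfer-warped} (a $q$-uniform Dolgopyat argument; the fact that this works uniformly in $q$ is the output of the machinery discussed in Section \ref{ssec:spectral}). For $|\Im s|$ bounded, I would invoke Proposition \ref{prop:Contraction-congruence-transfer}, whose proof rests on property (T) for $\Sp_{2g}(\Z)$ combined with the Rauzy--Veech density of Theorem \ref{thm:zorich-conjecture} and the quasirandomness estimate of Proposition \ref{prop:quasirandomness}; the latter lets one pass from majorizing real-valued measures on $\Gamma_q$ back to the complex-valued convolution operators that actually arise from the twisted transfer operator.

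Finally, to convert these resolvent bounds into exponential decay in $t$, I would perform a contour shift: choose a vertical contour at $\Re(s) = -\delta'$ for some $\delta' \in (0,\delta)$, pick up no residue (the projection onto constant sections has been removed because of the orthogonality of $\rho_q$ to the trivial representation), and estimate $\hat\Phi(-\delta'+i\xi)$ using the regime-by-regime bounds. The polynomial growth in $|\xi|$ is absorbed at the cost of losing a small amount of the exponential gain by a standard smoothing/interpolation argument (as in \cite{AGY}). The prefactor $q^\eta$ in the final estimate is exactly the polynomial dependence on $q$ inherited from Propositions \ref{prop:contraction-transfer-warped} and \ref{prop:Contraction-congruence-transfer}. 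The main obstacle I anticipate is ensuring that the two contraction regimes match up cleanly and that the loss of a derivative during the Laplace-to-time inversion is compatible with the mere $\B_0$ regularity allowed for $U$; this is handled using the bounded-variation hypothesis in Definition \ref{def:A-function-B_1} exactly as in \cite[Section 7]{AGY}, and the argument there extends verbatim to vector-valued functions since all estimates are fibrewise in $V = \ell_0^2(\Gamma_q)$ and depend on $V$ only through its Hilbert-space structure.
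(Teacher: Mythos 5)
Your proposal is correct and follows essentially the same route as the paper's proof: Laplace transform of the correlation function, unfolding against the $\rho_q$-twisted transfer operator $\L_{s,\rho_q}$, spectral bounds from Propositions \ref{prop:contraction-transfer-warped} and \ref{prop:Contraction-congruence-transfer} in the high- and low-frequency regimes respectively, and Laplace inversion along a contour in $\Re(s)<0$. Two small imprecisions worth flagging: the disintegration of Lemma \ref{lem:-is-a-hyperbolic-skew-product} plays no role inside this proof (it belongs to the earlier, separate reduction from Theorem \ref{thm:full-C^1-decay-of-correlations} to the present statement, since here $U$, $V$ already live over $\Xi_r$), and no extra smoothing or interpolation step is needed to handle polynomial growth in $|\Im s|$, because integrating by parts in the flow direction to form $\hat U_{-s}$, $\hat V_s$, combined with the warped norm $\|\cdot\|_{1,t}$ of Proposition \ref{prop:contraction-transfer-warped}, already yields $\hat I(s)=O\bigl(q^{\eta}/(1+|\Im s|^2)\bigr)$ in the strip, so the contour integral converges absolutely.
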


This is proved for scalar valued functions in \cite[Theorem 7.3]{AGY}.
The key point of Theorem \ref{thm:Decay of correlations roof over Markov}
is the uniformity in $q$. The passage from Theorem \ref{thm:Decay of correlations roof over Markov}
to Theorem \ref{thm:full-C^1-decay-of-correlations} is handled as
in \cite[Section 8]{AGY}. In fact, the arguments of \cite[Section 8]{AGY}
are followed closely and extended to the skew setting by Oh and Winter
in \cite[Proof of Theorem 1.5]{OW}. So we have presently explained
the reduction of Theorem \ref{thm:decay-of-correlations-precise.}
to Theorem \ref{thm:Decay of correlations roof over Markov} whose
proof we now take up.

From now on, all integrals are taken with respect to the relevant
Lebesgue measure. Following \cite{AGY} let
\[
A_{t}=\{(y,a)\in\Xi_{r}\::\:a+t\geq r(y)\}
\]
and $B_{t}=\Xi_{r}\backslash A_{t}$. We bound 
\[
\int_{B_{t}}\langle U,T_{t}^{*}V\rangle\leq\|U\|_{\B_{0}}\|V\|_{\B_{1}}\int_{y\in\Xi}\max(r(y),0)\leq\|U\|_{\B_{0}}\|V\|_{\B_{1}}\int_{y:r(y)\geq t}r(y).
\]
By the Cauchy-Schwarz inequality and that $r$ has exponential tails (Theorem
\ref{thm:The-roof-function-has-exponential-tails}) the above contributes
$\leq C'\|U\|_{\B_{0}}\|V\|_{\B_{1}}\exp(-\delta't)$ for some $\delta'>0$
and $C'>0$ that do not depend on $U,V$ or $\rho$. Therefore the
proof of Theorem \ref{thm:Decay of correlations roof over Markov}
reduces to estimating the quantity
\[
I(t):=\int_{A_{t}}\langle U,T_{t}^{*}V\rangle
\]
on the order of
\begin{equation}
I(t)\leq C q^\eta \|U\|_{\B_{0}}\|V\|_{\B_{1}}\exp(-\delta t)\label{eq:I(t) bound}
\end{equation}
for some absolute constants $C,\delta,\eta>0.$

We now begin the proof of (\ref{eq:I(t) bound}). We will estimate
the Laplace transform 
\begin{equation}
\hat{I}(s):=\int_{0}^{\infty}\exp(-st)I(t)dt.\label{eq:Laplace-transform-defn}
\end{equation}
This is convergent for $\Re(s)>0$ since $I$ is bounded using the
finiteness of $m_{r}$. The estimation of $\hat{I}_{s}(t)$ is closely
related to certain skew transfer operators as follows. Using notation
of \cite{AGY}, if $F:\Xi_{r}\to V$ and $s\in\mathbf{C}$, let 
\[
\hat{F}_{s}(y):=\int_{0}^{r(y)}F(y,\tau)\exp(-s\tau)d\tau.
\]
 Then following the proof of \cite[Lemma 7.17]{AGY} and adapting
to our $\rho$-skew setting we have
\begin{eqnarray}
\hat{I}(s) & = & \int_{y\in\Xi}\int_{\tau=0}^{r(y)}\int_{t+\tau\geq r(y)}e^{-st}\langle U(y,\tau),[T_{t}^{*}V](y,\tau)\rangle dtd\tau dy\nonumber \\
 & = & \sum_{k=1}^{\infty}\int_{y\in\Xi}\int_{\tau=0}^{r(y)}\int_{\tau'=0}^{r(Z^{k}y)}e^{-s(r^{(k)}(y)+\tau'-\tau)}\langle U(y,\tau),[T_{t}^{*}V](y,\tau)\rangle d\tau'd\tau dy.\label{eq:Ihat-calc}
\end{eqnarray}
The manipulation above follows from writing for each $y$, $t+\tau=r^{(k)}(y)+\tau'$
with $\tau'\in[0,r(Z^{k}x))$. For each $y$ and $t$ there is a unique
$k$ and $\tau'$ for which this is possible. Supposing more specifically
that $y\in\Xi_{\gamma_{1}.\ldots.\gamma_{k}\gamma_{0}}$with each
$\gamma_{i}$ $\gamma_{0}$-adapted, we get from (\ref{eq:master-T_t-on-Xi-functions})
that 
\begin{equation}
[T_{t}^{*}V](y,\tau)=\rho(\Theta_{\gamma_{1}}^{*})\rho(\Theta_{\gamma_{2}}^{*})\ldots\rho(\Theta_{\gamma_{k}}^{*}).V(Z^{k}(y),\tau').\label{eq:T-t^*V expr}
\end{equation}
Inserting this into (\ref{eq:Ihat-calc}) gives that (throwing out
a measure zero set)
\begin{eqnarray}
\hat{I}(s) & = & \sum_{k=1}^{\infty}\sum_{\gamma_{1},\ldots,\gamma_{k}}^{*}\int_{y\in\Xi_{\gamma_{1}.\ldots.\gamma_{k}\gamma_{0}}}\int_{\tau=0}^{r(y)}\int_{\tau'=0}^{r(Z^{k}y)}e^{-s(r^{(k)}(y)+\tau'-\tau)}\langle U(y,\tau),\eqref{eq:T-t^*V expr}\rangle d\tau'd\tau dy\nonumber \\
 & = & \sum_{k=1}^{\infty}\sum_{\gamma_{1},\ldots,\gamma_{k}}^{*}\int_{y\in\Xi_{\gamma_{1}.\ldots.\gamma_{k}\gamma_{0}}}e^{-sr^{(k)}(y)}\langle\hat{U}_{-s}(y),\rho(\Theta_{\gamma_{1}}^{*})\rho(\Theta_{\gamma_{2}}^{*})\ldots\rho(\Theta_{\gamma_{k}}^{*})\hat{V}_{s}(Z^{k}(y))\rangle dy.\nonumber\\
 \label{eq:Ihat-calc2}
\end{eqnarray}
Here, we write a $\sum^{*}$ to indicate that the $\gamma_{i}$ being
summed over are all $\gamma_{0}$-adapted. The expression (\ref{eq:Ihat-calc2})
is best understood by the \emph{skew transfer operator }that we now
introduce. Recall that $y\in\Xi$ can be written $y=(\pi,\lambda)$.
The inverse branches of $Z$ are indexed by $\gamma_{0}$-adapted
$\gamma$ and are given explicitly by 
\begin{equation}
\alpha_{\gamma}:(\pi,\lambda)\mapsto\left(\pi,\frac{\Theta_{\gamma}^{*}\lambda}{\|\Theta_{\gamma}^{*}\lambda\|}\right),\quad\Xi\to\Xi_{\gamma\gamma_{0}}.\label{eq:alpha-defn}
\end{equation}
The skew transfer operator $\L_{s,\rho}$ is defined for arbitrary
unitary $(\rho,V)$ and $f:\Xi\to V$ by
\[
\L_{s,\rho}[f](y):=\sum_{\gamma}^{*}e^{-sr\circ\alpha_{\gamma}(y)}J\circ\alpha_{\gamma}(y)\rho(\Theta_{\gamma}^{*})^{-1}.f\circ\alpha_{\gamma}(y).
\]
Recall that $J$ is the inverse of the Jacobian of $Z$ w.r.t. Lebesgue
measure. By results of \cite{AGY} the summation involved in $\L_{s,\rho}$
is convergent
(cf. Theorem \ref{thm:RPF} and the discussion afterwards). With the
operator $\L_{s,\rho}$ in hand, by making a change of variables of
the form $y\mapsto Z^{k}(y)$ one obtains from (\ref{eq:Ihat-calc2})

\begin{equation}
\hat{I}(s)=\sum_{k=1}^{\infty}\int_{y\in\Xi}\langle\L_{s,\rho}^{k}[\hat{U}_{-s}](y),\hat{V}_{s}(y)\rangle dy, \quad \Re(s)>0 \label{eq:Ihat-spectral}
\end{equation}
It is clear from inspection of the above that spectral bounds for
the operator $\L_{s,\rho}$ will be helpful in estimating $\hat{I}$. More precisely, we will aim to analytically continue $\hat{I}(s)$ to a strip $\Re(s) > -\sigma'$ with $\sigma' >0$.

\subsection{Spectral bounds for transfer operators}\label{ssec:spectral}
It will be useful at times to 
compare $\L_{s,\rho}$ to the operator on scalar functions on $\Xi$ given by

\[
\L_{s}[f](y):=\sum_{\gamma}^{*}e^{-sr\circ\alpha_{\gamma}(y)}J\circ\alpha_{\gamma}(y)f(\alpha_{\gamma}(y))
\]
that features in \cite[formula (7.13)]{AGY}. Recall that $\sigma_{0}$
is such that $\int\exp(\sigma_{0}r)dm<\infty$ given by Theorem \ref{thm:The-roof-function-has-exponential-tails}.
The following is given in \cite[pg. 188]{AGY}.
\begin{thm}
\label{thm:RPF}There
is some $0<\sigma_{1}<\sigma_{0}$ such that for $s$ with $|\Re(s)|<\sigma_{1}$,
$\L_{\sigma}$ is a bounded operator on $C^{1}(\Xi)$. Moreover we
have the following properties after suitable choice of $\sigma_{1}$:
\begin{enumerate}
\item $\L_{0}$ has a simple eigenvalue at $1$ and the rest of the spectrum
of $\L_{0}$ is contained in a ball around $0$ of radius $<1$.
\item For real $\sigma$ with $|\sigma|<\sigma_{1}$ the largest eigenvalue
$\lambda_{\sigma}$ of $\L_{\sigma}$ is simple and varies real analytically in
$\sigma$. In particular for all $\eta>0$ there is $\sigma_{2}(\eta)>0$
such that for real $\sigma$ with $|\sigma|\leq\sigma_{2}$ we have
$e^{-\eta} < \lambda_{\sigma}\leq e^{\eta}.$
\item The corresponding  eigenfunctions $h_{\sigma}$ (normalized so $\int h_{\sigma}=1$)
are positive and also vary real analytically as $C^{1}(\Xi)$-valued
functions on $(-\sigma_{1},\sigma_{1}).$ The functions $h_{\sigma}$
are uniformly bounded below when $|\sigma|\leq\sigma_{1}$.
\end{enumerate}
\end{thm}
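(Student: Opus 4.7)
The plan is to prove Theorem \ref{thm:RPF} by establishing a Lasota--Yorke inequality for $\L_\sigma$ on $C^1(\Xi)$, deducing quasi-compactness, identifying the leading eigenvalue via Perron--Frobenius theory, and then invoking Kato's analytic perturbation theory for the dependence on $\sigma$. Throughout, the structural inputs are uniform expansion of $Z$ (Proposition \ref{prop:The-map-is uniformly epxanding markov}), the goodness of $r$ (Lemma \ref{lem:The-roof-function-is-good}), and the exponential tails (Theorem \ref{thm:The-roof-function-has-exponential-tails}); no further structure of $\Xi$ is needed.

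First I would check that $\L_s$ is a bounded operator on $C^1(\Xi)$ for $|\Re(s)|<\sigma_1$ provided $\sigma_1<\sigma_0$ is small enough. Boundedness of $\|\L_s f\|_\infty$ follows from summing $e^{-\Re(s)r\circ\alpha_\gamma}J\circ\alpha_\gamma$: since $J\circ\alpha_\gamma$ gives the measure of $\Xi_{\gamma\gamma_0}$ (up to bounded distortion from Proposition \ref{prop:The-map-is uniformly epxanding markov}.\ref{enu:uniformly-expanding-logJ-deriv}), a standard Chernoff-type estimate using exponential tails gives $\sum_\gamma^* e^{-\Re(s)r\circ\alpha_\gamma}J\circ\alpha_\gamma<\infty$ for $|\Re(s)|<\sigma_1$. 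For the $C^1$ part one differentiates term-by-term; each derivative of $e^{-sr\circ\alpha_\gamma}J\circ\alpha_\gamma f\circ\alpha_\gamma$ produces factors controlled by $\|D(r\circ\alpha_\gamma)\|,\|D(\log J\circ\alpha_\gamma)\|$ (bounded uniformly by Lemma \ref{lem:The-roof-function-is-good}.\ref{enu:rooffunctiongoodbounded-deriv} and Proposition \ref{prop:The-map-is uniformly epxanding markov}.\ref{enu:uniformly-expanding-logJ-deriv}), and a contracting factor $\|D\alpha_\gamma\|\leq\Lambda^{-1}$ from uniform expansion.

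Next I would iterate this to obtain a Lasota--Yorke inequality of the form
\[
\|\L_\sigma^n f\|_{C^1}\leq C_1\Lambda^{-n}\|f\|_{C^1}+C_2(\sigma)\|f\|_\infty,
\]
for real $\sigma$ with $|\sigma|<\sigma_1$, where $C_2(\sigma)$ is bounded locally in $\sigma$. This follows because iterating the transfer operator produces inverse branches indexed by $n$-fold concatenations $\gamma_1\cdots\gamma_n$, and bounded distortion of $\log J\circ\alpha_{\gamma_1\cdots\gamma_n}$ (telescoping the one-step estimate) together with the uniform contraction $\Lambda^{-n}$ yields the estimate. Combined with the fact that $\L_\sigma$ is compact from $C^1(\Xi)$ to $C^0(\Xi)$ (Arzel\`a--Ascoli applied to uniformly bounded, uniformly equicontinuous images; the exponential tails ensure the sum converges uniformly), the Ionescu-Tulcea--Marinescu theorem then gives quasi-compactness of $\L_\sigma$ on $C^1(\Xi)$ with essential spectral radius $\leq c\Lambda^{-1}$ for some constant $c$.

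For part (1) I would then apply the Perron--Frobenius/Ruelle argument at $\sigma=0$: $\L_0$ is a positive operator preserving $\int f\,dm$ (since $\L_0$ is the $L^2(m)$-adjoint of composition with $Z$), its spectral radius is $1$, and $Z$ is ergodic and mixing (the Markov structure together with uniform expansion and the fact that $Z$ is combinatorially mixing gives this). Standard arguments then force the leading eigenvalue $1$ to be simple with constant eigenfunction $h_0\equiv 1$, and all other spectrum lies in a disk of radius $<1$. For parts (2) and (3), I would appeal to Kato's analytic perturbation theory: $\sigma\mapsto\L_\sigma$ is a holomorphic family of bounded operators on $C^1(\Xi)$ for $|\Re(s)|<\sigma_1$ (termwise differentiability follows from the exponential tails and dominated convergence, and the sums of derivatives converge absolutely), so the simple isolated leading eigenvalue $\lambda_\sigma$ and its eigenprojection $P_\sigma$, hence $h_\sigma$ once normalized by $\int h_\sigma\,dm=1$, extend analytically to a neighborhood of $0$. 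Positivity of $h_\sigma$ for small $|\sigma|$ and the uniform lower bound follow by $C^0$-continuity from $h_0\equiv 1$, after possibly shrinking $\sigma_1$; similarly $\lambda_\sigma\in(e^{-\eta},e^\eta)$ on $|\sigma|\leq\sigma_2(\eta)$ by continuity of $\sigma\mapsto\lambda_\sigma$ at $0$.

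The main obstacle is verifying the Lasota--Yorke inequality with constants independent of $n$, because the Markov map here is countable (infinitely many branches $\gamma_0$-adapted $\gamma$); one has to combine the bounded distortion estimates with the exponential tails of $r$ uniformly over all inverse branch compositions, and ensure the iterated distortion sum $\sum_{\gamma_1,\dots,\gamma_n}^* e^{-\sigma r^{(n)}}J^{(n)}$ stays bounded for $\sigma$ in a complex neighborhood of $0$. This is precisely the place where the non-uniform nature of the Teichm\"uller dynamics manifests itself, and it is only overcome because of Theorem \ref{thm:The-roof-function-has-exponential-tails}.
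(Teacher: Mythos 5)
The paper does not prove this theorem itself: immediately above the statement it says ``The following is given in \cite[pg.\ 188]{AGY}'', so the theorem is imported verbatim from Avila--Gou\"ezel--Yoccoz and no proof is offered. Your proposal therefore supplies an argument where the paper gives only a citation. That said, the outline you give (Lasota--Yorke inequality built from uniform expansion and exponential tails, quasi-compactness via Ionescu-Tulcea--Marinescu/Hennion, positivity and simplicity of the leading eigenvalue at $\sigma=0$ from the full-branch Markov structure, and Kato analytic perturbation for the dependence on $\sigma$) is exactly the standard Ruelle--Perron--Frobenius machinery that AGY invoke, so in substance the two routes coincide.

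One claim in your write-up needs tightening. You assert that ``no further structure of $\Xi$ is needed'' beyond Proposition \ref{prop:The-map-is uniformly epxanding markov}, Lemma \ref{lem:The-roof-function-is-good}, and Theorem \ref{thm:The-roof-function-has-exponential-tails}. But the step where you apply Arzel\`a--Ascoli to deduce that $\L_\sigma$ (or the injection $C^1(\Xi)\hookrightarrow C^0(\Xi)$) is compact is not automatic for an arbitrary open domain with a countable partition: it relies on the geometric control that comes from $\Xi$ being a John domain with finite diameter in the Hilbert metric, which is \cite[Lemma 4.4]{AGY} (and is used elsewhere in this paper, e.g.\ in the remark before Lemma \ref{lem:dc1}). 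Without that input the equicontinuity argument does not close. Similarly, the claim that $Z$ is ``combinatorially mixing'' should be stated as: each $\gamma_0$-adapted branch maps its cylinder onto all of $\Xi$, i.e.\ $Z$ is a full-branch map, which is what forces the leading eigenvalue of $\L_0$ to be simple with a spectral gap. These are small repairs; with them included, your proof is a correct and essentially complete reconstruction of the result the paper cites.
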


As a corollary to Theorem \ref{thm:RPF} we may note that for real
$\sigma$ with $|\sigma|<\sigma_{1}$, the infinite sum
\[
\sum_{\gamma}^* e^{-\sigma r\circ\alpha_{\gamma}(y)}J\circ\alpha_{\gamma}(y)=\L_{\sigma}[1](y)
\]
 converges to a $C^{1}$ function of $y\in\Xi$. Moreover (see \cite[Paragraph following Prop. 7.8]{AGY}) since for $\sigma < \sigma_1$, $\L_\sigma$ is a continuous perturbation of $\L_0$, by possibly decreasing $\sigma_1$, we can ensure the sum above is uniformly bounded for all $y\in \Sigma$ and all $\sigma\in (-\sigma_1, \sigma_1)$. This will be useful later.

We now give spectral estimates for $\L_{s,\rho}$ in two regimes: for large imaginary part of $s$ (corresponding to high frequency aspects of the dynamics) and small (bounded) imaginary part of $s$ (corresponding to low frequencies).

\textbf{a. $|\Im(s)|\gg1$}. Here we give spectral bounds for transfer
operators $\L_{s,\rho}$, where $\rho$ is an arbitrary unitary representation,
that come from the method of Dolgopyat \cite{DOLG}. In the case of
scalar valued functions on $\Xi$ these bounds were obtained by Avila,
Gouëzel and Yoccoz in \cite{AGY} by adapting Dolgopyat's argument
to the Teichmüller setting. 

To state the next result we introduce the warped norm on $C^{1}(\Xi;\rho)$
by
\[
\|u\|_{1,t}=\sup_{y\in\Xi}\|u(y)\|+\frac{1}{\max(1,|t|)}\sup_{y\in\Xi}\|Du(y)\|.
\]
\begin{prop}
\label{prop:contraction-transfer-warped} There is $\sigma'_{0}\leq\sigma_{0}$,
$T_{0}>0,C>0$ and $\beta<1$ such that for all $s=\sigma+it$ with
$|\sigma|\leq\sigma_{0}'$ and $|t|\geq T_{0}$, for any unitary $(\rho,V)$,
$u\in C^{1}(\Xi;\rho)$ and for all $k\in\mathbf{N}$
\[
\|\L_{s,\rho}^{k}u\|_{L^{2}(\Xi)}\leq C\beta^{k}\|u\|_{1,t}.
\]
\end{prop}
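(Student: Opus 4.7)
The plan is to adapt the Dolgopyat-style argument that Avila–Gou\"{e}zel–Yoccoz carry out for the scalar transfer operator $\L_s$, and to observe that it goes through unchanged for the $\rho$-twisted operator $\L_{s,\rho}$ because $\rho$ is unitary. All of the base dynamics input is provided by the excerpt: $Z$ is uniformly expanding Markov with bounded distortion (Proposition \ref{prop:The-map-is uniformly epxanding markov}), the roof function $r$ is good, hence has bounded derivatives along inverse branches and is not a $C^1$-coboundary modulo locally constant functions (Lemma \ref{lem:The-roof-function-is-good}), and $r$ has exponential tails (Theorem \ref{thm:The-roof-function-has-exponential-tails}). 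These are precisely the ingredients AGY use to construct pairs of inverse branches of $Z^N$ whose temporal distortion $r\circ\alpha-r\circ\beta$ has non-degenerate derivative on a positive-measure region, which is the heart of the Dolgopyat non-integrability input.

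The first step is a Lasota--Yorke bound for $\L_{s,\rho}$. For $u\in C^1(\Xi;\rho)$, unitarity of $\rho$ gives $\|\rho(\Theta_\gamma^*)^{-1}u(\alpha_\gamma y)\|=\|u(\alpha_\gamma y)\|$, hence pointwise
\[
\|\L_{s,\rho}u(y)\|\leq \L_\sigma |u|\,(y),\qquad \|D(\L_{s,\rho}u)(y)\|\leq C\bigl(|t|\,\L_\sigma\|u\|\,(y)+\L_\sigma\|Du\|\,(y)\bigr),
\]
with $C$ coming from the uniform bounds on $D(r\circ\alpha)$ and $D(\log J\circ\alpha)$ in Lemma \ref{lem:The-roof-function-is-good} and Proposition \ref{prop:The-map-is uniformly epxanding markov}. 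Iterating and using the warped norm $\|\cdot\|_{1,t}$ yields the usual stability of $\|\L_{s,\rho}^k u\|_{1,t}$ in terms of $\|u\|_{1,t}$ and of $\L_\sigma^k$ acting on scalars, exactly as for $\L_s$. By Theorem \ref{thm:RPF}, the scalar side is under control uniformly in $\sigma$ for $|\sigma|\leq\sigma'_0\leq\sigma_1$.

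The second step is the Dolgopyat $L^2$-contraction. Following AGY one fixes $N$ large, expands
\[
\|\L_{s,\rho}^N u(y)\|^2=\sum_{\gamma,\delta}e^{-\bar{s}r^{(N)}\circ\alpha_\gamma(y)-s r^{(N)}\circ\alpha_\delta(y)} J^{(N)}(\alpha_\gamma y) J^{(N)}(\alpha_\delta y)\langle \rho(\Theta_\gamma^*)^{-1}u(\alpha_\gamma y),\rho(\Theta_\delta^*)^{-1}u(\alpha_\delta y)\rangle,
\]
and bounds the inner product by Cauchy--Schwarz as $\|u(\alpha_\gamma y)\|\,\|u(\alpha_\delta y)\|$, killing all dependence on $\rho$. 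The surviving oscillatory factor is $e^{-it(r^{(N)}\circ\alpha_\gamma-r^{(N)}\circ\alpha_\delta)}$, which is exactly the object controlled in AGY by the UNI/non-cohomology property together with an $L^2$ integration-by-parts argument on the high frequency. This produces, for all sufficiently large $|t|$ and all $|\sigma|\leq\sigma'_0$, a constant $\beta_0<1$ and a Dolgopyat inequality
\[
\|\L_{s,\rho}^N u\|_{L^2}^2\leq \beta_0^N\,\|u\|_{1,t}^2,
\]
combined with the Lasota--Yorke bound on $\|\L_{s,\rho}^N u\|_{1,t}$ in terms of $\|u\|_{1,t}$. Standard iteration (AGY, Section 7) then upgrades this to $\|\L_{s,\rho}^k u\|_{L^2}\leq C\beta^k\|u\|_{1,t}$ with $\beta<1$ independent of $\rho$.

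The main obstacle is purely bookkeeping rather than conceptual: one must verify that every cancellation step in AGY's Dolgopyat argument uses only $|\langle\rho(\Theta_\gamma^*)^{-1}u,\rho(\Theta_\delta^*)^{-1}v\rangle|\leq\|u\|\,\|v\|$ rather than any actual evaluation of $\rho$. Once this is checked, the vector-valued argument is literally the scalar AGY argument applied to $\|u\|$ and $\|Du\|$; this is the observation behind Oh--Winter and Magee--Oh--Winter, and is the reason the excerpt merely remarks that the same proof works in the present setting.
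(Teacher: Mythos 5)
Your overall framing matches the paper's: the paper states Proposition~\ref{prop:contraction-transfer-warped} without proof and defers to \cite{OW} and \cite{MOW}, remarking that Dolgopyat's argument transfers to skew transfer operators twisted by a unitary cocycle that is constant on Markov cylinders. Your Lasota--Yorke step and your list of dynamical inputs are correct. The problem is your description of the cancellation step. You expand $\|\L_{s,\rho}^N u(y)\|^2$ as a double sum over branch pairs and claim that bounding the inner product by Cauchy--Schwarz "kills all dependence on $\rho$" while the oscillatory factor $e^{-it(r^{(N)}\circ\alpha_\gamma-r^{(N)}\circ\alpha_\delta)}$ survives. This cannot work: replacing $\langle \rho(\Theta_\gamma^*)^{-1}u(\alpha_\gamma y),\rho(\Theta_\delta^*)^{-1}u(\alpha_\delta y)\rangle$ by $\|u(\alpha_\gamma y)\|\,\|u(\alpha_\delta y)\|$ is an upper bound only on the \emph{modulus} of each summand, so to keep an inequality for the sum you must also take absolute values of the exponential prefactor, and the oscillation disappears along with $\rho$. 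What remains is the non-contractive estimate $\|\L_{s,\rho}^N u\|^2\leq(\L_\sigma^N\|u\|)^2$, which is no better than the Lasota--Yorke bound. Your closing remark, that every cancellation step in AGY uses only $|\langle\rho(\Theta_\gamma^*)^{-1}u,\rho(\Theta_\delta^*)^{-1}v\rangle|\leq\|u\|\,\|v\|$, is wrong for the same reason: if the argument needed only magnitudes of cross terms there would be no gain to extract.

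The actual mechanism in \cite{OW} and \cite{MOW} retains the phase of the cross term $\langle a,b\rangle$, with $a=e^{-sr^{(N)}\circ\alpha}\,J^{(N)}\!\circ\alpha\,\rho(\Theta_\alpha^*)^{-1}u\circ\alpha$ and $b$ the analogue for $\beta$. That phase is $t\,\bigl(r^{(N)}\circ\alpha - r^{(N)}\circ\beta\bigr)(y) + \phi(y)$ with $\phi(y)=\arg\langle \rho(\Theta_\alpha^*)^{-1}u(\alpha y),\rho(\Theta_\beta^*)^{-1}u(\beta y)\rangle$. The point of local constancy of the cocycle is that $\rho(\Theta_\alpha^*)^{-1}$ and $\rho(\Theta_\beta^*)^{-1}$ contribute nothing to $D\phi$, so $\phi$ varies slowly (in the regime $\|Du\|\lesssim|t|$ forced by the warped norm, $D(u\circ\alpha)=O(\Lambda^{-N}|t|)$ by the uniform expansion, and hence $\phi$ changes by $O(\Lambda^{-N})$ over the relevant length scale $\sim 1/|t|$), while the non-cohomology property of $r$ in Lemma~\ref{lem:The-roof-function-is-good} forces $t\,(r^{(N)}\circ\alpha-r^{(N)}\circ\beta)$ to change by a definite amount over that scale for a well-chosen branch pair. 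For $N$ large and $|t|\geq T_{0}$ the roof term dominates, so shifting $y$ by $\sim1/|t|$ makes $\Re\langle a,b\rangle\leq 0$ and yields the pointwise gain $\|a+b\|\leq(1-\kappa)(\|a\|+\|b\|)$. This slow-versus-fast phase comparison, not a Cauchy--Schwarz bound on the cross term, is the substance of "the locally constant cocycle does not interfere with the oscillations"; it is the step your proposal needs to supply.
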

The version of Proposition \ref{prop:contraction-transfer-warped}
with no twist by $\rho$ can be found in \cite[Proposition 7.7]{AGY}.
We prove Proposition \ref{prop:contraction-transfer-warped} in  Section \ref{sec:dolg}.

\noindent \textbf{b. $|\Im(s)|\ll1.$ }Here we give spectral bounds
for $\L_{s,\rho_{q}}$ that are good when $|\Im(s)|$ is below a fixed
constant. 
\begin{prop}
\label{prop:Contraction-congruence-transfer} Let $s=\sigma+it$.
For all $t_{0}>0$
there are constants $C,\eta,\epsilon>0$, $Q_0\in\Z_+$, and $0<\sigma'_{1}<\sigma_{0}$
such that when $|\sigma|<\sigma'_{1}$ and $|t|<t_{0}$ then for all
$u\in C^{1}(\Xi;\rho_q)$, all $k\in\mathbf{N}$, all $q$  coprime to $Q_0$,
\[
\|\L_{s,\rho_{q}}^{k}u\|_{C^{1}}\leq C(1-\epsilon)^{k}q^{\eta}\|u\|_{C^{1}}.
\]
\end{prop}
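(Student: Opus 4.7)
The plan is to adapt the Bourgain--Kontorovich--Magee method from \cite[Appendix]{MOW} to the present skew transfer operator. The two external inputs are: (i) Theorem \ref{thm:zorich-conjecture}, which, combined with property (T) for $\Sp_{2g}(\Z)$, gives property ($\tau$) and therefore a uniform family of expander Cayley graphs on $\Gamma_q$ (for $q$ coprime to a fixed $Q_0$) built from the Rauzy--Veech generators $\Theta_\gamma^*$; and (ii) the quasirandomness bound of Proposition \ref{prop:quasirandomness} for the dimensions of new irreducible representations of $\Gamma_q$.

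First I would reduce to the case $s=0$. From the estimates on $r$ and $\log J$ (Proposition \ref{prop:The-map-is uniformly epxanding markov}.\ref{enu:uniformly-expanding-logJ-deriv} and Lemma \ref{lem:The-roof-function-is-good}.\ref{enu:rooffunctiongoodbounded-deriv}) together with the absolute convergence of the defining series for $\L_{\sigma}[1]$ guaranteed by Theorem \ref{thm:RPF}, the map $s\mapsto \L_{s,\rho_q}$ is continuous in operator norm on $C^1(\Xi;\rho_q)$ on the compact region $|\sigma|\leq \sigma'_1,\ |t|\leq t_0$, \emph{uniformly in $q$} (the twist $\rho_q(\Theta_\gamma^*)^{-1}$ is an isometry and so does not enter). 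It therefore suffices to establish the estimate at $s=0$ with a slightly better contraction, and then extend to all $s$ in the region by perturbation.

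At $s=0$, unwinding the definition of $\L_{0,\rho_q}$, the $k$-th iterate has the form
\[
\L_{0,\rho_q}^k u(y)=\sum_{\gamma_1,\ldots,\gamma_k}^{*} w_k(\gamma_\bullet;y)\,\rho_q\bigl((\Theta_{\gamma_k}^{*}\cdots\Theta_{\gamma_1}^{*})^{-1}\bigr)\,u\bigl(\alpha_{\gamma_k}\cdots\alpha_{\gamma_1}(y)\bigr),
\]
with positive scalar weights $w_k$ whose pointwise sum equals $\L_0^k[1](y)=O(1)$ by Theorem \ref{thm:RPF}. For each fixed $y$, this expresses the output as convolution, pointwise, by a complex measure $\mu_{k,y}$ on $\Gamma_q$ supported on products of Rauzy--Veech generators. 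Since the target lies in $\ell_0^2(\Gamma_q)$ (no trivial summand thanks to the definition of $\rho_q$), I would use Theorem \ref{thm:zorich-conjecture} to produce expansion of the Cayley graph of $\Gamma_q$ on those generators, so iterated convolution by the \emph{uniform} (symmetric, probability) measure on the generators contracts $\ell_0^2(\Gamma_q)$ exponentially, uniformly in $q$. The Bourgain--Kontorovich--Magee majorization trick, applied to the positive measure $|\mu_{k,y}|$, combined with the quasirandomness bound of Proposition \ref{prop:quasirandomness} (which dominates the contribution of new irreducibles by their minimal dimension, paying a factor polynomial in $q$), transfers this expansion into an $L^2(\Xi;\rho_q)\to L^2(\Xi;\rho_q)$ contraction $\|\L_{0,\rho_q}^k u\|_{L^2}\leq C(1-\epsilon')^k q^\eta \|u\|_\infty$.

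Finally I would upgrade the $L^2$ bound to the $C^1$ bound by a Lasota--Yorke type inequality $\|D\L_{0,\rho_q}^k u\|_\infty\leq C\theta^k\|Du\|_\infty+C'\|u\|_\infty$, which follows from the uniform expansion of $Z$ (Proposition \ref{prop:The-map-is uniformly epxanding markov}.\ref{enu:expanding}) and bounded distortion, and then interpolating against the $L^2$ contraction in the standard way. The main obstacle is the middle step: $\L_{0,\rho_q}$ is neither self-adjoint nor convolution by a symmetric probability measure, because the Jacobian weights $w_k$ are non-uniform and the operator is complex-weighted; the essential technical work is precisely the majorization-plus-quasirandomness argument that converts the positive-measure expansion on $\Gamma_q$ into a spectral bound for $\L_{0,\rho_q}$ with only polynomial loss $q^\eta$.
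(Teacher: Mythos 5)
You have identified the correct high-level inputs (Zorich conjecture plus property (T), the Bourgain--Kontorovich--Magee majorization, quasirandomness), and this is indeed the toolbox the paper uses. But several of the steps you sketch are either wrong as stated or gloss over the part that actually makes the argument go.

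The most serious issue is your reduction to $s=0$ by perturbation. The number of iterates needed before the polynomial factor $q^{\eta}$ is beaten is $N\asymp\log q$, so the perturbation you must tolerate shrinks with $q$, and a continuity-in-$s$ argument gives nothing uniform. The paper never reduces to $s=0$; instead it conjugates to the normalized operator $\LL_{s,\rho}=\lambda_\sigma^{-1}h_\sigma\L_{s,\rho}h_\sigma^{-1}$ so that $\LL_\sigma[1]=1$, absorbs the $\sigma$-dependence into the weights $e^{R_\sigma}$, and only at the very end passes back to $\L_{s,\rho}$ by shrinking $\sigma'_1$ so that $\lambda_\sigma<(1-\epsilon)^{1/2}$ (Theorem \ref{thm:RPF}). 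Second, your assertion that ``$\L_{0,\rho_q}^k u(y)$ is pointwise a convolution by a complex measure $\mu_{k,y}$'' is not correct without work: the argument $\alpha_{\gamma_k}\cdots\alpha_{\gamma_1}(y)$ is coupled to the group element $\Theta_{\gamma_k}^*\cdots\Theta_{\gamma_1}^*$ through the same indices $\gamma_i$, which is exactly what forbids a direct convolution interpretation. The paper's decoupling (Lemma \ref{lem:dc1}) splits $N=M+\tilde M$, replaces $F(\alpha_{\gamma_1\ldots\gamma_N}y)$ by $F(\alpha_{\gamma_1\ldots\gamma_M}o)$ at a fixed base point $o$ at cost $O(\Lambda^{-M})$, and thereby produces the operators $Op_{\gamma_1\ldots\gamma_M;y}(\rho)$ that really are (complex-weighted) convolutions in the remaining $\tilde M$ variables; Proposition \ref{prop:major} then further decomposes $\tilde M=LK$ to obtain a genuine iterated convolution majorant $\mu_2$. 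Third, you appeal to quasirandomness for $\rho_q$ acting on $\ell_0^2(\Gamma_q)$, but the lower bound of Proposition \ref{prop:quasirandomness} is for \emph{new} representations; one must decompose $\ell_0^2(\Gamma_q)$ into new parts at the divisor levels $q'|q$, apply the estimate there, and sum over divisors --- this is also where $Q_0$ enters, to exclude divisors small enough that the constants are not absorbed. Finally, the uniform Cayley-graph expansion input is not free: Section \ref{sub:Refining-the-choice-of-gamma-0} prepares a set $\Upsilon$ of $\gamma_0$-adapted paths with $\gamma_0$ chosen longer than every $\gamma\in\Upsilon$, precisely so that the products $\Theta_\gamma^*(\Theta_{\gamma'}^*)^{-1}$ with $\gamma,\gamma'\in\gamma_0.\Upsilon$ generate a conjugate of the Rauzy--Veech group; without that preparation one has no guarantee that the support of the measure in Lemma \ref{lem:-no-invariant-vectros} generates $\Gamma_q$.
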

Proposition \ref{prop:Contraction-congruence-transfer} is proved in Section \ref{sec:Expansion-and-the-twisted-transfer-operator}. Propositions \ref{prop:contraction-transfer-warped} and \ref{prop:Contraction-congruence-transfer}
together with the expression (\ref{eq:Ihat-spectral}) imply Theorem
\ref{thm:Decay of correlations roof over Markov} by the arguments  that we give now.

\begin{proof}[Proof of  Theorem
\ref{thm:Decay of correlations roof over Markov}]
In the first part of the argument we follow \cite[Lemma 7.18]{AGY}, and extend the argument to vector valued functions.
Recall we aim to prove \eqref{eq:I(t) bound}. We write $s= \sigma + it$. Suppose $|\sigma| \leq \sigma_1 / 4$ for $\sigma_1 >0$ as in Theorem  \ref{thm:RPF}.
By integration by parts in the flow direction, we have for some $c>0$ and all $x\in \Xi$
\begin{equation}\label{eq:UV}
\| \hat{U}_{-s} (x) \|_V \leq \frac{c e^{\sigma_1 r(x)/2} }{\max(1,|t|)}\| U \|_{\B_0} , \quad |\hat{V}_s (x) | \leq \frac{c e^{\sigma_1 r(x)/2} }{\max(1,|t|)}\| V \|_{\B_1}.
\end{equation}

We can estimate
$$
\| \L_{s,\rho} \hat{U}_{-s} (x) \|_V \leq 
\sum_{\gamma}^{*} | e^{-sr\circ\alpha_{\gamma}(x)}J\circ\alpha_{\gamma}(x) |  \| \hat{U}_{-s} (\alpha_\gamma x) \|_V
$$
We were able to remove the factors here coming from $\rho$ since the representation is unitary. By the estimate for $\| \hat{U}_{-s} (x) \|_V $ in \eqref{eq:UV}, this is 
$$
\leq \frac{ c \| U \|_{B_0}}{\max(1,|t|)} \sum_{\gamma}^{*}  e^{ (\sigma_1 / 4) r\circ\alpha_{\gamma}(x)}J\circ\alpha_{\gamma}(x)  e^{(\sigma_1/2)  r\circ\alpha_{\gamma}(x)} .
$$
The sum is bounded by a constant $c' > 0$ given $| \sigma | \leq \sigma_1 /4$ and Theorem \ref{thm:RPF}. Hence by increasing $c$ if necessary
\begin{equation*}
\| \L_{s,\rho} \hat{U}_{-s} (x) \|_V \leq  \frac{ c \| U \|_{B_0}}{\max(1,|t|)}
\end{equation*}
for all $x\in \Xi$.
We have (recalling footnote \ref{foot})
$$
\| D( \L_{s,\rho} \hat{U}_{-s}) (x) \| = \| \sum_{\gamma}^{*} D[ e^{-sr\circ\alpha_{\gamma}} J\circ\alpha_{\gamma} \rho(\Theta_{\gamma}^{*})^{-1}.\hat{U}_{-s}\circ\alpha_{\gamma} ] (x)\| .
$$
We have to differentiate $e^{-sr\circ\alpha_{\gamma}}$, $J\circ\alpha_{\gamma}$, the limits of the integral defining $\hat{U}_{-s}$, or $\rho(\Theta_{\gamma}^{*})^{-1}.\hat{U}_{-s}\circ\alpha_{\gamma} $. The latter is the only deviation from \cite[Lemma 7.18]{AGY}. Since $\rho$ is locally constant, we have 
$$
\| D[ \rho(\Theta_{\gamma}^{*})^{-1}.\hat{U}_{-s}\circ\alpha_{\gamma} ](x) \| = \| \rho(\Theta_{\gamma}^{*})^{-1} D[ \hat{U}_{-s}\circ\alpha_{\gamma} ](x) \| = \|  D[ \hat{U}_{-s}\circ\alpha_{\gamma} ](x) \| .
$$
Therefore, since this is the same estimate \cite{AGY} obtain for the analogous term, the same arguments as in \cite[Lemma 7.18]{AGY} imply $\| D( \L_{s,\rho} \hat{U}_{-s}) (x) \| \leq c$.

Hence putting the previous estimates together, we have $\L_{s,\rho} \hat{U}_{-s} \in C^1(\Xi ; \rho)$ with
\begin{equation}\label{eq:Uhat}
\| \L_{s,\rho} \hat{U}_{-s} \|_{C^1(\Xi; \rho)} \leq c \| U \|_{\B_0}, \quad \| \L_{s,\rho} \hat{U}_{-s} \|_{1,t} \leq \frac{c\| U \|_{\B_0}}{\max(1,|t|)}.
\end{equation}
As a clarifying remark, we would have liked to obtain these bounds for $\hat{U}_{-s}$, but it was not possible, so we used $\L_{s,\rho} \hat{U}_{-s}$ instead. We also have from  the bound for $\| \hat{V}_s \|_\infty$ from \eqref{eq:UV},
\begin{equation}\label{eq:VL2}
\| \hat{V}_s \|^2_{L^2(\Xi) } \leq \int_{x\in \Xi} \| \hat{V}_s(x) \|^2 dm (x) \leq \frac{c^2\|V\|^2_{\B_1}}{\max(1 ,|t|)^2} \int_{x\in \Xi} e^{\sigma_1 r(x) } dm(x) \leq  \frac{c' \|V\|^2_{\B_1}}{\max(1 ,|t|)^2}.
\end{equation}
 These are all the functional norm bounds we need  for the proof. We now proceed to use the spectral bounds for the transfer operator.

Let $\sigma'' = \min( \sigma_1 / 4 , \sigma'_0 , \sigma'_1)$ where $\sigma'_0$ and $\sigma'_1$ are the constants from Propositions  \ref{prop:contraction-transfer-warped} and \ref{prop:Contraction-congruence-transfer}. We now specialize $\rho$ to $\rho=\rho_q$.
 Writing $\L_{s,\rho}^k = \L_{s,\rho}^{k-1}\L_{s,\rho} $ we obtain from 
Proposition \ref{prop:contraction-transfer-warped} and \eqref{eq:Uhat} that for $|\sigma| \leq \sigma''$, if $|t| \geq T_0$,
\[
\|\L_{s,\rho}^{k} \hat{U}_{-s} \|_{L^{2}(\Xi)} \leq C\beta^{k-1}\|\L_{s,\rho} \hat{U}_{-s}\|_{1,t} \leq \frac {C' \beta^k }{\max(1 ,|t| )}.
\]
Using \eqref{eq:VL2} and Cauchy-Schwarz,  we can bound the terms defining $\hat{I}$ in \eqref{eq:Ihat-spectral} by
$$
| \langle\L_{s,\rho}^{k}[\hat{U}_{-s}](y),\hat{V}_{s}(y)\rangle | \leq \frac{ c \beta^k \| U\|_{\B_0} \| V\|_{\B_1} }{t^2}
 $$
  for $|t|\geq T_0$ and some $c>0$. Hence for $|t|\geq T_0$ we have
  $$
  \hat{I}(s) \leq \sum_{k=0}^\infty  \frac{ c \beta^k \| U\|_{\B_0} \| V\|_{\B_1} }{t^2} \leq \frac{c'  \| U\|_{\B_0} \| V\|_{\B_1}}{t^2}.
  $$
For $|t| <T_0$ we apply Proposition \ref{prop:Contraction-congruence-transfer} with $t_0 =T_0$ and $u = \L_{s,\rho_q}\hat{U}_{-s}$ to obtain
$$
\|\L_{s,\rho_{q}}^{k}\hat{U}_{-s} \|_{C^{1}}\leq C(1-\epsilon)^{k-1}q^{\eta}\|\L_{s,\rho_{q}}\hat{U}_{-s} u\|_{C^{1}}\leq C'(1-\epsilon)^kq^{\eta}\| U \|_{\B_0},
$$
where the last inequality used \eqref{eq:Uhat}.
Hence for $|t| < T_0$, using Cauchy-Schwarz again to bound $| \langle\L_{s,\rho}^{k}[\hat{U}_{-s}](y),\hat{V}_{s}(y)\rangle |$,
  $$
  \hat{I}(s) \leq \sum_{k=0}^\infty  C'(1-\epsilon)^kq^{\eta}\| U \|_{\B_0}\| V \|_{\B_1} \leq c'  q^\eta\| U\|_{\B_0} \| V\|_{\B_1}.
  $$
  These estimates prove that the expression defining $\hat{I}(s)$ is absolutely uniformly convergent on compact sets in  $|\sigma|\leq \sigma''$. Since each of the terms are analytic, this establishes analytic continuation of $\hat{I}(s)$ to $\Re(s) > -\sigma''$. Since we have established the estimate
  $$
  \hat{I}(s) \leq \frac{ c' q^\eta}{1 + |t|^2} \|U\|_{\B_0} \| V \|_{\B_1} ,\quad |\sigma| \leq \sigma'',
  $$
  by inverting the Laplace transform, using a contour integral over the vertical line $\Re(s) = -\sigma''/2$ as in \cite[Proposition 5.5]{OW}, we obtain for some $\delta> 0$
  $$
  I(t) \leq c'' q^\eta e^{-\delta t} \| U \|_{\B_0} \| V \|_{\B_1}.
  $$
  This completes the proof of Theorem \ref{thm:Decay of correlations roof over Markov}.
\end{proof}

Now, the only outstanding
proofs required for Theorem \ref{thm:decay-of-correlations-precise.} are those of Proposition \ref{prop:contraction-transfer-warped} and
 Proposition \ref{prop:Contraction-congruence-transfer}. These are given in
 Sections \ref{sec:dolg} and \ref{sec:Expansion-and-the-twisted-transfer-operator} respectively.

\section{The Dolgopyat argument for twisted transfer operators}\label{sec:dolg}
In this section we explain the necessary modifications to \cite[Section 7]{AGY}
in order to prove Proposition \ref{prop:contraction-transfer-warped}.

The key idea of the proof, due to Dolgopyat, is to systematically exploit oscillations of the roof function $r$.
As illustrated in Oh and Winter \cite{OW} and Magee, Oh and Winter \cite{MOW}, 
 Dolgopyat's
argument works for skew transfer operators, provided
the twisting unitary cocycle is constant on cylinders of length 1.
 The reason is that because the cocycle is locally constant, it should not
interfere with the oscillations of $r$ during the argument, which is what is being exploited.
In the current setting,  the values of the cocycle $\rho(\Theta_{\gamma})$
only depend on the cylinder $\Xi_{\gamma.\gamma_{0}}$, so the same arguments should in principle apply. We establish this rigorously below.

It will be useful to make the following normalization of the transfer operator as in \cite{AGY}. Recall from Theorem \ref{thm:RPF}  that for $\sigma$ real with $|\sigma|\leq\sigma_1$, $\lambda_\sigma$ is the leading eigenvalue of $\L_\sigma$ and $h_\sigma$ the corresponding positive eigenfunction. The $h_\sigma$ are uniformly bounded below.
We write $s =  \sigma+it$ throughout this section, assume $|\sigma| \leq \sigma_1$, and define
$$
\LL_{s,\rho}[f] := \lambda_\sigma^{-1} h_\sigma^{-1} \L_{s,\rho}[h_\sigma f], 
$$ 
for $f\in C^1(\Xi;\rho)$ and similarly $\LL_{s}[f] := \lambda_\sigma^{-1} h_\sigma^{-1} \L_{s}[h_\sigma f]$ for $f\in C^1(\Xi;\mathbf{C})$.
The purpose of this normalization is that for real $\sigma$, $\LL_\sigma[1]=[1]$, i.e., $\LL_{\sigma}$ is a Markoff operator. The operator $\LL_{s,\rho}$ acts on $C^1(\Xi;\rho)$ by 
\[
\LL_{s,\rho}[f](y):=\sum_{\gamma}^{*}e^{R_s(\alpha_\gamma y)} \rho(\Theta_{\gamma}^{*})^{-1} f(\alpha_{\gamma}(y))
\]
where 
\begin{equation}
R_s := -sr + \log J - \log( h_\sigma \circ Z ) + \log h_\sigma      -\log \lambda_\sigma . \label{eq:RS}
\end{equation}

 Many extensions of the results in \cite[Section 7]{AGY}
rely on two simple observations. Let $(\rho,V)$ be a unitary representation.
When we consider $\LL_{s,\rho}^{n}[f](x)$ we obtain sums of terms
containing factors $\rho(\Theta_{\gamma_{n}}^{*})^{-1}\ldots\rho(\Theta_{\gamma_{1}}^{*})^{-1}f(\alpha_{\gamma_{1}\ldots\gamma_{n}}x)$ (cf. \eqref{eq:transit} below).
The two observations that we will use several times are
\begin{enumerate}
\item \label{enu:p1}Since $(\rho,V)$ is assumed to be unitary, any time
we apply the triangle inequality to $\LL_{s,\rho}^{n}f$, or expressions
derived from $\LL_{s,\rho}^{n}f$ (for example, by taking a derivative),
we can use $\|\rho(\Theta_{\gamma_{n}}^{*})^{-1}\ldots\rho(\Theta_{\gamma_{1}}^{*})^{-1}f(y)\|\leq\|f(y)\|$.
\item \label{enu:p2}If we take a derivative of $\LL_{s,\rho}^{n}f$, since
the factors $\rho(\Theta_{\gamma_{n}}^{*})^{-1}\ldots\rho(\Theta_{\gamma_{1}}^{*})^{-1}$
are constant for each $\gamma_{1},\ldots,\gamma_{n}$, we may commute
the operator $D$ with $\rho(\Theta_{\gamma_{n}}^{*})^{-1}\ldots\rho(\Theta_{\gamma_{1}}^{*})^{-1}$.
Then the previous point may be used.
\end{enumerate}
The following is the extension of \cite[Lemma 7.8]{AGY} to vector
valued functions. Let $\sigma_{1}$ be as in Theorem \ref{thm:RPF}. Recall $\Lambda>1$ is the constant from Proposition \ref{prop:The-map-is uniformly epxanding markov}.
\begin{lem}
\label{lem:7.8}There is $\K>0$ such that for all $n\geq1$,
for all $s=\sigma+it$ with $\sigma\in[-\sigma_{1},\sigma_{1}]$,
$t\in\R$, for all unitary representations $(\rho,V)$, for all $u\in C^{1}(\Xi;\rho)$,
for all $x\in\Xi$,
\[
\|D[\LL_{s,\rho}^{n}u](x)\|\leq\K(1+|t|)\LL_{\sigma}^{n}[\|u\|](x)+\Lambda^{-n}\LL_{\sigma}^{n}[\|Du\|](x).
\]
\end{lem}
\begin{proof}
The proof is the same as that of \cite[Lemma 7.8]{AGY} with the addition
of points \ref{enu:p1} and \ref{enu:p2} above.
\end{proof}
We now fix $\K>5$ satisfying Lemma \ref{lem:7.8}. The next lemma
is the extension of \cite[Lemma 7.9]{AGY} that shows the iterates
of $\LL_{s,\rho}$ are bounded in the operator norm of $\|\bullet\|_{1,t}$.
\begin{lem}
\label{lem:7.9*}There is $C>1$ such that for all $s=\sigma+it$
with $\sigma\in[-\sigma_{1},\sigma_{1}]$ and $|t|\geq10$, for all
$k\in\N$, for all unitary $(\rho,V)$, for all $u\in C^{1}(\Xi;\rho)$
\[
\|\LL_{s,\rho}^{k}u\|_{1,t}\leq C\|u\|_{C^{0}}+\frac{\Lambda^{-k}}{|t|}\|Du\|_{C^{0}}.
\]
Therefore $\|\LL_{s,\rho}^{k}u\|_{1,t}\leq C\|u\|_{1,t}$.
\end{lem}
\begin{proof}
Again, the proof is a straightforward extension of \cite[Lemma 7.9]{AGY}
incorporating point \ref{enu:p1} and Lemma \ref{lem:7.8} in place
of \cite[Lemma 7.8]{AGY}.
\end{proof}
Next we note the extension of \cite[Lemma 7.10]{AGY} to vector valued
functions.
\begin{lem}
\label{lem:7.10*}There is $N_{0}\in\N$ such that for any $n\geq N_{0}$
the following hold. Let $s=\sigma+it$ with $\sigma\in[-\sigma_{1},\sigma_{1}]$
and $|t|\geq10$. Let $(\rho,V)$ be a unitary representation. Let
$v\in C^{1}(\Xi;\rho)$ satisfy 
\[
\sup\|Dv\|\geq2\K|t|\sup\|v\|.
\]
Then 
\[
\|\LL_{s,\rho}^{n}v\|_{1,t}\leq\frac{9}{10}\|v\|_{\text{1,t}}.
\]
\end{lem}
\begin{proof}
Follow the proof of \cite[Lemma 7.10]{AGY} and use point \ref{enu:p1}
and the replacement of \cite[Lemma 7.8]{AGY} by Lemma \ref{lem:7.8}.
\end{proof}
This tells us that to establish contraction of $\LL_{s,\rho}^{n}$
it remains to deal with functions with $\sup\|Dv\|\leq2\K|t|\sup\|v\|$.
Now we make the following natural modification to \cite[Definition 7.11]{AGY}.
\begin{defn}
For $t\in\R$ and $(\rho,V)$ a unitary representation, we say a pair
of functions $(u,v)$ on $\Xi$ is in $\E_{t}^{V}$ if $u:\Xi\to\R_{+}$
is $C^{1}$, $v:\Xi\to V$ is $C^{1}$, $0\leq\|v\|\leq u$, and for
all $x\in\Xi$,
\[
\max(\|Du(x)\|,\|Dv(x)\|)\leq2\K|t|u(x).
\]
\end{defn}
The next lemma is the current analog of \cite[Lemma 7.12]{AGY}.
\begin{lem}
\label{lem:7.12*}There exists $N_{1}\in\N$ such that for any $n\geq N_{1}$
the following hold. Let $s=\sigma+it$ with $\sigma\in[-\sigma_{1},\sigma_{1}]$
and $|t|\geq10$. Let $(\rho,V)$ be unitary and $(u,v)\in\E_{t}^{V}.$
Let $\chi\in C^{1}(\Xi)$ with $\|D\chi\|\leq|t|$ and $\frac{3}{4}\leq\chi\leq1$.
Assume for all $x\in\Xi$, $\|\LL_{s,\rho}^{n}v(x)\|\leq\LL_{\sigma}^{n}(\chi u)(x)$.
Then 
\[
(\LL_{\sigma}^{n}(\chi u),\LL_{s,\rho}^{n}v)\in\E_{t}^{V}.
\]
\end{lem}
\begin{proof}
The proof is the same as that of \cite[Lemma 7.12]{AGY} after replacing all uses of \cite[Lemma 7.8]{AGY} by Lemma \ref{lem:7.8}.
\end{proof}
By the arguments of \cite[pg. 191]{AGY}, there exists $n\geq \max(N_0,N_{1})$,
$\alpha_{1},\alpha_{2}$ inverse branches of $Z^{n}$, and a smooth
vector field $y$ on $\Xi$ with $1\leq\|y\|\leq2$, such that for
all $x\in\Xi$, 
\begin{equation}
\|D[r^{(n)}\circ\alpha_{1}](x)y(x)-D[r^{(n)}\circ\alpha_{2}](x)y(x)\|\geq100\K\max(\|D\alpha_{1}(x)y(x)\|,\|D\alpha_{2}(x)y(x)\|).\label{eq:NLI}
\end{equation}
Note we replaced the constant $9$ from \cite{AGY} by $100$, for
technical reasons. This is permissible by the same arguments leading
to the constant $9$ in \cite{AGY} (9 was arbitrary). The estimate \eqref{eq:NLI} lies at the very core of the Dolgopyat argument.

We now fix this $n,\alpha_{1},\alpha_{2}$, and $y$ throughout the
rest of this section.

For $\gamma_{1},\ldots,\gamma_{k}$ $\gamma_{0}$-adapted let
$\alpha_{\gamma_{1}.\ldots.\gamma_{k}}$ denote the inverse branch
of $Z^{k}$ that maps $\Xi$ to $\Xi_{\gamma_{1}\ldots.\gamma_{k}\gamma_{0}}$.
Then recalling the previously defined $\alpha_{\gamma}$ from (\ref{eq:alpha-defn})
one has the composition law
\[
\alpha_{\gamma_{1}\ldots\gamma_{k}}=\alpha_{\gamma_{1}}\circ\alpha_{\gamma_{2}}\circ\ldots\circ\alpha_{\gamma_{k}}.
\]If we write for each of $\alpha_{1},\alpha_{2}$, $\alpha_{j}=\alpha_{\gamma_{1}^{(j)}\ldots \gamma_{n}^{(j)}}$
then let us define
\[
\Theta_{1}:=(\Theta_{\gamma_{n}^{(1)}}^{*})^{-1}\ldots(\Theta_{\gamma_{1}^{(1)}}^{*})^{-1},\quad\Theta_{2}:=(\Theta_{\gamma_{n}^{(2)}}^{*})^{-1}\ldots(\Theta_{\gamma_{1}^{(2)}}^{*})^{-1}.
\]
Then (and this is the reason for the definition), $\rho(\Theta_{i})$
is the unitary matrix appearing in the summand of $\LL_{s,\rho}^{n}$
corresponding to the inverse branch $\alpha_{i}$.

The following lemma, analogous to \cite[Lemma 7.13]{AGY}, is the
main point where a new idea is needed to extend the methods of \cite[Section 7]{AGY}
to vector valued functions.  Recall the notation $r^{(n)}$ from Section \ref{sub:Koopman} and the functions $h_\sigma$ from Theorem \ref{thm:RPF}.
\begin{lem}
\label{lem:savings}There are constants $\delta>0$ and $\zeta>0$
such that the following hold. Let $s=\sigma+it$ with $\sigma\in[-\sigma_{1},\sigma_{1}]$
and $|t|\geq10$. For any unitary $(\rho,V)$ let $(u,v)\in\E_{t}^{V}$.
For all $x_{0}\in\Xi$ such that $B(x_{0},(\zeta+\delta)/|t|)$ is
compactly included in $\Xi$, there exists a point $x_{1}$ with $d(x_{0},x_{1})\leq\frac{\zeta}{|t|}$
such that one of the following holds:
\begin{itemize}
\item Either, for all $x\in B(x_{1},\delta/|t|)$
\begin{align*}
\left\Vert e^{-sr^{(n)}\circ\alpha_{1}(x)}J(\alpha_{1}x)\rho(\Theta_{1})[v.h_{\sigma}](\alpha_{1}x)+e^{-sr^{(n)}\circ\alpha_{2}(x)}J(\alpha_{2}x)\rho(\Theta_{2})[v.h_{\sigma}](\alpha_{2}x)\right\Vert  & \leq\\
\frac{3}{4}e^{-\sigma r^{(n)}\circ\alpha_{1}(x)}J(\alpha_{1}x)[u.h_{\sigma}](\alpha_{1}x)+e^{-\sigma r^{(n)}\circ\alpha_{2}(x)}J(\alpha_{2}x)[u.h_{\sigma}](\alpha_{2}x),
\end{align*}
\item or, for all $x\in B(x_{1},\delta/|t|)$
\begin{align*}
\left\Vert e^{-sr^{(n)}\circ\alpha_{1}(x)}J(\alpha_{1}x)\rho(\Theta_{1})[v.h_{\sigma}](\alpha_{1}x)+e^{-sr^{(n)}\circ\alpha_{2}(x)}J(\alpha_{2}x)\rho(\Theta_{2})[v.h_{\sigma}](\alpha_{2}x)\right\Vert  & \leq\\
e^{-\sigma r^{(n)}\circ\alpha_{1}(x)}J(\alpha_{1}x)[u.h_{\sigma}](\alpha_{1}x)+\frac{3}{4}e^{-\sigma r^{(n)}\circ\alpha_{2}(x)}J(\alpha_{2}x)[u.h_{\sigma}](\alpha_{2}x).
\end{align*}
\end{itemize}
\end{lem}
\begin{proof}

We follow \cite[Proof of Lemma 7.13]{AGY} and split into two cases.

\emph{Case 1. }Assume there is $x_{1}\in B(x_{0},\zeta/|t|)$ such
that either $\|v\circ\alpha_{1}(x_{1})\|\leq u\circ\alpha_{1}(x_{1})/2$
or $\|v\circ\alpha_{2}(x_{1})\|\leq u\circ\alpha_{2}(x_{1})/2$. The
same arguments as in \cite{AGY}, incorporating point \ref{enu:p1}
above, prove that the lemma holds in this case by choosing $\delta$ sufficiently small.

The harder case is the alternative one, wherein we must extend the
arguments of Magee, Oh, and Winter \cite[Proof of Lemma 29]{MOW}
to higher dimensions.

\emph{Case 2. }Assume for all $x\in B(x_{0},\zeta/|t|)$, $\|v\circ\alpha_{1}(x_{1})\|>u\circ\alpha_{1}(x_{1})/2$
and $\|v\circ\alpha_{2}(x_{1})\|>u\circ\alpha_{2}(x_{1})/2$. This implies on $\alpha_{1}(B(x_{0},\zeta/|t|))\cup\alpha_{2}(B(x_{0},\zeta/|t|))$
the function $v$ is non-vanishing. 

As in \cite{AGY} let $\phi:[0,\zeta/(2|t|))\to\Xi$ be the solution
of the differential equation $\phi'(\tau)=y(\phi(\tau))$ with $\phi(0)=x_{0}$.
Let $x^{\tau}:=\phi(\tau).$ Define for $x\in\Xi$ 
\begin{align*}
F_{1}(x) & :=e^{-sr^{(n)}\circ\alpha_{1}(x)}J(\alpha_{1}x)\rho(\Theta_{1})[v.h_{\sigma}](\alpha_{1}x),\\
F_{2}(x) & :=e^{-sr^{(n)}\circ\alpha_{2}(x)}J(\alpha_{2}x)\rho(\Theta_{2})[v.h_{\sigma}](\alpha_{2}x).
\end{align*}
Our goal is to find cancellation between these two functions. We must
follow a slightly different approach to \cite{AGY} because we don't
have exactly the same concept of `phase'. Instead we consider
the complex valued function
\[
\Phi(x):=\frac{\langle F_{1}(x),F_{2}(x)\rangle}{\|F_{1}(x)\|\|F_{2}(x)\|}.
\]
Our strategy of proof will be to establish the following Claim:

\emph{Claim: There is a choice of $\zeta>0$ such that for any $x_{0}$
as above, there is $\tau\in[0,\zeta/(8|t|))$ such that $\Re\Phi(x^{\tau})\leq\frac{1}{8}$.}

Before proving the claim, let us see how it implies Lemma \ref{lem:savings}.
Indeed, given $\tau$ as in the Claim, we let $x_{1}=x^{\tau}$. We
have $x^{\tau}\in B(x_{0},\zeta/(4|t|))$. We need to argue as we
perturb from $x^{\tau}$ in any direction, $\Phi$ does not change
too much.

First we control the sizes of $F_{1}$ and $F_{2}$ and their rates of change. Following \cite[pg. 193]{AGY},
one obtains a constant $C>0$, independent of $\delta$, such that
for all $x\in B(x_{0},\zeta/|t|)$ we have
\begin{equation}
\|DF_i(x)\|\leq C|t|\|F_i(x)\|\label{eq:cone}
\end{equation}
from which it follows from Gronwall's inequality that for all $x,x'\in B(x_{0},\zeta/(3|t|))$,
we have
\begin{equation}
\|F_{i}(x')\|\leq e^{C|t|d(x,x')}\|F_{i}(x)\|.\label{eq:cone-integrated}
\end{equation}
Note from \eqref{eq:cone-integrated} it follows that
\begin{equation}
|D\|F_{i}\||\leq C|t|\|F_{i}\|\label{eq:deriv-cone}
\end{equation}
on the domain $B(x_{0},\zeta/(3|t|))$.

Next we have that $D\langle F_{1},F_{2}\rangle=\langle DF_{1},F_{2}\rangle+\langle F_{1},DF_{2}\rangle$
so by the Schwarz inequality and (\ref{eq:cone})
\begin{equation}
|D\langle F_{1},F_{2}\rangle |\leq\|DF_{1}\|\|F_{2}\|+\|F_{1}\|\|DF_{2}\|\leq2C|t|\|F_{1}\|\|F_{2}\|.\label{eq:deriv-inner-prod}
\end{equation}
Now,
\[
D\Phi=\frac{D\langle F_{1},F_{2}\rangle}{\|F_{1}\||F_{2}\|}-\frac{\langle F_{1},F_{2}\rangle D\|F_{1}\|}{\|F_{1}\|^{2}\|F_{2}\|}-\frac{\langle F_{1},F_{2}\rangle D\|F_{2}\|}{\|F_{2}\|\|F_{2}\|}.
\]
By using (\ref{eq:deriv-cone}), (\ref{eq:deriv-inner-prod}),
the Schwarz inequality and the triangle inequality we obtain
\[
|D\Phi |\leq2C|t|+C|t|+C|t|=4C|t|
\]
on $B(x_{0},\zeta/(3|t|))$. Therefore, if $\delta$ is small enough
with $\delta<\zeta/12$, for all $x\in B(x_{1},\delta/|t|)$ we have
$\Re\Phi(x)\leq\frac{1}{4}$.

Assume that $\|F_{1}(x_{1})\|\geq\|F_{2}(x_{1})\|$. This is without
loss of generality since the other case is symmetrical. Then by choosing
$\delta$ small enough, and using (\ref{eq:cone-integrated}), we
may further assume that for all $x\in B(x_{1},\delta/|t|)$, $\|F_{1}(x)\|\geq\|F_{2}(x)\|/2.$ This implies for all $x\in B(x_{1},\delta/|t|)$
\begin{align*}
\|F_{1}(x)+F_{2}(x)\|^{2} & =\|F_{1}(x)\|^{2}+\|F_{2}(x)\|^{2}+2\Re\langle F_{1}(x),F_{2}(x)\rangle\\
 & =\|F_{1}(x)\|^{2}+\|F_{2}(x)\|^{2}+2\Re\Phi(x)\|F_{1}\|\|F_{2}\|\\
 & \leq\|F_{1}(x)\|^{2}+\|F_{2}(x)\|^{2}+\frac{1}{2}\|F_{1}\|\|F_{2}\|\\
 & =(\|F_{1}(x)\|+\frac{3}{4}\|F_{2}(x)\|)^{2}+\|F_{2}(x)\|\left(\frac{7}{16}\|F_{2}(x)\|-\|F_{1}(x)\|\right)
\end{align*}
which is $\leq(\|F_{1}(x)\|+\frac{3}{4}\|F_{2}(x)\|)^{2}$ for all
$x\in B(x_{1},\delta/|t|)$ since $\|F_{1}\|\geq\|F_{2}\|/2$ there.
\emph{This concludes the proof of Lemma \ref{lem:savings} modulo
the proof of our Claim.}

\emph{Now we prove the claim.} Note first that we can assume we have
$|\Phi(x^{\tau})|>1/8$ for all $\tau\leq\zeta/(8|t|)$, otherwise
the claim is established. 

For $i=1,2,$ let $s_{i}(x):=v\circ\alpha_{i}(x)$. Since $v$ is
non-vanishing on $B(x_{0},\zeta/|t|)$ we can write $s_{i}=\|s_{i}\|s_{i}^{*}$
where $\|s_{i}\|$ and $s_{i}^{*}$ are continuously differentiable.
Then we have the expression
\[
\Phi(x^\tau)=e^{-itr^{(n)}\circ\alpha_{1}(x^{\tau})}\overline{e^{-itr^{(n)}\circ\alpha_{2}(x^{\tau})}}\langle\rho(\Theta_{1})s_{1}^{*}(x^{\tau}),\rho(\Theta_{2})s_{2}^{*}(x^{\tau})\rangle.
\]
Note that $|\Phi(x^{\tau})|>1/8$ for all $\tau\leq\zeta/(8|t|)$
implies $|\langle\rho(\Theta_{1})s_{1}^{*}(x^{\tau}),\rho(\Theta_{2})s_{2}^{*}(x^{\tau})\rangle|>1/8$
for $\tau$ in the same range.

We have for $i=1,2$ and any vector field $J$
\[
D_{J}s_{i}=\left(D_{J}\|s_{i}\|\right)s_{i}^{*}+\|s_{i}\|\left(D_{J}s_{i}^{*}\right)
\]
and from $\langle s_{i}^{*},s_{i}^{*}\rangle_{V}=1$ we obtain
$\langle D_{J}s_{i}^{*},s_{i}^{*}\rangle_{V}=0$, so we have 
\[
\left\Vert D_{J}s_{i}\right\Vert ^{2}=\left(D_{J}\|s_{i}\|\right)^{2}+\|s_{i}\|^{2}\left\Vert D_{J}s_{i}^{*}\right\Vert ^{2}.
\]
This yields
\begin{equation}
|D_{J}\|s_{i}\|(x)|,\left\Vert D_{J}s_{i}^{*}\right\Vert \leq\frac{\left\Vert D_{J}s_{i}\right\Vert }{\|s_{i}\|}\label{eq:kato}
\end{equation}
which is a version of `Kato's inequality'. The numerator on the right
hand side is estimated using $\|Dv\|\leq2\K|t|u$ giving
\begin{equation}
\left\Vert D_{J}s_{i}\right\Vert (x)\leq2\K|t|u(x)\|D\alpha_{i}(x).J(x)\|.
\end{equation}
The denominator is estimated using the assumptions of the current
case giving $\|s_{i}\|\geq(u\circ\alpha_{i})/2$. Together this gives
\begin{equation}
|D_{J}\|s_{i}\|(x)|,\left\Vert D_{J}s_{i}^{*}(x)\right\Vert \leq4\K|t|\|D\alpha_{i}(x).J(x)\|.\label{eq:sstarderiv}
\end{equation}

Consider now the function 
\[
\Upsilon(\tau):=\frac{\langle\rho(\Theta_{1})s_{1}^{*}(x^{\tau}),\rho(\Theta_{2})s_{2}^{*}(x^{\tau})\rangle}{|\langle\rho(\Theta_{1})s_{1}^{*}(x^{\tau}),\rho(\Theta_{2})s_{2}^{*}(x^{\tau})\rangle|}.
\]
Since $h_{\sigma}$ is always nonzero, we can locally write $\Phi(x^\tau)=e^{i\theta(\tau)}\|\Phi(\tau)\|$.
Similarly, for some function $\arg\Upsilon$ we can write $\Upsilon(\tau)=\exp(i\arg\Upsilon(\tau))$.
Then
\begin{equation}
\theta(\tau)=-t(r^{(n)}\circ\alpha_{1}(x^{\tau})-r^{(n)}\circ\alpha_{2}(x^{\tau})))+\arg\Upsilon(\tau).\label{eq:theta-exp}
\end{equation}
By the same arguments as led to (\ref{eq:kato}) we have
\begin{equation}
|\Upsilon'(\tau)|\leq\frac{|\frac{d}{d\tau}\langle\rho(\Theta_{1})s_{1}^{*}(x^{\tau}),\rho(\Theta_{2})s_{2}^{*}(x^{\tau})\rangle|}{|\langle\rho(\Theta_{1})s_{1}^{*}(x^{\tau}),\rho(\Theta_{2})s_{2}^{*}(x^{\tau})\rangle|}.\label{eq:ups-deriv}
\end{equation}
Using (\ref{eq:sstarderiv}) and the triangle and Schwarz inequalities
gives
\begin{align*}
|\frac{d}{d\tau}\langle\rho(\Theta_{1})s_{1}^{*}(x^{\tau}),\rho(\Theta_{2})s_{2}^{*}(x^{\tau})\rangle| & =|\langle\rho(\Theta_{1})\frac{d}{d\tau}s_{1}^{*}(x^{\tau}),\rho(\Theta_{2})s_{2}^{*}(x^{\tau})\rangle+\langle\rho(\Theta_{1})s_{1}^{*}(x^{\tau}),\rho(\Theta_{2})\frac{d}{d\tau}s_{2}^{*}(x^{\tau})\rangle|\\
 & \leq4\K|t|\left(\|D\alpha_{1}(x).y(x)\|+\|D\alpha_{2}(x).y(x)\|\right).
\end{align*}
This bounds the numerator of (\ref{eq:ups-deriv}). The denominator
is $>1/8$ by our current assumptions. Hence
\begin{equation}
|\Upsilon'(\tau)|\leq 64\K|t|\max_{i=1,2}\left(\|D\alpha_{i}(x).y(x)\|\right).\label{eq:ups-deriv-bound}
\end{equation}
The inequality (\ref{eq:ups-deriv-bound}) implies, since the values
of $\Upsilon$ have absolute value one, 
\begin{align}
|(\arg\Upsilon)'(\tau)| & \leq 64\K|t|\max_{i=1,2}\left(\|D\alpha_{i}(x).y(x)\|\right).\label{eq:arg-Ups-deriv}
\end{align}
Therefore using (\ref{eq:NLI}) and (\ref{eq:arg-Ups-deriv}) together
with (\ref{eq:theta-exp}) we obtain 
\[
|\theta'(\tau)|\geq100\K|t|\max_{i=1,2}\left(\|D\alpha_{i}(x).y(x)\|\right)-64\K|t|\max_{i=1,2}\left(\|D\alpha_{i}(x).y(x)\|\right)=36\K|t|\max_{i=1,2}\left(\|D\alpha_{i}(x).y(x)\|\right).
\]
Following \cite{AGY} there is a constant $\gamma_{0}>0$ such that
for all $x\in\Xi$ we have\\ $\max_{i=1,2}\left(\|D\alpha_{i}(x).y(x)\|\right)\geq\gamma_{0}$.
Hence $|\theta'(\tau)|\geq c|t|$ for some $c>0$. We now choose $\zeta=16\pi/c$
so that there will be $\tau\in[0,\zeta/(8|t|)]$ with $\theta=-\pi\bmod2\pi$.
Note  this choice of $\zeta$ only depends on constants defined before
this proof so could have been made a priori. 
This gives $\Phi(x^\tau)\in\R$ and $\Phi(x^\tau)\leq-1/8$. \emph{This
proves the claim.}
\end{proof}
We now fix the constants $\zeta$ and $\delta$ given by Lemma \ref{lem:savings}.
In \cite[pg. 194]{AGY} it is explained that there are constants $C_{0}$
and $\epsilon_{0}$ such that for all $\epsilon<\epsilon_{0}$, for
all $x\in\Xi$, there exists $x'\in\Xi$ such that $d(x,x')\leq C_{0}\epsilon$
and $B(x',\epsilon)$ is compactly included in $\Xi$. We now choose
$T_{0}\geq10$ such that $2(\zeta+\delta)/T_{0}<\epsilon_{0}$.

The following lemma is the replacement of \cite[Lemma 7.15]{AGY}.
\begin{lem}
\label{lem:=00005B7.15*=00005D}There exist $\beta_{0}<1$ and $0<\sigma_{2}<\sigma_{1}$
such that the following hold. Let $s=\sigma+it$ with $\sigma\in[-\sigma_{2},\sigma_{2}]$
and $|t|\geq T_{0}$. Let $(\rho,V)$ be unitary and $(u,v)\in\E_{t}^{V}$.
Then there exists $\tilde{u}:\Xi\to\R$ such that $(\tilde{u},\LL_{s,\rho}^{n}v)\in\E_{t}^{V}$
and $\int\tilde{u}^{2}dm\leq\beta_{0}\int u^{2}dm$.
\end{lem}
\begin{proof}
Following the same arguments of \cite[Lemma 7.15]{AGY} with Lemma
\ref{lem:savings} in place of \cite[Lemma 7.13]{AGY} we construct
a function $\chi$ on $\Xi$ with $3/4\leq\chi\leq1$, $\|D\chi\|\leq|t|$,
and 
\begin{equation}
\|\LL_{s,\rho}^{n}v\|\leq\LL_{\sigma}^{n}(\chi u).\label{eq:temp5}
\end{equation}
We let $\tilde{u}= \LL_{\sigma}^{n}(\chi u)$. By \eqref{eq:temp5} combined with Lemma \ref{lem:7.12*} we obtain $(\tilde{u},\LL_{s,\rho}^{n}v)\in\E_{t}^{V}$.
It remains to show for some $\beta_{0}<1$, $\int\tilde{u}^{2}dm\leq\beta_{0}\int u^{2}dm$
when $\sigma$ is sufficiently small. This can be done using the same
arguments as in \cite{AGY}, since it has nothing to do with $V$,
only the construction of $\chi$ which is basically the same as in
\cite{AGY}.
\end{proof}
Finally, to conclude this section, we note that Proposition \ref{prop:contraction-transfer-warped} follows
from Lemmas \ref{lem:7.9*}, \ref{lem:7.10*}, and \ref{lem:=00005B7.15*=00005D}
exactly in the same way that \cite[Prop. 7.7]{AGY} is proved from
the analogous \cite[Lemmas 7.9, 7.10, and 7.15]{AGY}. 

\section{Expansion and the twisted transfer operator\label{sec:Expansion-and-the-twisted-transfer-operator}}

This section contains a proof of Proposition \ref{prop:Contraction-congruence-transfer}.

\subsection{Refining the choice of $\gamma_{0}$\label{sub:Refining-the-choice-of-gamma-0}}

We now assume that $\pi$ is the member of $\RR$ specified by Theorem \ref{thm:zorich-conjecture}. Let $S$ denote a fixed  finite set of generators of $G_{\pi}$, this is possible since we know $G_\pi$ is finite index in $\Sp(\Z^{2g},\omega_\pi)$.
Choose a finite set $\Upsilon_{0}$ of $\gamma$ that are paths in
$\RR$ beginning and ending in $\pi$ and such that
\[
\{\Theta_{\gamma}^{*}\::\:\gamma\in\Upsilon_{0}\:\}
\]
together with their inverses generate $S$. Now let 
\[
\Upsilon=\Upsilon_{0}\cup\{\gamma.\gamma\::\:\gamma\in\Upsilon_{0}\:\}.
\]
We note for later on that this
definition guarantees
\begin{lem}
\label{lem:differences-gen-G_pi'}The elements
\[
\Theta_{\gamma}^{*}.(\Theta_{\gamma'}^{*})^{-1}\in\Sp(\Z^{2g},\omega_{\pi})\quad\gamma,\gamma'\in\Upsilon
\]
generate $G_{\pi}.$
\end{lem}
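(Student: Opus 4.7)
The plan is to observe that the lemma follows from a short algebraic identity: the sole purpose of enlarging $\Upsilon_0$ to $\Upsilon$ by adjoining the doubled paths $\gamma.\gamma$ is to recover each individual generator $\Theta_\gamma^*$ as a difference (i.e.\ a ratio) $\Theta_{\gamma_1}^*(\Theta_{\gamma_2}^*)^{-1}$.

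First I would set up the two ingredients that make the proof go through. By the choice of $\Upsilon_0$, the set $\{\Theta_\gamma^*:\gamma\in\Upsilon_0\}$ together with the inverses of its members generates $S$, and by the choice of $S$ these elements in turn generate $G_\pi$. So it suffices to show that for every $\gamma\in\Upsilon_0$ the element $\Theta_\gamma^*$ (and hence its inverse) lies in the subgroup $H$ generated by all $\Theta_{\gamma_1}^*(\Theta_{\gamma_2}^*)^{-1}$ with $\gamma_1,\gamma_2\in\Upsilon$.

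Next I would use the composition property of the cocycle. From the defining identity for $\Theta_\gamma^*$ (iterated Rauzy induction acts by $(\Theta_\gamma^*)^{-1}$ on the $\lambda$-coordinate), concatenating a loop $\gamma$ at $\pi$ with itself gives
\[
\Theta_{\gamma.\gamma}^{*}=(\Theta_{\gamma}^{*})^{2}.
\]
Since both $\gamma$ and $\gamma.\gamma$ lie in $\Upsilon$ by construction, the element
\[
\Theta_{\gamma.\gamma}^{*}\,(\Theta_{\gamma}^{*})^{-1}=(\Theta_{\gamma}^{*})^{2}(\Theta_{\gamma}^{*})^{-1}=\Theta_{\gamma}^{*}
\]
belongs to $H$. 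Taking inverses, $(\Theta_\gamma^*)^{-1}\in H$ as well. Combining this with the generation statement from the first step yields $H=G_\pi$, which is the claim.

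There is essentially no obstacle here beyond verifying the composition rule for the $\Theta$'s, which is immediate from its iterative definition. The proof is just bookkeeping confirming that the enlargement $\Upsilon=\Upsilon_0\cup\{\gamma.\gamma:\gamma\in\Upsilon_0\}$ exactly compensates for the fact that differences of the form $\Theta_{\gamma_1}^*(\Theta_{\gamma_2}^*)^{-1}$ a priori only generate a coset-like set rather than $G_\pi$ itself.
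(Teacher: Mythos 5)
Your proof is correct and matches the paper's argument essentially verbatim: both use the composition rule $\Theta_{\gamma.\gamma}^{*}=(\Theta_{\gamma}^{*})^{2}$ to recover $\Theta_{\gamma}^{*}$ and $(\Theta_{\gamma}^{*})^{-1}$ as differences involving the doubled paths, then invoke the defining generation property of $\Upsilon_0$. The only cosmetic difference is that the paper obtains the inverse directly from $\Theta_{\gamma}^{*}(\Theta_{\gamma.\gamma}^{*})^{-1}=(\Theta_{\gamma}^{*})^{-1}$ whereas you take the inverse of the already-obtained element, which is equivalent.
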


\begin{proof}
For a given $\tilde{\gamma}\in\Upsilon_{0},$ we have $\Theta_{\tilde{\gamma}.\tilde{\gamma}}^{*}(\Theta_{\tilde{\gamma}}^{*})^{-1}=\Theta_{\tilde{\gamma}}^{*}$
and $\Theta_{\tilde{\gamma}}^{*}(\Theta_{\tilde{\gamma}.\tilde{\gamma}}^{*})^{-1}=(\Theta_{\tilde{\gamma}}^{*})^{-1}$.
On the other hand, the $\Theta_{\tilde{\gamma}}^{*}$ with $\tilde{\gamma}\in\Upsilon_{0}$
together with their inverses generate $S$ and hence $G'_{\pi}$.
\end{proof}

We will now choose $\gamma_{0}$ such that no $\gamma\in\Upsilon$
contains $\gamma_{0}$ as a substring and moreover $\gamma_{0}$ is
strongly positive and neat (recall these properties from Section \ref{sub:The-choice-of-g0}).
This can be done simply by ensuring that $\gamma_{0}$ is strongly
positive and neat and longer than all $\gamma\in\Upsilon.$ We now give the details of this construction.

Before stating the next lemma we introduce some language. A path in
$\RR$ is \emph{complete }if every $\alpha\in\A$ is the winner of
some arrow in $\gamma$. It follows from a result of \cite[Section 1.2.3]{MMY} (see also \cite[Lemma 3.2]{AGY})
that there exists a complete path $\gamma_{*}$ beginning and ending
at $\pi$. A path in $\RR$ is said to be $k$-complete if it is the
concatenation of $k$ complete paths. Write $\gamma_{*}^{k}$ for
the $k$-fold concatenation of $\gamma_{*}$ with itself. Then for
example, if $\gamma_{*}$ is complete then $\gamma_{*}^{k}$ is $k$-complete.
\begin{lem}[{\cite[Lemma 4.2]{AGY}}]
\label{lem:A-k-complete-path-strongly positive}A $k$-complete path
with $k\geq3|\A|-4$ is strongly positive.
\end{lem}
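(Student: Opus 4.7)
My plan is to split the statement \textbf{strongly positive} into its two clauses and attack them in turn: (a) all entries of $\Theta_\gamma^*$ are strictly positive, and (b) $(\Theta_\gamma^*)^{-1}$ maps $\overline{\K}_\pi\setminus\{0\}$ into $\K_\pi$. Clause (a) is a purely combinatorial statement about the non-negative integer matrix $\Theta_\gamma^*$; clause (b) is a convex-cone statement that, once enough positivity is in place, can be deduced by examining the $2(d-1)$ partial-sum inequalities cutting out $\K_\pi$.

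First I would set up the following support-graph picture for (a). Each elementary step of Rauzy induction with winner $w$ and loser $\ell$ transforms $\Theta^*$ by adding the $\ell$-row to the $w$-row (this is the transpose of the definition (\ref{eq:Theta-defn})), so the set of indices $\beta$ for which $[\Theta_{\gamma'}^*]_{w,\beta}>0$ only grows during the path, and in particular all diagonal entries are always $\geq 1$. Define, for each prefix $\gamma'$ of $\gamma$ and each $\alpha\in\A$, the set $S_\alpha(\gamma'):=\{\beta\in\A:[\Theta_{\gamma'}^*]_{\alpha,\beta}>0\}$. The key combinatorial lemma I would prove is: if $\delta$ is one complete path appended to $\gamma'$, then for every $\alpha$ with $S_\alpha(\gamma')\neq\A$, we have $|S_\alpha(\gamma'\delta)|>|S_\alpha(\gamma')|$ (intuitively, when $\alpha$ eventually wins against a new letter $\ell$ during $\delta$, the row inherits the full row of $\ell$, which contains at least the diagonal entry of $\ell$). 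Iterating this across $|\A|-1$ complete passes makes every $S_\alpha$ equal to $\A$, so $\Theta_\gamma^*$ is strictly positive after at most $|\A|-1$ complete blocks.

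For (b) I would work with $\Theta_\gamma^*$ directly rather than its inverse: the inclusion $(\Theta_\gamma^*)^{-1}(\overline{\K}_\pi\setminus\{0\})\subset\K_\pi$ is equivalent to the assertion that every nonzero $\tau\in\overline{\K}_\pi$ admits a preimage $\tau'=(\Theta_\gamma^*)^{-1}\tau$ satisfying every top partial-sum inequality $\sum_{\pi_t(\xi)\leq k}\tau'_\xi>0$ and bottom inequality $\sum_{\pi_b(\xi)\leq k}\tau'_\xi<0$ strictly. Using $\tau=\Theta_\gamma^*\tau'$, I would translate each such strict inequality on $\tau'$ into a linear combination of the entries of $\Theta_\gamma^*$ paired against the partial-sum inequalities that $\tau$ already satisfies non-strictly in $\overline{\K}_\pi$. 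Provided the positive entries of $\Theta_\gamma^*$ are large enough that no combination of non-strictly-satisfied inequalities can bring any of the $2(d-1)$ target inequalities back to equality, strict positivity of the image follows. Concretely, each additional complete block after the first $|\A|-1$ increases every entry of $\Theta_\gamma^*$ by at least $1$ in a controlled pattern (by the same support-graph bookkeeping), and a total budget of $2|\A|-3$ additional complete blocks is enough to ensure that for each of the $2(d-1)$ inequalities defining $\K_\pi$, every nonzero non-negative combination $\tau$ of the columns of $\Theta_\gamma^*$ has image satisfying it strictly. Adding the $|\A|-1$ blocks from step one gives the total bound $3|\A|-4$.

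The main obstacle is the careful bookkeeping in the second half: not merely obtaining positive entries but ensuring that the strict inequalities defining $\K_\pi$ are preserved after a single application of $(\Theta_\gamma^*)^{-1}$. This is where the precise count $3|\A|-4$ rather than the naive $|\A|-1$ comes in, and it requires tracking how each of the top/bottom partial sums is affected by the successive Rauzy moves of a complete path and verifying that after $2|\A|-3$ extra complete blocks no boundary face of $\K_\pi$ can be fixed by the linear image. All other ingredients (monotonicity of supports, unipotence of $\Theta_{\pi,\epsilon}^*$, that the diagonal remains $\geq 1$) are automatic from the definitions recalled in the excerpt.
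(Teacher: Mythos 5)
This lemma is not proved in the present paper; it is quoted verbatim from \cite{AGY} (their Lemma 4.2), so there is no in-paper proof to compare against. Your decomposition into the two clauses of ``strongly positive'' is the natural first step and is the right shape of argument. However, the ``key combinatorial lemma'' you state for clause (a) --- that $|S_\alpha(\gamma'\delta)|>|S_\alpha(\gamma')|$ whenever $S_\alpha(\gamma')\neq\A$ and $\delta$ is a complete path --- is the entire content of the positivity statement, and the parenthetical justification you give for it assumes the conclusion. Completeness of $\delta$ guarantees that $\alpha$ wins at some step of $\delta$, but it gives no reason for $\alpha$ (or any element of $S_\alpha$) to win against a letter \emph{outside} $S_\alpha$; all of those wins may be against letters already in the support, in which case $|S_\alpha|$ does not increase. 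This is precisely the subtlety that makes the positivity bound nontrivial, and the bound I recall from the literature (Marmi--Moussa--Yoccoz, cited by AGY in this context) is $2|\A|-3$ complete pieces for full positivity of $\Theta_\gamma$, not $|\A|-1$, so your numerology $3|\A|-4=(|\A|-1)+(2|\A|-3)$ would have the two budgets swapped. There is also a mechanical slip in the setup: since $\Theta_{\gamma'\delta}^{*}=\Theta_{\gamma'}^{*}\Theta_{\delta}^{*}$, appending an arrow with winner $w$ and loser $\ell$ multiplies $\Theta_{\gamma'}^{*}$ on the \emph{right} by $I+E_{w\ell}$, which is a \emph{column} operation (winner's column added to loser's column), not ``adding the $\ell$-row to the $w$-row.'' Under the correct bookkeeping, $\ell$ is adjoined to $S_\alpha$ exactly when the winner $w$ is already in $S_\alpha$ at that moment --- which makes the gap in the growth claim plainer still.

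Clause (b) is too vague to evaluate as an argument. You correctly observe that one must verify the $2(|\A|-1)$ partial-sum inequalities strictly on $(\Theta_\gamma^{*})^{-1}\tau$, but the step that would make this a proof --- expressing each pulled-back facet functional as a strictly positive combination of the facets of $\overline{\K}_\pi$, and identifying exactly which (and how large) entries of $\Theta_\gamma^{*}$ are required for this --- is missing, and the stated budget of $2|\A|-3$ additional complete pieces is asserted without support. The remark that $\overline{\K}_\pi$ consists of ``nonnegative combinations of the columns of $\Theta_\gamma^{*}$'' is also not correct: $\K_\pi$ is a cone determined intrinsically by $\pi$ through the partial-sum inequalities, and bears no a priori relation to the columns of $\Theta_\gamma^{*}$. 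To turn the plan into a proof you would need a genuine argument for the support-growth step in (a) (or replace it with the $2|\A|-3$ result from MMY/AGY), and a concrete dual-cone computation for (b).
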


As noted in \cite[pg. 162, footnote]{AGY}, a path is neat if it ends
with a type $\epsilon$ arrow and begins with a string of opposite
type arrows at least half the length of the path. Suppose that $\gamma_{*}$
ends with a bottom arrow. Choose then $k$ such that 
\begin{equation}
l(\gamma_{*}).k\geq\max_{\gamma\in\Upsilon}l(\gamma),\quad k\geq3|\A|-4.\label{eq:k-choice}
\end{equation}
Next choose $\gamma'$ beginning and ending at $\pi$ with $l(\gamma_{*}).k+|\RR|$
top arrows at its beginning and $\leq|\RR|$ arrows afterwards (this
is always possible since whatever the endpoint of the first top arrows,
one can quickly return to $\pi$). Then 
\[
\gamma_{0}:=(\gamma'\gamma_{*})\gamma_{*}^{k-1}
\]
 begins with 
\[
l(\gamma_{*}).k+|\RR|=\frac{1}{2}\left(l(\gamma_{*}).k+|\RR|+|\RR|+l(\gamma_{*}).k\right)\geq\frac{1}{2}l(\gamma_{0})
\]
top arrows so is therefore neat. Also, clearly $\gamma'\gamma_{*}$
is complete so $\gamma_{0}$ is $k$-complete. Therefore $\gamma_{0}$
is strongly positive by Lemma \ref{lem:A-k-complete-path-strongly positive}.
Finally, by choice of $k$ in (\ref{eq:k-choice}) $\gamma_{0}$ is
longer than any element of $\Upsilon$. We have shown
\begin{lem}
It is possible to choose $\gamma_{0}$ so that no element $\gamma\in\Upsilon$
contains $\gamma_{0}$ as a substring and moreover $\gamma_{0}$ is
strongly positive and neat.
\end{lem}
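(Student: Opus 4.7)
The plan is to carry out the explicit construction $\gamma_{0}:=(\gamma'\gamma_{*})\gamma_{*}^{k-1}$ already sketched in the paragraphs preceding the statement, and simply verify the three required properties in turn. First I would invoke \cite[Lemma 3.2]{AGY} (or the original result of Marmi--Moussa--Yoccoz) to produce a complete path $\gamma_{*}$ in $\RR$ beginning and ending at $\pi$, and arrange (by appending a short loop at $\pi$ if necessary) that $\gamma_{*}$ ends with a bottom arrow. I then fix $k \in \mathbf{Z}_{+}$ large enough that both $l(\gamma_{*}) \cdot k \geq \max_{\gamma \in \Upsilon} l(\gamma)$ and $k \geq 3|\A| - 4$ hold; since the right-hand sides are fixed constants this is immediate.

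Next I would choose $\gamma'$ beginning and ending at $\pi$ whose first $l(\gamma_{*})k + |\RR|$ arrows are all top arrows, followed by a return to $\pi$ in at most $|\RR|$ further arrows. The initial block exists because one may follow top arrows from $\pi$ for arbitrarily many steps, and the closing return is possible because every vertex of $\RR$ can be joined to $\pi$ in at most $|\RR|$ arrows in the Rauzy diagram. Setting $\gamma_{0}$ as above, its total length is $l(\gamma_{0}) = l(\gamma') + k \cdot l(\gamma_{*})$, and $l(\gamma') \leq l(\gamma_{*})k + 2|\RR|$.

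The three properties are verified as follows. For \emph{strong positivity}, note that $\gamma'\gamma_{*}$ is complete (since it contains $\gamma_{*}$) and each of the remaining $k-1$ factors $\gamma_{*}$ is complete, so $\gamma_{0}$ is $k$-complete; since $k \geq 3|\A| - 4$, Lemma \ref{lem:A-k-complete-path-strongly positive} yields strong positivity. For \emph{neatness}, using the characterization from \cite[pg. 162, footnote]{AGY}, it suffices that $\gamma_{0}$ ends with one type of arrow and begins with at least $\tfrac{1}{2} l(\gamma_{0})$ arrows of the opposite type. The final arrow is bottom (inherited from the last $\gamma_{*}$), while the initial block of $\gamma_{0}$ contains exactly $l(\gamma_{*})k + |\RR|$ top arrows, and one verifies $l(\gamma_{*})k + |\RR| \geq \tfrac{1}{2}(l(\gamma') + k \cdot l(\gamma_{*}))$ using $l(\gamma') \leq l(\gamma_{*})k + 2|\RR|$. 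For the \emph{substring property}, the bound $l(\gamma_{0}) \geq l(\gamma_{*}) k \geq \max_{\gamma \in \Upsilon} l(\gamma)$ shows that no $\gamma \in \Upsilon$ can contain $\gamma_{0}$ as a subpath for length reasons alone.

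The only genuinely delicate point is the interlocking arithmetic that makes the neatness count work: one must carefully balance the length of the top-arrow prefix of $\gamma'$ against the total length of the concatenated $\gamma_{*}$ factors so that the prefix exceeds half of $l(\gamma_{0})$. This is exactly why the construction is set up with $l(\gamma_{*})k + |\RR|$ top arrows rather than a smaller number; every other aspect of the argument is bookkeeping inside the Rauzy diagram.
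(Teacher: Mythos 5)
Your construction is exactly the one used in the paper: same complete path $\gamma_{*}$ from \cite[Lemma 3.2]{AGY}, same choice of $k$, same auxiliary path $\gamma'$ with the $l(\gamma_{*})k+|\RR|$-length prefix of top arrows, same definition $\gamma_{0}=(\gamma'\gamma_{*})\gamma_{*}^{k-1}$, and the same verifications of $k$-completeness, neatness via the prefix-length count, and the substring property by a length comparison. The brief remark about appending a loop so that $\gamma_{*}$ ends with a bottom arrow is a harmless addition that the paper leaves implicit, and the rest matches line for line.
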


We fix such a $\gamma_{0}$ for the remainder of the paper (and retroactively
for the previous sections). From the discussion in Section \ref{sub:The-choice-of-g0}
this has the consequence that the elements of the set 
\[
\gamma_{0}.\Upsilon:=\{\gamma_{0}\gamma\::\:\gamma\in\Upsilon\:\}
\]
are all $\gamma_{0}$-adapted. We will use this later.

\subsection{Decoupling I: Releasing the convolution}

We now perform the decoupling argument of \cite{MOW} with the first part of the argument based on \cite{BGS2} and the latter
part of the argument coming from \cite[Appendix]{MOW}. One
key difference here is the fact that the symbolic dynamics takes place
on an infinite alphabet.

We understand during this section that all $\gamma_{i}$
are $\gamma_{0}$-adapted in all sums and so forth.  It is possible
to show by adapting the proof of \cite[Lemma 7.8]{AGY} that $\L_{s,\rho}$ and $\LL_{s,\rho}$
act on $C^{1}(\Xi;\rho)$ for $|\sigma|  <\sigma_{1}$ .

Recall the definition of the function $R_s$ from \eqref{eq:RS}. It will be convenient to introduce the function $R_{s}^{(n)}(y):=\sum_{i=0}^{n-1} R_s ( Z^i y )$ for $y$ in the domain of $Z^{n-1}$. Also recall the notation $\alpha_{\gamma_{1}.\ldots.\gamma_{k}}$  from Section \ref{sec:dolg}. With these notations, for $F\in C^{1}(\Xi;\rho)$, 
\begin{equation}\label{eq:transit}
\LL_{s,\rho}^{N}[F](y)=\sum_{\gamma_{1},\ldots,\gamma_{N}}e^{R_s^{(N)}(\alpha_{\gamma_{1}\ldots\gamma_{N}}y)}\rho(\Theta_{\gamma_{N}}^{*})^{-1}.\ldots.\rho(\Theta_{\gamma_{1}}^{*})^{-1}F(\alpha_{\gamma_{1}\ldots\gamma_{N}}y).
\end{equation}
We prepare a preliminary lemma.
\begin{lem}\label{lem:RSN}
There is a constant $C>0$ such that for all $M\geq 1$, all $\sigma$ with $|\sigma |\leq \sigma_1$ and $s= \sigma+it$, and all $y\in \Xi$,
\begin{equation}
\|D[R_{s}^{(M)}\circ\alpha_{\gamma_{1}\ldots\gamma_{M}}](y) \|\leq C(1 + |t|) \label{eq:r^M-deriv-bound}.
\end{equation}
\end{lem}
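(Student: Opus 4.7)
The plan is to write $R_s^{(M)}\circ \alpha_{\gamma_1\ldots\gamma_M}$ as a sum of terms, each of which has a derivative that decays geometrically in the distance from the ``innermost'' inverse branch, so that the sums telescope to a uniform bound. Writing $\alpha := \alpha_{\gamma_1 \ldots \gamma_M}$, the first step is to exploit the telescoping structure built into the definition of $R_s$. Summing the definition, one gets
$$R_s^{(M)}(y) = -s r^{(M)}(y) + \log J^{(M)}(y) + \log h_\sigma(Z^M y) - \log h_\sigma(y) - M\log \lambda_\sigma,$$
and since $Z^M \circ \alpha = \mathrm{id}$, composition with $\alpha$ gives
$$R_s^{(M)}\circ \alpha(y) = -s\sum_{i=0}^{M-1} r\circ \alpha_{\gamma_{i+1}\ldots \gamma_M}(y) + \sum_{i=0}^{M-1} \log J \circ \alpha_{\gamma_{i+1}\ldots \gamma_M}(y) + \log h_\sigma(y) - \log h_\sigma(\alpha(y)) - M\log\lambda_\sigma.$$
The constant term contributes nothing to the derivative, and $\log h_\sigma(y)$ is independent of $\gamma_1,\ldots,\gamma_M$; its derivative is controlled uniformly in $\sigma$ by Theorem \ref{thm:RPF} (real analytic dependence plus uniform lower bound on $h_\sigma$ for $|\sigma|\le\sigma_1$).

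The main point is the geometric decay of derivatives of iterated inverse branches. By item \eqref{enu:expanding} of Proposition \ref{prop:The-map-is uniformly epxanding markov}, $\|D\alpha_\gamma(y)\|\le \Lambda^{-1}$ for any single inverse branch, so by the chain rule $\|D\alpha_{\gamma_{i+2}\ldots\gamma_M}(y)\|\le \Lambda^{-(M-i-1)}$ (with the convention that the empty composition is the identity). Writing $r\circ\alpha_{\gamma_{i+1}\ldots\gamma_M} = (r\circ \alpha_{\gamma_{i+1}}) \circ \alpha_{\gamma_{i+2}\ldots\gamma_M}$, the chain rule combined with Lemma \ref{lem:The-roof-function-is-good}\eqref{enu:rooffunctiongoodbounded-deriv} gives
$$\|D[r\circ \alpha_{\gamma_{i+1}\ldots\gamma_M}](y)\| \le C\,\Lambda^{-(M-i-1)}.$$
Exactly the same argument applies to $\log J$ using item \eqref{enu:uniformly-expanding-logJ-deriv} of Proposition \ref{prop:The-map-is uniformly epxanding markov}, and to $\log h_\sigma\circ \alpha$ using the $C^1$-bound on $\log h_\sigma$.

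Summing these geometric bounds over $i=0,\ldots,M-1$ gives a constant uniform in $M$, because $\sum_{k\ge 0}\Lambda^{-k} = (1-\Lambda^{-1})^{-1} < \infty$. Carrying the factor $|s|\le |\sigma|+|t| \le \sigma_1 + |t|$ in front of the $r$-sum yields the $(1+|t|)$ dependence; all other contributions are bounded independently of $s$ and $M$. This gives the claimed bound. There is no real obstacle; the only point requiring care is tracking the uniformity in $\sigma\in [-\sigma_1,\sigma_1]$ of the $h_\sigma$-dependent terms, which is handled once and for all by Theorem \ref{thm:RPF}.
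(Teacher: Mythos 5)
Your proof is correct and follows essentially the same approach as the paper: exploit the fact that $R_s^{(M)}\circ\alpha_{\gamma_1\ldots\gamma_M}$ decomposes as a sum over $i$ of $R_s$ precomposed with successively shorter inverse branch compositions, bound each derivative using Lemma \ref{lem:The-roof-function-is-good}\eqref{enu:rooffunctiongoodbounded-deriv}, Proposition \ref{prop:The-map-is uniformly epxanding markov}\eqref{enu:uniformly-expanding-logJ-deriv}, and Theorem \ref{thm:RPF}, and conclude by geometric decay from the contraction $\|D\alpha_\gamma\|\le \Lambda^{-1}$. The only cosmetic difference is that you telescope the $\log h_\sigma$ terms across the whole sum before differentiating, whereas the paper keeps $R_s$ intact and bounds $\|D(R_s\circ\alpha_{\gamma_{1+i}\ldots\gamma_M})\|$ directly, but the computations are identical in substance.
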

\begin{proof}
Using  Proposition
\ref{prop:The-map-is uniformly epxanding markov}, Lemma \ref{lem:The-roof-function-is-good}, and Theorem \ref{thm:RPF} we obtain
$$
\|D(R_{s}\circ\alpha_{\gamma})(y)\| \leq c (1 + |t|) 
$$
for all $\gamma$ and $y$.
Furthermore, for $k\geq 1$, by the chain rule and Proposition \ref{prop:The-map-is uniformly epxanding markov} Part  \ref{enu:expanding} we have
$$
\|D(R_{s}\circ\alpha_{\gamma_1 \ldots \gamma_k})(y)\| \leq c (1 + |t|) \Lambda^{-{k+1}}
$$
for $\Lambda>1$.
Now,
\begin{align*}
\| D ( R_{s}^{(M)}\circ\alpha_{\gamma_{1}\ldots\gamma_{M}} ) (y) \| =\| \sum_{i=0}^{M-1} D(R_{s}\circ\alpha_{\gamma_{1+i} \ldots \gamma_M})(y) \| \\
\leq \sum_{i=0}^{M-1} c (1 + |t|) \Lambda^{-{k+1}} \leq C(1 +|t|)
\end{align*}
as required, by summing the geometric series.
\end{proof}
We now perform the same initial
decoupling arguments as in  Bourgain, Gamburd and Sarnak \cite{BGS2}.  Let 
$$
N = M + \tilde{M}
$$
and let $o$ be an arbitrary point in $\Xi$. Write $d$ for the distance on $\Xi$ coming from the Hilbert metric, induced by the Finsler metric on $\Xi$. Note that $\Xi$ has bounded diameter with respect to $d$ (since $\Xi$ is a John domain in the sense of \cite{AGY} by \cite[Lemma 4.4]{AGY}).


 \begin{lem}\label{lem:dc1}We have
\begin{eqnarray}
\LL_{s,\rho}^{N}[F](y) = \sum_{\gamma_{1},\ldots,\gamma_{M}}Op_{\gamma_{1}\ldots\gamma_{M};y}(\rho).\rho(\Theta_{\gamma_{M}}^{*})^{-1}.\ldots.\rho(\Theta_{\gamma_{1}}^{*})^{-1}F(\alpha_{\gamma_{1}\ldots\gamma_{M}}o)+O(\|F\|_{C^{1}}\Lambda^{-M})\nonumber 
\end{eqnarray}
where
\[
Op_{\gamma_{1}\ldots\gamma_{M};y}(\rho):=\sum_{\gamma_{M+1},\ldots,\gamma_{N}}e^{R_{s}^{(N)}(\alpha_{\gamma_{1}\ldots\gamma_{N}}y)}\rho(\Theta_{\gamma_{N}}^{*})^{-1}.\ldots.\rho(\Theta_{\gamma_{M+1}}^{*})^{-1}
\]
is a member of the algebra generated by the $\rho(\Theta_{\gamma_{i}}^{*})^{-1}$
acting on $V$. The error term is in the norm of $V$. We also have
\begin{eqnarray}
D(\LL_{s,\rho}^{N}[F])(y) & = & \sum_{\gamma_{1},\ldots,\gamma_{M}}Op_{\gamma_{1}\ldots\gamma_{M};y}^{\partial}(\rho).\rho(\Theta_{\gamma_{M}}^{*})^{-1}.\ldots.\rho(\Theta_{\gamma_{1}}^{*})^{-1}F(\alpha_{\gamma_{1}\ldots\gamma_{M}}o)\label{eq:derivL^N}\\
 & + & O((1+|t|)\|F\|_{C^{1}}\Lambda^{-M})\nonumber 
\end{eqnarray}
where
\[
Op_{\gamma_{1}\ldots\gamma_{M};y}^{\partial}(\rho):=\sum_{\gamma_{M+1},\ldots,\gamma_{N}}D[ e^{R_s^{(N)}\circ\alpha_{\gamma_{1}\ldots\gamma_{N}}}] (y)\otimes\rho(\Theta_{\gamma_{N}}^{*})^{-1}.\ldots.\rho(\Theta_{\gamma_{M+1}}^{*})^{-1}
\]
is a member of $\Hom(T_{y}\Xi,\R)\otimes\End(V)\cong\Hom(T_{y}\Xi,\End(V))$
and the big $O$ term is interpreted w.r.t. the operator norm between
the Finsler metric norm on $T_{y}\Xi$ and $\End(V)$ with its own
operator norm. Write $\|\bullet\|_{T_{y}\Xi,\End(V)}$ for this norm
and $\|\bullet\|_{\End(V)}$ for the operator norm on $\End(V)$.
\end{lem}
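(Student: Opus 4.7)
\medskip

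\noindent\textbf{Proof plan.} Starting from the iterated formula \eqref{eq:transit}, I would split the sum as $\sum_{\gamma_1,\ldots,\gamma_N} = \sum_{\gamma_1,\ldots,\gamma_M}\sum_{\gamma_{M+1},\ldots,\gamma_N}$ and aim to replace $F(\alpha_{\gamma_1\ldots\gamma_N}y)$ by $F(\alpha_{\gamma_1\ldots\gamma_M}o)$ inside the innermost bracket. The key geometric fact is that, by Proposition \ref{prop:The-map-is uniformly epxanding markov}(\ref{enu:expanding}), each inverse branch of $Z^M$ contracts the Finsler metric by at least $\Lambda^{-M}$. Hence, writing $\alpha_{\gamma_1\ldots\gamma_N} = \alpha_{\gamma_1\ldots\gamma_M}\circ \alpha_{\gamma_{M+1}\ldots\gamma_N}$ and using that $\Xi$ has bounded diameter (since it is a John domain, \cite[Lemma 4.4]{AGY}),
\[
d(\alpha_{\gamma_1\ldots\gamma_N}y,\alpha_{\gamma_1\ldots\gamma_M}o) \leq \Lambda^{-M}\, d(\alpha_{\gamma_{M+1}\ldots\gamma_N}y,o)\leq C\Lambda^{-M},
\]
so by $C^1$-regularity of $F$, the pointwise swap costs $\|F\|_{C^1}\cdot C\Lambda^{-M}$.

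\medskip

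\noindent To see that this pointwise bound survives summation, I would use unitarity of $\rho$ (so every $\rho(\Theta_{\gamma_i}^{*})^{-1}$ has operator norm $1$) together with the Markoff normalization: since $\LL_\sigma[1]=1$ for real $\sigma$, iteration gives $\sum_{\gamma_1,\ldots,\gamma_N} e^{R_\sigma^{(N)}(\alpha_{\gamma_1\ldots\gamma_N}y)}=1$, and because $\lvert e^{R_s^{(N)}}\rvert = e^{R_\sigma^{(N)}}$ (only the real part of $-sr$ contributes to the modulus) the total weight in absolute value is $1$. Combining, the error incurred by the swap is bounded in $V$ by $C\|F\|_{C^1}\Lambda^{-M}$. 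After the swap, the inner sum over $\gamma_{M+1},\ldots,\gamma_N$ collects into exactly $Op_{\gamma_1\ldots\gamma_M;y}(\rho)$, giving the first displayed identity.

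\medskip

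\noindent For the derivative formula, I would differentiate \eqref{eq:transit} termwise via the chain rule, splitting each summand into
\[
D\bigl[e^{R_s^{(N)}\circ\alpha_{\gamma_1\ldots\gamma_N}}\bigr](y)\otimes\rho(\cdots)F(\alpha_{\gamma_1\ldots\gamma_N}y) \;+\; e^{R_s^{(N)}(\alpha y)}\rho(\cdots)\,DF(\alpha_{\gamma_1\ldots\gamma_N}y)\cdot D\alpha_{\gamma_1\ldots\gamma_N}(y).
\]
The second kind of term is negligible: $\|D\alpha_{\gamma_1\ldots\gamma_N}(y)\|\leq \Lambda^{-N}\leq \Lambda^{-M}$ by iterating Proposition \ref{prop:The-map-is uniformly epxanding markov}(\ref{enu:expanding}), and summing against the total weight $1$ gives $O(\|F\|_{C^1}\Lambda^{-M})$. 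For the first kind of term, I perform the same $F$-value swap $F(\alpha_{\gamma_1\ldots\gamma_N}y)\leadsto F(\alpha_{\gamma_1\ldots\gamma_M}o)$; but now the weighting factor is $\lVert D[e^{R_s^{(N)}\circ\alpha_{\gamma_1\ldots\gamma_N}}](y)\rVert = e^{R_\sigma^{(N)}(\alpha y)}\lVert D[R_s^{(N)}\circ\alpha_{\gamma_1\ldots\gamma_N}](y)\rVert$, and by Lemma \ref{lem:RSN} the second factor is $\leq C(1+|t|)$. Summing then yields the error $O((1+|t|)\|F\|_{C^1}\Lambda^{-M})$ in the statement, and the surviving main term is grouped over $\gamma_{M+1},\ldots,\gamma_N$ exactly into $Op^{\partial}_{\gamma_1\ldots\gamma_M;y}(\rho)$.

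\medskip

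\noindent The only delicate book-keeping is checking that the derivative estimate of Lemma \ref{lem:RSN}, which is stated for the full iterate $R_s^{(M)}\circ \alpha_{\gamma_1\ldots\gamma_M}$, applies equally to $R_s^{(N)}\circ\alpha_{\gamma_1\ldots\gamma_N}$ with a constant independent of $N$; this is immediate from the geometric-series argument in its proof. Everything else is routine manipulation of unitarity, contraction of inverse branches, and the Markoff normalization, so I expect no substantive obstacle.
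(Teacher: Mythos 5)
Your proposal is correct and follows essentially the same route as the paper: split the iterated transfer operator, swap $F(\alpha_{\gamma_1\ldots\gamma_N}y)$ for $F(\alpha_{\gamma_1\ldots\gamma_M}o)$ using the $\Lambda^{-M}$ contraction of inverse branches, control the resulting error via unitarity of $\rho$ and the Markoff normalization $\LL_\sigma^N[1]=1$, and for the derivative apply the product rule, discard the $DF$-term as $O(\Lambda^{-N})$, and invoke Lemma~\ref{lem:RSN} to bound the $D[e^{R_s^{(N)}\circ\alpha}]$ weights. Your remark that Lemma~\ref{lem:RSN} applies uniformly in $N$ via the geometric-series argument is exactly what the paper implicitly uses.
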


\begin{proof}
We begin by inspecting \eqref{eq:transit} and noting that $
F( \alpha_{\gamma_1 \ldots \gamma_N} y )$ and $ F(\alpha_{\gamma_1 \ldots \gamma_M } o)$
are distance $\ll \Lambda^{-M}$ apart, where $\Lambda>1$ is the constant from Proposition \ref{prop:The-map-is uniformly epxanding markov}
Part \ref{enu:expanding}. Hence we have 
\begin{equation}\label{fdiff}
\| F( \alpha_{\gamma_1 \ldots \gamma_N} y ) - F(\alpha_{\gamma_1 \ldots \gamma_M } o) \|_V \ll \| F \|_{C^1} \Lambda^{-M}.
\end{equation}
This gives 
\begin{align*}
\LL_{s,\rho}^{N}[F](y)=\sum_{\gamma_{1},\ldots,\gamma_{N}}e^{R_s^{(N)}(\alpha_{\gamma_{1}\ldots\gamma_{N}}y)}\rho(\Theta_{\gamma_{N}}^{*})^{-1}.\ldots.\rho(\Theta_{\gamma_{1}}^{*})^{-1}F(\alpha_{\gamma_{1}\ldots\gamma_{M}}o) \\
+ \sum_{\gamma_{1},\ldots,\gamma_{N}}e^{R_s^{(N)}(\alpha_{\gamma_{1}\ldots\gamma_{N}}y)}\rho(\Theta_{\gamma_{N}}^{*})^{-1}.\ldots.\rho(\Theta_{\gamma_{1}}^{*})^{-1} (F( \alpha_{\gamma_1 \ldots \gamma_N} y ) - F(\alpha_{\gamma_{1}\ldots\gamma_{M}}o)).
\end{align*}
Using that $\rho$ is unitary, the second line above can be bounded in $\| . \|_V$ using  \eqref{fdiff} by
$$
\| F \|_{C^1} \Lambda^{-M} \sum_{\gamma_{1},\ldots,\gamma_{N}}e^{R_{\sigma}^{(N)}(\alpha_{\gamma_{1}\ldots\gamma_{N}}y)} = \| F \|_{C^1} \Lambda^{-M} \LL^N_\sigma[1](y)= \| F \|_{C^1} \Lambda^{-M}  .
$$
This proves the first part of the lemma.
For the second part, note

\begin{align}\label{dc2}
D(\LL_{s,\rho}^{N}[F])(y)=\sum_{\gamma_{1},\ldots,\gamma_{N}}D[ e^{R_s^{(N)}\circ\alpha_{\gamma_{1}\ldots\gamma_{N}}}] (y)\rho(\Theta_{\gamma_{N}}^{*})^{-1}.\ldots.\rho(\Theta_{\gamma_{1}}^{*})^{-1}F(\alpha_{\gamma_{1}\ldots\gamma_{N}}y)  \\
+ \sum_{\gamma_{1},\ldots,\gamma_{N}}e^{R_s^{(N)}(\alpha_{\gamma_{1}\ldots\gamma_{N}}y)}\rho(\Theta_{\gamma_{N}}^{*})^{-1}.\ldots.\rho(\Theta_{\gamma_{1}}^{*})^{-1} D( F\circ \alpha_{\gamma_{1}\ldots\gamma_{N}} )(y). \nonumber
\end{align}
Since $\|  D( F\circ \alpha_{\gamma_{1}\ldots\gamma_{N}} )(y) \| \leq \Lambda^{-N} \| F \|_{C^1}$, the second term can be bounded in the norm of $V$ by $\|F\|_{C^1}\Lambda^{-N}$ by the arguments from the first part of the lemma. 
To deal with the first line of \eqref{dc2}, we argue as before, replacing $F( \alpha_{\gamma_1 \ldots \gamma_N} y )$ with $ F(\alpha_{\gamma_1 \ldots \gamma_M } o)$. By the same arguments, we incur an error that can be bounded by
\begin{equation}\label{errorsum}
\| F \|_{C^1} \Lambda^{-M} \sum_{\gamma_{1},\ldots,\gamma_{N}}\|D[ e^{R_s^{(N)}\circ\alpha_{\gamma_{1}\ldots\gamma_{N}}}] (y)\|.
\end{equation}
We must estimate the sum here. We calculate
$$
D( e^{R_s^{(N)}\circ\alpha_{\gamma_{1}\ldots\gamma_{N}}}) (y) = D( R_s^{(N)}\circ\alpha_{\gamma_{1}\ldots\gamma_{N}}) (y)e^{R_s^{(N)}\circ\alpha_{\gamma_{1}\ldots\gamma_{N}}} (y)
$$ so by Lemma \ref{lem:RSN} we have
$$
\|D( e^{R_s^{(N)}\circ\alpha_{\gamma_{1}\ldots\gamma_{N}}}) (y)  \| \leq C(1 + |t|) e^{R_\sigma^{(N)}\circ\alpha_{\gamma_{1}\ldots\gamma_{N}}} (y).
$$
Therefore the sum in \eqref{errorsum} is $\leq C(1+|t|) \sum_{\gamma_1 ,\ldots, \gamma_N}e^{R_\sigma^{(N)}\circ\alpha_{\gamma_{1}\ldots\gamma_{N}}} (y) = C(1+|t|) $. This concludes the proof.
\end{proof}

We will now aim to give operator norm bounds for $Op_{\gamma_{1}\ldots\gamma_{M};y}(\rho_{q}^{\new})$ and $Op_{\gamma_{1}\ldots\gamma_{M};y}^{\partial}(\rho_{q}^{\new})$ that involve power decay in $q$.

\begin{prop}
\label{prop:measure-convolution-bound}
Let $s=\sigma+it$. There is  $D>0$
such that for all $t_{0}>0,$ there are $\sigma_{1},c,C,q_0>0$ such that
for $|\sigma|<\sigma_{1}$, $|t|\leq t_{0}$, $q$ odd with $q>q_0$ and $\tilde{M}\approx c\log q$,
we have 
\[
\|Op_{\gamma_{1}\ldots\gamma_{M};y}(\rho_{q}^{\new})\|_{\End(V)}\leq Ce^{R_{\sigma}^{(M)}(\alpha_{\gamma_{1}\ldots\gamma_{M}}o)}q^{-D},
\]
\[
\|Op_{\gamma_{1}\ldots\gamma_{M};y}^{\partial}(\rho_{q}^{\new})\|_{T_{y}\Xi,\End(V)}\leq Ce^{R_{\sigma}^{(M)}(\alpha_{\gamma_{1}\ldots\gamma_{M}}o)}q^{-D}.
\]
\end{prop}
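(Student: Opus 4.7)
The strategy, following Bourgain--Kontorovich--Magee, is to decouple $Op_{\gamma_1\ldots\gamma_M;y}(\rho_q^{\new})$ into a factor of the expected size $e^{R_s^{(M)}(\alpha_{\gamma_1\ldots\gamma_M}o)}$ times a partial transfer-operator iterate in $\End(V)$, then leverage expansion from property (T) of $\Sp_{2g}(\Z)$ together with quasirandomness of $\ell^2_{\new}(\Gamma_q)$ to bound that iterate by $q^{-D}$ once $\tilde{M} \approx c\log q$. To factor, I split
\[
R_s^{(N)}(\alpha_{\gamma_1\ldots\gamma_N}y) = \sum_{j=1}^{M} R_s(\alpha_{\gamma_j\ldots\gamma_N}y) + R_s^{(\tilde{M})}(\alpha_{\gamma_{M+1}\ldots\gamma_N}y)
\]
and use the uniform $\Lambda^{-1}$-contraction of inverse branches (Proposition \ref{prop:The-map-is uniformly epxanding markov}) together with $\|DR_s\|=O(1+|t|)$ (via Lemma \ref{lem:The-roof-function-is-good} and Theorem \ref{thm:RPF}) to replace $\alpha_{\gamma_j\ldots\gamma_N}y$ by $\alpha_{\gamma_j\ldots\gamma_M}o$ in each term of the first sum, at a cost $C(1+|t|)\Lambda^{-(M-j)}$ that sums geometrically and is absorbed into a bounded constant since $|t|\leq t_0$. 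This produces
\[
Op_{\gamma_1\ldots\gamma_M;y}(\rho_q^{\new}) = e^{R_s^{(M)}(\alpha_{\gamma_1\ldots\gamma_M}o)} \cdot \widetilde{Op}_y(\rho_q^{\new})
\]
where
\[
\widetilde{Op}_y(\rho_q^{\new}) := \sum_{\gamma_{M+1},\ldots,\gamma_N}^* \tilde w_{\vec\gamma}(y)\, \rho_q^{\new}(\Theta_{\gamma_N}^*)^{-1}\cdots\rho_q^{\new}(\Theta_{\gamma_{M+1}}^*)^{-1}
\]
with $|\tilde w_{\vec\gamma}(y)|\leq C_0\, e^{R_\sigma^{(\tilde M)}(\alpha_{\gamma_{M+1}\ldots\gamma_N}y)}$ for $C_0$ depending only on $t_0$. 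Convergence for small $|\sigma|$ follows from Theorem \ref{thm:RPF}, and since $|e^{R_s^{(M)}}|=e^{R_\sigma^{(M)}}$, it suffices to prove $\|\widetilde{Op}_y(\rho_q^{\new})\|_{\End(V)}\leq Cq^{-D}$ uniformly in $y$.

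Second, I recognize $\widetilde{Op}_y(\rho_q^{\new})$ as convolution by a complex measure $\mu_y$ on $\Gamma_q$: since $\rho_q^{\new}$ is a subrepresentation of the left regular representation, each $\rho_q^{\new}(\Theta_\gamma^*)^{-1}$ is a left translation and the product collapses to translation by the single element $(\Theta_{\gamma_{M+1}}^*\cdots\Theta_{\gamma_N}^*)^{-1}\bmod q$. The absolute value $|\mu_y|$ is dominated pointwise by $C_0$ times the real probability measure $\mu^\sigma_y$ coming from the $s=\sigma$ weights (total mass $\LL_\sigma^{\tilde M}[1]=1$ after normalization). The refined choice of $\gamma_0$ in Section \ref{sub:Refining-the-choice-of-gamma-0} ensures every $\gamma_0\gamma$ with $\gamma\in\Upsilon$ is $\gamma_0$-adapted, so by Lemma \ref{lem:differences-gen-G_pi'} and Theorem \ref{thm:zorich-conjecture} the elements $\{\Theta^*_{\gamma_0\gamma}\bmod q\}_{\gamma\in\Upsilon}$ generate $\Gamma_q$ for $q$ coprime to a fixed $Q_0$. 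Property (T) of $\Sp_{2g}(\Z)$ then supplies a uniform spectral gap $\epsilon>0$ on $\ell^2_0(\Gamma_q)$ for any symmetric probability measure supported on these generators; isolating the sub-sum of $\mu^\sigma_y$ indexed by $\gamma_0\Upsilon$ (which carries a uniformly positive fraction of the mass by bounded distortion of $e^{R_\sigma}$ on each cylinder) transfers the gap to $\mu^\sigma_y$, giving $\|\mu^\sigma_y\|_{\ell^2(\Gamma_q)}\leq(1-\epsilon)^{\tilde M}+O(|\Gamma_q|^{-1/2})$. Finally, by Proposition \ref{prop:quasirandomness} every irreducible subrepresentation of $\ell^2_{\new}(\Gamma_q)$ has dimension $\geq q^{c_0}$, so Plancherel on $\Gamma_q$ yields
\[
\|L_\nu\|_{op,\,\ell^2_{\new}(\Gamma_q)}\leq(|\Gamma_q|/q^{c_0})^{1/2}\|\nu\|_{\ell^2(\Gamma_q)}
\]
for any complex measure $\nu$; applied to $\nu=\mu_y$ with $|\mu_y|\leq C_0\mu^\sigma_y$ pointwise, this gives $\|\widetilde{Op}_y(\rho_q^{\new})\|_{op}\leq q^{D_0}(1-\epsilon)^{\tilde M}$ for some $D_0>0$, and choosing $c$ large enough in $\tilde M = c\log q$ forces $(1-\epsilon)^{c\log q}$ to dominate $q^{D_0}$, delivering the required $q^{-D}$.

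The derivative bound on $Op^\partial_{\gamma_1\ldots\gamma_M;y}$ follows the same scheme, with Lemma \ref{lem:RSN} supplying an additional factor $C(1+|t|)\leq C(1+t_0)$ from differentiating $e^{R_s^{(N)}\circ\alpha_{\vec\gamma}}$ that is absorbed into the constant. The main obstacle is the inhomogeneity: $\widetilde{Op}_y$ is not literally a convolution power of a fixed measure on $\Gamma_q$ but an inhomogeneous product, since the one-step distributions depend on the current location in $\Xi$, and the infinite-alphabet character together with the unbounded roof function $r$ preclude the finite-generator arguments of \cite{BGS2}. Reconciling this $y$-dependence uniformly over all $\tilde M$ steps while retaining $q$-uniform spectral control rests on bounded-distortion estimates for $e^{R_s}$ along compositions of inverse branches, for which the spectral analysis of $\LL_\sigma$ (Theorem \ref{thm:RPF}) and the exponential tails of $r$ (Theorem \ref{thm:The-roof-function-has-exponential-tails}) are essential.
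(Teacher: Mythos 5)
Your proposal correctly identifies the overall framework: recognize the operator as convolution by a complex measure on $\Gamma_q$, majorize pointwise by a nonnegative measure built from the $s=\sigma$ weights, bring in a spectral gap coming from property (T) via the sets $\gamma_0\Upsilon$ and Lemma \ref{lem:differences-gen-G_pi'}, and convert to an operator-norm bound on $\ell^2_{\new}(\Gamma_q)$ via quasirandomness (your Plancherel bound $\|L_\nu\|_{\ell^2_{\new}} \leq (|\Gamma_q|/q^{c_0})^{1/2}\|\nu\|_{\ell^2}$ is correct, a minor variant of the paper's Lemma \ref{lem:quasi-random}). The split $R_s^{(N)}(\alpha_{\gamma_1\ldots\gamma_N}y) = R_s^{(M)}(\alpha_{\gamma_1\ldots\gamma_M}o) + R_s^{(\tilde M)}(\alpha_{\gamma_{M+1}\ldots\gamma_N}y) + O(1)$ and the ensuing factorization into $e^{R_\sigma^{(M)}(\alpha_{\gamma_1\ldots\gamma_M}o)} \cdot \widetilde{Op}_y$ matches the paper's passage from $\mu_{\gamma_1\ldots\gamma_M;y}$ to $\mu_1$ via \eqref{eq:mu-to-mu1}.

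However, there is a genuine gap at the step you describe as ``transfers the gap to $\mu^\sigma_y$, giving $\|\mu^\sigma_y\|_{\ell^2(\Gamma_q)}\leq(1-\epsilon)^{\tilde M}+O(|\Gamma_q|^{-1/2})$.'' This does not follow from a spectral gap on the sub-sum of generators. A single measure $\nu$ with $\|\nu\|_{\text{op},\ell^2_0}\leq 1-\epsilon$ can still have $\|\nu\|_{\ell^2}$ of order one (e.g.\ $\nu=\frac12\delta_e+\frac12 U$); the geometric decay $(1-\epsilon)^{\tilde M}$ in the $\ell^2$ norm comes from $\tilde M$-fold \emph{convolution} of gapped measures, and $\mu^\sigma_y$ (the paper's $\mu_1$ in \eqref{eq:mu1-definition}) is emphatically not a convolution: the weight $e^{R_\sigma^{(\tilde M)}(\alpha_{\gamma_{M+1}\ldots\gamma_N}o)}$ couples all indices and does not factor as a product over the $\gamma_i$. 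You name this as ``the main obstacle'' in your last paragraph, but naming it is not resolving it. The paper's resolution is precisely Proposition \ref{prop:major} together with Section \ref{sub:Decoupling-II:-Bounding-the-real-measure}: block the indices $\{M+1,\ldots,N\}$ into $K$ blocks of length $L$, use the contraction estimate \eqref{eq:r^M-deriv-bound} to show the weight factors across blocks up to a per-block error $e^{\pm c\Lambda^{-L}}$ (estimates \eqref{eq:decouple to beta}--\eqref{eq:beta_jtoB}), thereby majorizing $\mu_1$ by a \emph{sum of $K$-fold convolutions} $\mu_2$; isolate the $\gamma_0\Upsilon$ sub-sum inside each factor $\eta_j$ to get a per-factor operator-norm contraction $\|\eta_j\|_{op}\leq(1-\epsilon'')^{1/2}\|\eta_j\|_1$ via property (T) (Lemma \ref{lem:-no-invariant-vectros}); then balance $L$ against the loss $B^K=\exp(2c\Lambda^{-L})^{K}$ so the net contraction survives. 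Only after this is the $\ell^2$ bound available (via Lemma \ref{lem:spectralgfap-iterate-to-ell2-flat}, applied to $\mu_2$, combined with the pointwise domination $|\tilde\mu*\mu|\leq C^2 e^{2R_\sigma^{(M)}}\,\tilde\mu_2*\mu_2$), and it is this chain that feeds into the quasirandomness step. Without carrying out this decoupling your claimed $\ell^2$ estimate on $\mu^\sigma_y$ is unsupported, and the argument does not close.
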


The bound for $Op_{\gamma_{1}\ldots\gamma_{M};y}^{\partial}(\rho_{q}^{\new})$
is similar to that for $Op_{\gamma_{1}\ldots\gamma_{M};y}(\rho_{q}^{\new})$
with no added difficulties\footnote{After applying $Op_{\gamma_{1}\ldots\gamma_{M};y}^{\partial}(\rho_{q}^{\new})$
to a test tangent vector $v$ in $T_{y}\Xi$ one obtains an element
of $\End(V)$ with the task of bounding its operator norm, which can
be done in exactly the same way as we will treat $Op_{\gamma_{1}\ldots\gamma_{M};y}(\rho_{q}^{\new})$.
On the other hand, it is worth pointing out that the bound for $Op_{\gamma_{1}\ldots\gamma_{M};y}^{\partial}(\rho_{q}^{\new})$
relies crucially on the fact that $|t|\leq t_{0}$ whereas this is
not a factor in bounding $Op_{\gamma_{1}\ldots\gamma_{M};y}(\rho_{q}^{\new})$.}, so we treat only $Op_{\gamma_{1}\ldots\gamma_{M};y}(\rho_{q}^{\new})$. The reader can consult \cite[Section 5.3]{MOW} for
more details. The proof of Proposition \ref{prop:measure-convolution-bound}
will take up the remaining Subsections \ref{sub:Bounding-the-operator-norm},
\ref{sub:Decoupling-II:-Bounding-the-real-measure} of the present
section.

\begin{proof}[Proof of Proposition \ref{prop:Contraction-congruence-transfer}  from Proposition \ref{prop:measure-convolution-bound}]
Import all the constants from Proposition \ref{prop:measure-convolution-bound}. Recall we are given $t_0$ such that we assume $s = \sigma+it$ with $|t| \leq t_0$. We also assume $|\sigma| < \sigma_1$. We choose $M  \approx c'\log q$ where $c'>0$ is chosen such that $\Lambda^{-M} \approx q^{-D}$. Then $N \approx c_0 \log q$. 
Note that
$$
\| \rho^{\new}_q(\Theta_{\gamma_{M}}^{*})^{-1}.\ldots.\rho(\Theta_{\gamma_{1}}^{*})^{-1}F(\alpha_{\gamma_{1}\ldots\gamma_{M}}o) \|_V \leq \| F \|_{C^1}
$$
as $\rho^{\new}_q$ is unitary. 

Lemma \ref{lem:dc1} and using the triangle inequality gives a constant $C>0$ such that for any $F\in C^1(\Xi;\rho_q^\new)$
$$
\| \LL^N_{s,\rho_q^\new} F \|_{C^1} \leq C q^{-D} \|F \|_{C^1} \sum_{\gamma_1 , \ldots , \gamma_M } e^{R_{\sigma}^{(M)}(\alpha_{\gamma_{1}\ldots\gamma_{M}}o)} = C q^{-D} \|F \|_{C^1} 
$$
since the sum is $\LL^M_{\sigma}[1] = 1$. Now by increasing $q_0$ if necessary, we ensure that when $q > q_0$, $Cq^{-D} \leq q^{-D/2}$.

Now given an arbitrary $N'$, we can write $N' = aN + b$ with $0\leq b<N\approx c_0 \log q$. Since the operator norm of $\LL_{s,\rho_q^\new}$ is bounded (by comparison to $\LL_\sigma$) by a constant $K$ depending on $t_0$, we obtain for any $F\in C^1(\Xi;\rho_q^\new)$
$$
\| \LL^{N'}_{s,\rho_q^\new} F \|_{C^1} \leq K^b q^{-aD/2} \|F \|_{C^1} \leq q^\eta (1- \epsilon)^{N'} \|F \|_{C^1} 
$$
for some $\e , \eta >0$. 

To deal with $\rho_q$ in place of $\rho_q^\new$, we consider the groups $\Gamma_q(q')$ that are defined to be the kernels of reduction modulo $q'$ on $\Gamma_q$. 

We decompose $\rho_q$ as $\oplus_{1\neq q' | q } \rho_{q'}^q$ where $\rho_{q'}^q$ is the subrepresentation of $\ell^2_0(\Gamma_q)$ corresponding to functions invariant under $\Gamma_q(q')$ but not invariant by any $\Gamma_q(q'')$ with $q''|q'$, $q'' \neq q'$. This gives a splitting
\begin{equation}\label{decomp}
C^1( \Xi ; \rho_q ) = \bigoplus_{ 1 \neq q' | q } C^1( \Xi ; \rho^q_{q'} ).
\end{equation}
The action of the transfer operator $\LL_{s, \rho_q}$  on $C^1( \Xi ; \rho^q_{q'} )$ is intertwined with the action of $\LL_{s, \rho^{\new}_{q'}}$ on $C^1(\Xi; \rho_{q'}^\new )$. Thus if $f\in C^1(\Xi; \rho_q)$ we can decompose $f = \sum_{1 \neq q' | q} f_{q'}$ according to \eqref{decomp}.

\emph{This is the point in the paper where the modulus $Q_0$ of Theorem \ref{main} comes into play. We now assume $Q_0$ is the product of primes $\leq q_0$, where $q_0$ is the constant fixed during this proof. In particular, if $q$ is coprime to $Q_0$, then any $q' | q$ has no proper divisors $\leq q_0$.}
Under this assumption, $f_{q'} =0$ if $q' \leq q_0$.

Now we have
$$
\| \LL^{N'}_{s, \rho_q} f \|_{C^1} \leq \sum_{ q_0 < q' | q } \| \LL^{N'}_{s, \rho_{q'}} f_{q'} \| \leq \sum_{ q_0 < q' | q } q^{\eta} (1 - \epsilon)^{N'} \| f_{q'} \|_{C^1}
$$
where we used the bound we previously obtained for the operator norm of $\LL^{N'}_{s, \rho^\new_{q'}}$ to bound $\| \LL^{N'}_{s, \rho_{q'}} f_{q'} \|_{C^1}$. Since $\| f_{q'} \|_{C^1} \leq \|f \|_{C^1}$ for each $q'$, and $q$ has fewer than $q^{\zeta}$ divisors for some $\zeta>0$ and all $q$, by increasing $\eta$ if necessary the above can be bounded by
$$
q^{\eta} (1 - \epsilon)^{N'} \| f \|_{C^1}.
$$
This proves  Proposition \ref{prop:Contraction-congruence-transfer}   with $\LL_{s,\rho_q^\new}$ in place of $\L_{s,\rho_q^\new}$. To convert between estimates for the unnormalized and normalized transfer operators, note that $\L^N_{s,\rho_q} = \lambda_{\sigma}^{N} h_\sigma \LL^N_{s,\rho_q} h_{\sigma}^{-1}$. Multiplication and division by $h_\sigma$ is a uniformly bounded operator in $|\sigma| < \sigma_1$ by Theorem \ref{thm:RPF}. Morever by Theorem \ref{thm:RPF} we can choose $\sigma'_1 < \sigma_1$ such that $\lambda_\sigma < (1- \epsilon)^{1/2}$ for all $|\sigma| < \sigma'_1$. Therefore under these assumptions on $\sigma$ we have for some $C>0$, for all $N'\geq 1$,
$$
\| \L^{N'}_{s, \rho_q} f \|_{C^1} \leq C q^\eta (1- \epsilon)^{N'/2}
$$
which concludes the proof.
\end{proof}

\subsection{Bounding the operator norm of convolution operators\label{sub:Bounding-the-operator-norm}}

Let $\pi_{q}:\Sp(\Z^{2g},\omega_{\pi})\to\Gamma_{q}$ be the reduction
mod $q$ map.  To improve the readability of the following argument we will write
for $\gamma_{0}$-adapted $\gamma$ 
\[
h_{\gamma}:=\pi_{q}(\Theta_{\gamma}^{*})^{-1}\in\Gamma_{q}.
\]
We are tasked with estimating the operator norm of the group algebra
element
\[
\mu{}_{\gamma_{1}\ldots\gamma_{M};y}:=\sum_{\gamma_{M+1},\ldots,\gamma_{N}}e^{R_{s}^{(N)}(\alpha_{\gamma_{1}\ldots\gamma_{N}}y)}h_{\gamma_{N}}h_{\gamma_{N-1}}\ldots h_{\gamma_{M+1}}\in\mathbf{C}[\Gamma_{q}]
\]
as it acts by convolution on $\ell^{2}_\new(\Gamma_{q})$. Indeed, this
is precisely the operator $Op_{\gamma_{1}\ldots\gamma_{M};y}(\rho_{q}^{\new})$
when restricted to $\ell_{\new}^{2}(\Gamma_{q}).$ We view elements
of $\mathbf{C}[\Gamma_{q}]$ interchangeably as complex valued measures
on $\Gamma_{q}$. We write $*$ for the convolution of measures, this
corresponds to multiplication in $\mathbf{C}[\Gamma_{q}]$. Given
$\mu\in\mathbf{C}[\Gamma_{q}]$ we write $|\mu|$ for the non negative
real measure obtained by taking absolute values of coefficients. We let $\tilde{\mu}$ be the measure defined by $\tilde{\mu}(g):=\overline{\mu(g^{-1})}$. If $\mu_1 , \mu_2\in \mathbf{R}[\Gamma_q]$ we write $\mu_1 \leq \mu_2$ if $\mu_1(g) \leq \mu_2 (g)$ for all $g\in \Gamma_q$.

Recall $N=M+\tilde{M}$, $s= \sigma+it$, and $o$ is an arbitrary but fixed point in $\Xi$. 

\begin{lem}
 We have
\begin{equation}
|\mu{}_{\gamma_{1}\ldots\gamma_{M};y}|\leq Ce^{R_{\sigma}^{(M)}(\alpha_{\gamma_{1}\ldots\gamma_{M}}o)}\mu_{1}\label{eq:mu-to-mu1}
\end{equation}
 where $C>0$ is a constant and
\begin{equation}
\mu_{1}=\sum_{\gamma_{M+1},\ldots,\gamma_{N}}e^{R_{\sigma}^{(\tilde{M})}(\alpha_{\gamma_{M+1}\ldots\gamma_{N}}o)}h_{\gamma_{N}}h_{\gamma_{N-1}}\ldots h_{\gamma_{M+1}}.\label{eq:mu1-definition}
\end{equation}
\end{lem}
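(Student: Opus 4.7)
The plan is to reduce the inequality to a standard bounded-distortion (Gibbs) estimate for the uniformly expanding map $Z$. Since $R_{s}-R_{\sigma}=-itr$ is purely imaginary, taking absolute values of coefficients in the defining sum of $\mu_{\gamma_{1}\ldots\gamma_{M};y}$ gives, for every $g\in\Gamma_{q}$,
\[
|\mu_{\gamma_{1}\ldots\gamma_{M};y}|(g)\leq\sum^{*}_{\gamma_{M+1},\ldots,\gamma_{N}:\,h_{\gamma_{N}}\cdots h_{\gamma_{M+1}}=g}e^{R_{\sigma}^{(N)}(\alpha_{\gamma_{1}\ldots\gamma_{N}}y)},
\]
so matters reduce to the pointwise inequality
\[
R_{\sigma}^{(N)}(\alpha_{\gamma_{1}\ldots\gamma_{N}}y)\leq R_{\sigma}^{(M)}(\alpha_{\gamma_{1}\ldots\gamma_{M}}o)+R_{\sigma}^{(\tilde{M})}(\alpha_{\gamma_{M+1}\ldots\gamma_{N}}o)+O(1),
\]
with an implicit constant independent of $M$, $\tilde{M}$, the tuple of $\gamma_{i}$'s, the point $y$, and of $(\sigma,t)$ in the allowed range.

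To prove this I would first use $Z\circ\alpha_{\gamma}=\mathrm{id}$ to write $R_{\sigma}^{(N)}(\alpha_{\gamma_{1}\ldots\gamma_{N}}y)=\sum_{i=0}^{N-1}R_{\sigma}(\alpha_{\gamma_{i+1}\ldots\gamma_{N}}y)$ and split the sum at $i=M$. For $i<M$, factor $\alpha_{\gamma_{i+1}\ldots\gamma_{N}}y=\alpha_{\gamma_{i+1}\ldots\gamma_{M}}(\alpha_{\gamma_{M+1}\ldots\gamma_{N}}y)$ and compare with $\alpha_{\gamma_{i+1}\ldots\gamma_{M}}(o)$: the $(M-i)$-fold composition of inverse branches contracts distances by $\Lambda^{-(M-i)}$ by Proposition \ref{prop:The-map-is uniformly epxanding markov} part \ref{enu:expanding}. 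Since $\Xi$ has bounded diameter, and $R_{\sigma}$ has uniformly bounded derivative on each $\Xi_{\gamma\gamma_{0}}$ (combining Proposition \ref{prop:The-map-is uniformly epxanding markov} part \ref{enu:uniformly-expanding-logJ-deriv}, Lemma \ref{lem:The-roof-function-is-good} part \ref{enu:rooffunctiongoodbounded-deriv}, and the real-analytic dependence of $h_{\sigma}$ and $\lambda_{\sigma}$ on $\sigma$ from Theorem \ref{thm:RPF}), this yields
\[
|R_{\sigma}(\alpha_{\gamma_{i+1}\ldots\gamma_{N}}y)-R_{\sigma}(\alpha_{\gamma_{i+1}\ldots\gamma_{M}}o)|\lesssim\Lambda^{-(M-i)}.
\]
Summing the geometric series in $i$ contributes $O(1)$ and recovers $\sum_{i=0}^{M-1}R_{\sigma}(\alpha_{\gamma_{i+1}\ldots\gamma_{M}}o)=R_{\sigma}^{(M)}(\alpha_{\gamma_{1}\ldots\gamma_{M}}o)$. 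For $i=M+j$ with $0\leq j<\tilde{M}$, the same argument compares $\alpha_{\gamma_{M+1+j}\ldots\gamma_{N}}y$ with $\alpha_{\gamma_{M+1+j}\ldots\gamma_{N}}o$, contracting by $\Lambda^{-(\tilde{M}-j)}$, and gives a second $O(1)$ error while rebuilding $R_{\sigma}^{(\tilde{M})}(\alpha_{\gamma_{M+1}\ldots\gamma_{N}}o)$.

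Exponentiating and summing over the tuples $(\gamma_{M+1},\ldots,\gamma_{N})$ grouped by their product in $\Gamma_{q}$ then yields the bound with $C=\exp(O(1))$. The only point requiring care is uniformity of the implicit constants in $\sigma$: this is automatic because $R_{s}-R_{\sigma}=-itr$ contributes nothing to the real-part estimates, and the remaining $\sigma$-dependent pieces ($\log h_{\sigma}$ and $\log\lambda_{\sigma}$) depend real-analytically on $\sigma$ for $|\sigma|\leq\sigma_{1}$ by Theorem \ref{thm:RPF}, with $h_{\sigma}$ uniformly bounded below so that $\log h_{\sigma}$ has uniformly bounded derivative. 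I do not expect a serious obstacle here — the argument is a routine Gibbs/bounded-distortion estimate whose key ingredients (uniform expansion of $Z$ and Lipschitz control of $R_{\sigma}$ along inverse branches) have already been established in the excerpt.
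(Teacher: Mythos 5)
Your argument is correct and is precisely the bounded-distortion estimate that the paper invokes by citing \cite[Lemma 38]{MOW} without further detail: split $R_{\sigma}^{(N)}(\alpha_{\gamma_{1}\ldots\gamma_{N}}y)=\sum_{i<N}R_{\sigma}(\alpha_{\gamma_{i+1}\ldots\gamma_{N}}y)$ at $i=M$, replace the base point ($y\mapsto o$ and truncate) term by term, and control the error by a geometric series using the uniform contraction of inverse branches (Proposition \ref{prop:The-map-is uniformly epxanding markov}\eqref{enu:expanding}), the bounded diameter of $\Xi$, and the uniform Lipschitz bound on $R_{\sigma}\circ\alpha_{\gamma}$ (which is exactly what Lemma \ref{lem:RSN} packages). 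Since the paper's proof is only a pointer to the analogous MOW lemma, you have not taken a different route but rather supplied the explicit Gibbs-type argument behind that pointer; your reduction via $\Re R_{s}=R_{\sigma}$ and the grouping of tuples by their product in $\Gamma_{q}$ are both the standard and intended steps.
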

\begin{proof}
The proof is the same as \cite[Lemma 38]{MOW}.
\end{proof}

We now organize the ingredients for the proof of Proposition \ref{prop:measure-convolution-bound}.
\begin{prop}[Majorization of $\mu_1$]\label{prop:major}
There is a constant $\e > 0$ such that for any $B >1$, there exists an integer $L>0$ such that for all $K > 0$, if $\tilde{M}=LK$, there
is a measure $\mu_2$ such that
\begin{equation}
\mu_1 \leq  \mu_2 \label{eq:mu1major},
\end{equation}
for all $\phi \in \ell^2_0(\Gamma_q)$, 
\begin{equation}\label{m2colv}
\| \mu_2 * \phi \|_{\ell^2} \leq (1 - \e)^{K}\| \mu_2 \|_1 \| \phi \|_{\ell^2},
\end{equation}
and 
\begin{equation}\label{m2m1}
\| \mu_2 \|_1 \leq B^{K} \|\mu_1\|_1. 
\end{equation}
\end{prop}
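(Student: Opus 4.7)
The strategy, following the decoupling philosophy of Bourgain-Gamburd-Sarnak \cite{BGS2} and its refinement in \cite[Appendix]{MOW}, is to majorize $\mu_1$ by a $K$-fold convolution $\mu_2 = C_0\,\nu_L^{*K}$ of a single ``block measure'' $\nu_L$ on $\Gamma_q$ that admits an operator norm strictly less than $\|\nu_L\|_1$ on $\ell^2_0(\Gamma_q)$ with a gap uniform in $q$, coming from Kazhdan's property (T) for $\Sp_{2g}(\Z)$.

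First I will partition the $\tilde{M} = LK$ summation indices in (\ref{eq:mu1-definition}) into $K$ consecutive blocks of length $L$ and factorize the weights across blocks. A bounded distortion argument combining Proposition \ref{prop:The-map-is uniformly epxanding markov}(\ref{enu:uniformly-expanding-logJ-deriv}), Lemma \ref{lem:The-roof-function-is-good}(\ref{enu:rooffunctiongoodbounded-deriv}), Theorem \ref{thm:RPF}, and the telescoping computation in Lemma \ref{lem:RSN} shows that the composite weight $e^{R_\sigma^{(\tilde M)}(\alpha_{\gamma_{M+1}\ldots\gamma_N} o)}$ is comparable to the product of the per-block weights $e^{R_\sigma^{(L)}(\alpha_{\beta_j} o_j)}$ up to a universal bounded multiplicative constant, for appropriate choices of base points $o_j \in \Xi$. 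Setting
\[
\nu_L := \sum_{\beta = (\gamma_1,\ldots,\gamma_L)} \Bigl(\sup_{z\in\Xi} e^{R_\sigma^{(L)}(\alpha_\beta z)}\Bigr)\, \delta_{h_{\gamma_L}\cdots h_{\gamma_1}} \in \R[\Gamma_q],
\]
the pointwise majorization $\mu_1 \leq C_0 \nu_L^{*K}$ follows, so I take $\mu_2 := C_0\,\nu_L^{*K}$ and (\ref{eq:mu1major}) is immediate. Since $\LL_\sigma[1]=1$ after normalization (Theorem \ref{thm:RPF}), this Markov property combined with bounded distortion gives $\|\nu_L\|_1 \leq D_1$ for a universal $D_1$. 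To attain the finer bound (\ref{m2m1}) for any prescribed $B>1$, I will replace $\nu_L$ by an appropriate convex combination so that the effective per-block $\ell^1$-mass drops below $B$ once $L$ is taken sufficiently large relative to $B$ and the distortion constant.

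The principal obstacle is the spectral bound (\ref{m2colv}) with $\epsilon$ independent of $q$. Here the refined choice of $\gamma_0$ in Section \ref{sub:Refining-the-choice-of-gamma-0} enters essentially: every element of $\gamma_0 . \Upsilon$ is $\gamma_0$-adapted, so for $L$ large enough the support of $\nu_L$ contains products whose ratios, via Lemma \ref{lem:differences-gen-G_pi'}, exhaust a generating set of $\Pi_q(G_\pi)$. By Theorem \ref{thm:zorich-conjecture} and strong approximation, $\Pi_q(G_\pi) = \Gamma_q$ for all $q$ coprime to a fixed modulus $Q_0$, and Kazhdan's property (T) of $\Sp_{2g}(\Z)$ (using $g\geq 2$) makes the associated Cayley graphs of $\Gamma_q$ a uniform expander family. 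To transfer this spectral gap from the uniform measure on generators to the non-uniformly weighted $\nu_L$, I will invoke the majorization/comparison argument from \cite[Appendix]{MOW}, which controls $\|\nu_L * \phi\|_{\ell^2(\Gamma_q)}$ on $\ell^2_0(\Gamma_q)$ by $(1-\epsilon)\|\nu_L\|_1 \|\phi\|_{\ell^2(\Gamma_q)}$ whenever $\nu_L$ is dominated by a scalar multiple of a measure with good spectral radius on a Cayley graph. Iterating this per-block bound over the $K$ blocks produces (\ref{m2colv}).
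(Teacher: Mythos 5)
The high-level plan you describe (block decomposition of $\tilde M = LK$, decoupling \`a la Bourgain--Gamburd--Sarnak and \cite[Appendix]{MOW}, property (T) via the refined $\gamma_0$ and the set $\Upsilon$) is the right one, but there is a concrete quantitative flaw in the way you build $\mu_2$ that prevents you from proving \eqref{m2m1}, and it is precisely the point where the paper's construction is more delicate.

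You define $\nu_L$ using $\sup_{z\in\Xi} e^{R_\sigma^{(L)}(\alpha_\beta z)}$ and take $\mu_2 = C_0\,\nu_L^{*K}$. The majorization $\mu_1 \leq \nu_L^{*K}$ is indeed pointwise correct, since by telescoping
\[
R_\sigma^{(LK)}(\alpha_{\gamma_{M+1}\ldots\gamma_N}o) = \sum_{j=1}^K R_\sigma^{(L)}\bigl(\alpha_{\beta_j}(\alpha_{\beta_{j+1}\cdots\beta_K}o)\bigr) \leq \sum_{j=1}^K \sup_{z\in\Xi} R_\sigma^{(L)}(\alpha_{\beta_j}z).
\]
But the per-block cost of replacing the actual evaluation point $\alpha_{\beta_{j+1}\cdots\beta_K}o$ by a supremum over \emph{all} of $\Xi$ is not a quantity that can be made small by increasing $L$. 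By Lemma~\ref{lem:RSN} the $C^1$-norm of $R_\sigma^{(L)}\circ\alpha_\beta$ is bounded by an absolute constant $C_0$ (the geometric series in $\Lambda^{-i}$ converges to a constant independent of $L$), and $\Xi$ has \emph{bounded} but not \emph{vanishing} diameter, so the oscillation of $z\mapsto R_\sigma^{(L)}(\alpha_\beta z)$ over $\Xi$ is $\asymp C_0$ uniformly in $L$. Since $\sum_\beta e^{R_\sigma^{(L)}(\alpha_\beta o)} = \LL_\sigma^L[1](o) = 1$, you get $\|\nu_L\|_1 \asymp e^{C_0}$ and hence $\|\mu_2\|_1 \asymp e^{C_0 K}$ while $\|\mu_1\|_1 = 1$. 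The requirement \eqref{m2m1} for \emph{every} $B>1$ then fails for any $B < e^{C_0}$, and the claim that the distortion is "a universal bounded multiplicative constant" (implicitly independent of $K$) is false: the error accumulates multiplicatively over the $K$ blocks.

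What the paper does differently is exactly designed to make the per-block distortion tend to $1$ as $L\to\infty$. It does not take $\mu_2$ to be an iterated convolution of a single fixed block measure. Instead (see \eqref{eq:mu_1-to-iterated-convolution}) $\mu_2$ is a \emph{sum} over "intermediate block" variables of iterated convolutions $\eta_K*\cdots*\eta_1$, where each $\eta_j$ has only one free summation index $\gamma_{M+jL}$ and the remaining $L-1$ indices in its window are held fixed. The key quantitative gain is in \eqref{eq:decouple to beta}: once the intermediate blocks are frozen, the composite weight differs from $\beta_1\cdots\beta_K$ by a factor $\exp(\pm c\Lambda^{-L})^{K-1}$, because the only freedom is a point moving within a cylinder of diameter $\Lambda^{-L}$ rather than over all of $\Xi$. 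Since $\exp(c\Lambda^{-L}) \to 1$ as $L\to\infty$, one can choose $L=L(B,\epsilon'')$ so that $\exp(c\Lambda^{-L}) \leq \min\{B, (1-\epsilon'')^{-1/4}\}$, which absorbs the accumulated distortion into $B^K$ and still leaves a net contraction in \eqref{m2colv}. Your spectral-gap step (using Lemma~\ref{lem:differences-gen-G_pi'}, Theorem~\ref{thm:zorich-conjecture}, and property (T)) is along the right lines, but it, too, must be carried out for the $\eta_j$ (which vary with the frozen intermediate blocks) rather than for a single fixed $\nu_L$; the paper does this via the Rayleigh-quotient argument on $\eta_j * \tilde\eta_j$ and the lower bound $\nu_{g_q^0}\geq C\|\nu\|_1$ coming from \eqref{eq:beta_jtoB}.
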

The proof of this proposition is deferred to the next section.
Note that we would like to have \eqref{m2colv} for $\mu_1$, or even better, the analogous result for $\mu{}_{\gamma_{1}\ldots\gamma_{M};y}$. However we only know $| \mu{}_{\gamma_{1}\ldots\gamma_{M};y} | \ll e^{R_{\sigma}^{(M)}(\alpha_{\gamma_{1}\ldots\gamma_{M}}o)} \mu_2$ from which it is not obvious how to convert \eqref{m2colv} into Proposition \ref{prop:major}.

The solution is to first use Proposition \ref{prop:major} to deduce that the $\ell^2$ norm of $\widetilde{\mu_2} * \mu_2$ is small, hence the $\ell^2$ norm of $\widetilde{\mu{}_{\gamma_{1}\ldots\gamma_{M};y}} *\mu{}_{\gamma_{1}\ldots\gamma_{M};y}$ is small. This will be done using the following lemma.

\begin{lem}[{\cite[Proposition 45]{MOW}}]
\label{lem:spectralgfap-iterate-to-ell2-flat}
For any measure $\nu$ on $\Gamma_q$, we have
$$
\| \tilde{\nu} * \nu \|_2 \leq \frac{ \|\nu\|_1^2 }{ |\Gamma_q |^{1/2}} + \| \nu \|_1 \| \nu \|_{\ell^2_0(\Gamma_q)}.
$$
Here $\| \nu \|_{\ell^2_0(\Gamma_q)}$ is the operator norm of $\nu$ acting by convolution on $\ell^2_0(\Gamma_q)$.
\end{lem}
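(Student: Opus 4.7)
The natural plan is to realize $\tilde{\nu} * \nu$ as $T_\nu^* T_\nu \delta_e$, where $T_\nu$ denotes convolution by $\nu$ acting on $\ell^2(\Gamma_q)$. Indeed, $T_\nu \delta_e = \nu$, and a direct computation shows that the adjoint of $T_\nu$ is $T_{\tilde{\nu}}$, so $T_\nu^* T_\nu \delta_e = \tilde{\nu} * \nu$. Thus $\|\tilde{\nu}*\nu\|_2 = \|T_\nu^* T_\nu \delta_e\|_2$. Next I would use the orthogonal decomposition $\delta_e = c + \psi$ where $c = \tfrac{1}{|\Gamma_q|}\mathbf{1}$ is the projection of $\delta_e$ onto the constants and $\psi \in \ell^2_0(\Gamma_q)$ is the projection onto its orthogonal complement, noting $\|\psi\|_2 \leq 1$. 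By the triangle inequality, it suffices to bound $\|T_\nu^* T_\nu c\|_2$ and $\|T_\nu^* T_\nu \psi\|_2$ separately.

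For the constant piece, observe that convolution against $\nu$ sends $\mathbf{1}$ to $\nu(\Gamma_q)\mathbf{1}$ and against $\tilde{\nu}$ sends $\mathbf{1}$ to $\overline{\nu(\Gamma_q)}\mathbf{1}$. Hence
\[
T_\nu^* T_\nu c = \frac{|\nu(\Gamma_q)|^2}{|\Gamma_q|}\mathbf{1},
\]
whose $\ell^2$ norm equals $|\nu(\Gamma_q)|^2 / |\Gamma_q|^{1/2} \leq \|\nu\|_1^2 / |\Gamma_q|^{1/2}$, matching the first term in the stated bound.

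For the mean-zero piece, the key observation is that both $T_\nu$ and $T_\nu^*$ preserve $\ell^2_0(\Gamma_q)$ (since $\sum_g (\nu * \phi)(g) = \nu(\Gamma_q)\sum_g \phi(g)$). I would first apply the operator norm on $\ell^2_0$ to bound $\|T_\nu \psi\|_2 \leq \|\nu\|_{\ell^2_0(\Gamma_q)}\|\psi\|_2 \leq \|\nu\|_{\ell^2_0(\Gamma_q)}$, and then apply Young's inequality on all of $\ell^2(\Gamma_q)$ to the outer operator, giving $\|T_\nu^*(T_\nu \psi)\|_2 \leq \|\tilde{\nu}\|_1 \|T_\nu\psi\|_2 = \|\nu\|_1 \|T_\nu\psi\|_2 \leq \|\nu\|_1 \|\nu\|_{\ell^2_0(\Gamma_q)}$. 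This is the second term of the bound. Combining with the triangle inequality finishes the proof.

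The whole argument is really just a clean use of the orthogonal decomposition into constants plus mean-zero functions, and there is no serious obstacle; the only point requiring care is applying the two different operator-norm bounds asymmetrically (the weak $\ell^2$ bound via Young for one factor, the strong $\ell^2_0$ bound for the other), which is precisely what produces the asymmetric product $\|\nu\|_1 \|\nu\|_{\ell^2_0(\Gamma_q)}$ rather than $\|\nu\|_{\ell^2_0}^2$ or $\|\nu\|_1^2$.
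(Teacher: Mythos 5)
Your proof is correct and follows the standard approach of the cited source (MOW, Prop.\ 45): decompose $\delta_e$ into its projection onto constants plus its mean-zero part, bound the constant piece using $|\nu(\Gamma_q)| \leq \|\nu\|_1$, and bound the mean-zero piece by applying the $\ell^2_0$ operator norm once and Young's $\ell^1$--$\ell^2$ bound once. No gaps.
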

\begin{proof}
This is proved in \cite[Proof of Prop. 45]{MOW}.
\end{proof}
In the previous lemma, we will take $\nu = \mu_2$. When we succeed in proving  $\| \widetilde{\mu{}_{\gamma_{1}\ldots\gamma_{M};y}} *\mu{}_{\gamma_{1}\ldots\gamma_{M};y}\|_2$ is small, we will then need a way to convert this into information on the operator norm of $ \mu{}_{\gamma_{1}\ldots\gamma_{M};y} $. It is here that the quasirandomness property of $\Sp_{2g}( \Z /q\Z)$ is crucially used.

\begin{lem}
\label{lem:quasi-random}For
some absolute $C, D>0$ 
\[
\|\mu{}_{\gamma_{1}\ldots\gamma_{M};y}\|_{\ell_{\new}^{2}(\Gamma_{q})}\leq C\left(\frac{|\Gamma_{q}|\|\widetilde{\mu{}_{\gamma_{1}\ldots\gamma_{M};y}}*\mu{}_{\gamma_{1}\ldots\gamma_{M};y}\|_{2}^{2}}{q^{D}}\right)^{\frac{1}{4}}.
\]
Here $\|.\|_{2}^{2}$ denotes the $\ell^{2}$
norm of the measure on $\Gamma_{q}$ and $\|\mu{}_{\gamma_{1}\ldots\gamma_{M};y}\|_{\ell_{\new}^{2}(\Gamma_{q})}$
is the operator norm of $\mu{}_{\gamma_{1}\ldots\gamma_{M};y}$ acting
on the new subspace of $\ell^{2}(\Gamma_{q}).$
\end{lem}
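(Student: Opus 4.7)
The plan is to prove this via the standard $C^*$-algebra identity combined with the Peter-Weyl decomposition and the quasirandomness of $\Gamma_q = \Sp_{2g}(\Z/q\Z)$. Write $\mu = \mu_{\gamma_1 \ldots \gamma_M; y}$ and let $C_\mu$ denote the convolution operator $\phi \mapsto \mu * \phi$ acting on $\ell^2_{\new}(\Gamma_q)$. Since the adjoint of $C_\mu$ is $C_{\tilde\mu}$, we have $C_{\tilde\mu} C_\mu = C_{\tilde\mu * \mu}$ and the standard identity $\|C_\mu\|_{\op}^2 = \|C_\mu^* C_\mu\|_{\op}$ reduces matters to bounding the operator norm of convolution by the \emph{positive} measure-class element $\tilde\mu * \mu$ on $\ell^2_{\new}(\Gamma_q)$.

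Next I would invoke the Peter-Weyl decomposition of $\ell^2(\Gamma_q)$ as $\bigoplus_\rho V_\rho \otimes V_\rho^*$, where $\rho$ ranges over irreducible unitary representations of $\Gamma_q$. Convolution by any $\nu \in \mathbf{C}[\Gamma_q]$ acts on the $\rho$-isotypic component as $\hat\nu(\rho) \otimes \mathrm{Id}$, where $\hat\nu(\rho) = \sum_{g \in \Gamma_q} \nu(g) \rho(g)$. Hence
\[
\|C_\nu\|_{\op, \ell^2_{\new}(\Gamma_q)} = \max_{\rho \in \widehat{\Gamma_q}_{\new}} \|\hat\nu(\rho)\|_{\op},
\]
where the maximum is over irreducible representations of $\Gamma_q$ that do not factor through any proper quotient $\Gamma_{q'} \to \Gamma_{q''}$ with $q'' \mid q'$; these are precisely the irreducibles that appear in $\rho_q^{\new}$.

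The main input is now Plancherel's formula for the finite group $\Gamma_q$, which reads
\[
|\Gamma_q| \cdot \|\nu\|_2^2 = \sum_{\rho} \dim(\rho) \cdot \|\hat\nu(\rho)\|_{\HS}^2,
\]
combined with the crude inequality $\|\hat\nu(\rho)\|_{\op} \leq \|\hat\nu(\rho)\|_{\HS}$. For any new-type $\rho$, the quasirandomness estimate (Proposition \ref{prop:quasirandomness}) provides some absolute $D > 0$ with $\dim(\rho) \geq q^D$, and hence
\[
\|\hat\nu(\rho)\|_{\op}^2 \leq \|\hat\nu(\rho)\|_{\HS}^2 \leq \frac{|\Gamma_q| \cdot \|\nu\|_2^2}{\dim(\rho)} \leq \frac{|\Gamma_q| \cdot \|\nu\|_2^2}{q^D}.
\]
Applied with $\nu = \tilde\mu * \mu$, this yields
\[
\|\mu\|_{\ell^2_{\new}(\Gamma_q)}^4 = \|C_{\tilde\mu * \mu}\|_{\op, \ell^2_{\new}}^2 \leq \frac{|\Gamma_q| \cdot \|\tilde\mu * \mu\|_2^2}{q^D},
\]
and taking the fourth root delivers the inequality of the lemma.

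There is essentially no hard step; everything rests on Plancherel, the $C^*$-identity, and the already-stated quasirandomness bound. The only delicate point is verifying that the Peter-Weyl decomposition of $\ell^2_{\new}(\Gamma_q)$ really does pick out exactly the new-type irreducibles, which follows by unwinding the definition: a function lifted from $\Gamma_{q'}$ lies in the sum of isotypic components of representations factoring through $\Gamma_{q'}$, so orthogonality to all such lifts (over proper $q' \mid q$) is equivalent to living in the span of representations that do not factor through any such smaller quotient.
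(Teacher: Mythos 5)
Your proof is correct and is the same argument the paper invokes implicitly by citing \cite[Lemma 44]{MOW}: the Plancherel identity for $\Gamma_q$ is precisely the ``trace formula'' and the factor $\dim(\rho)$ appearing in it is exactly the ``high multiplicity'' of the Sarnak--Xue idea, so the $C^*$-identity plus Plancherel plus $\|\cdot\|_{\mathrm{op}} \leq \|\cdot\|_{\mathrm{HS}}$ plus quasirandomness reproduces the paper's proof in full. One minor bookkeeping note: Proposition \ref{prop:quasirandomness} only gives $\dim(\rho) \geq C'q^D$, so the constant $C$ appearing in the lemma's conclusion is $(C')^{-1/4}$ rather than $1$ as your display suggests; this is immaterial.
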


\begin{proof}
We need to use the lower bound for the degree
of new irreducible representations of $\Sp((\Z/q\Z)^{2g},\omega_{\pi})$
that is given in Proposition \ref{prop:quasirandomness}. Supposing
that the smallest new irreducible representation has dimension $\gg q^{D}$
then by the trace formula argument of \cite[Lemma 44]{MOW} the largest
eigenvalue of $A^{*}A$ where $A:=\mu{}_{\gamma_{1}\ldots\gamma_{M};y}*$
acting on $\ell_{\new}^{2}(\Gamma_{q})$ satisfies
\[
\lambda^{2}q^{D}\leq C'|\Gamma_{q}|\|\widetilde{\mu{}_{\gamma_{1}\ldots\gamma_{M};y}}*\mu{}_{\gamma_{1}\ldots\gamma_{M};y}\|_{2}^{2}.
\]
The crucial point is that the eigenvalue appears with high multiplicity
in the trace formula, an idea that goes back to Sarnak and Xue \cite{SX}.
Since $\|A\|=\lambda^{1/2}$ the lemma follows.
\end{proof}

Now we can prove Proposition  \ref{prop:measure-convolution-bound}, modulo the deferred proof of Proposition \ref{prop:major}.
\begin{proof}[Proof of Proposition \ref{prop:measure-convolution-bound}]
We now make precise the argument we outlined before.
Let $\epsilon > 0$ be the constant from Proposition  \ref{prop:major}. Choose $B>1$ such that for all $q\geq 2$,
$$
\log B  \leq - \log(1 -\epsilon)   \frac{D}{2}\frac{ \log q}{ \log |\Gamma_q| }
$$
where $D$ is the constant from Lemma \ref{lem:quasi-random}. The reason for this choice will be pointed out shortly. Apply Proposition \ref{prop:major} for this $B$ to obtain a constant $L$ and measure $\mu_2$ such that $\mu_1 \leq \mu_2$.
We let $\tilde{M} = LK$ as in Proposition \ref{prop:major}.  Combining Proposition \ref{prop:major} and Lemma \ref{lem:spectralgfap-iterate-to-ell2-flat} we obtain 
$$
\|\widetilde{\mu_{2}}*\mu_{2}\|_{2} \leq  \|\mu_2 \|_1^2 \left(  \frac{ 1 }{ |\Gamma_q |^{1/2}} + (1 -\e)^{K} \right) \leq B^{2K} \|\mu_1 \|_1^2 \left(  \frac{ 1 }{ |\Gamma_q |^{1/2}} + (1 -\e)^{K} \right) 
$$
We can evaluate $\|\mu_1 \|_1$ by
$$
\|\mu_{1}\|_{1}= \sum_{\gamma_{M+1},\ldots,\gamma_{N}}e^{R_{\sigma}^{(\tilde{M})}(\alpha_{\gamma_{M+1}\ldots\gamma_{N}}o)}=\LL_{\sigma}^{\tilde{M}}[1](o) =1.
$$
Hence
$$
 \|\widetilde{\mu_{2}}*\mu_{2}\|_{2}  \leq B^{2K} \left(  \frac{ 1 }{ |\Gamma_q |^{1/2}} + (1 -\e)^{K} \right).
 $$
 From \eqref{eq:mu-to-mu1} and $\mu_1 \leq \mu_2$ we obtain 
$$
\|\widetilde{\mu{}_{\gamma_{1}\ldots\gamma_{M};y}}*\mu{}_{\gamma_{1}\ldots\gamma_{M};y}\|_{2} \leq C^2 e^{2 R_{\sigma}^{(M)}(\alpha_{\gamma_{1}\ldots\gamma_{M}}o)}B^{2K} \left(  \frac{ 1 }{ |\Gamma_q |^{1/2}} + (1 -\e)^{K} \right) .
$$
Using this as input to Lemma \ref{lem:quasi-random} gives
\begin{align*}
\|\mu{}_{\gamma_{1}\ldots\gamma_{M};y}\|^2_{\ell_{\new}^{2}(\Gamma_{q})} \ll \frac{ |\Gamma_q|^{1/2} }{q^{D/2} } \|\widetilde{\mu{}_{\gamma_{1}\ldots\gamma_{M};y}}*\mu{}_{\gamma_{1}\ldots\gamma_{M};y}\|_{2} \\
 \ll e^{2 R_{\sigma}^{(M)}(\alpha_{\gamma_{1}\ldots\gamma_{M}}o)}\frac {B^{2K}}{q^{D/2}} \left(  1 + |\Gamma_q|^{1/2}(1 -\e)^{K} \right) 
\end{align*}
We now choose our constant $c>0$ such that for $K \approx c \log q$, $|\Gamma_q|^{1/2}(1 -\e)^{K} \approx 1$. The choice of $B$ ensures that for this $K$, $\frac {B^{2K}}{q^{D/2}} \ll q^{-D/4}$. Hence
$$
\|\mu{}_{\gamma_{1}\ldots\gamma_{M};y}\|^2_{\ell_{\new}^{2}(\Gamma_{q})} \ll q^{-D/4}  e^{2 R_{\sigma}^{(M)}(\alpha_{\gamma_{1}\ldots\gamma_{M}}o)}.
$$
This proves the first inequality of Proposition \ref{prop:measure-convolution-bound} (replacing $D/8$ by $D$). As remarked before, the second inequality uses essentially the same argument.
\end{proof}

\subsection{Decoupling II: Majorizing $\mu_{1}$.\label{sub:Decoupling-II:-Bounding-the-real-measure}}
In this section we prove Proposition \ref{prop:major} by adapting arguments from \cite[Appendix]{MOW} to the infinite alphabet setting, using also a
different spectral gap input from property (T) that relies on our preparation of the
set $\Upsilon$ and its relation to the Rauzy-Veech
group $G_{\pi}$.
The key idea in the proof is that while $\mu_1$ is not a convolution, it can be majorized by a carefully chosen sum of convolutions.

We further decompose 
\begin{equation}
\tilde{M}=LK\label{eq:tildeM-ecomp}
\end{equation}
 where $L$ is going to be chosen to be a large constant, and decompose
$\{M+1,\ldots,N\}$ into blocks of size either 1, $L-1$ or $L$.
Let 
\[
I_{i,j}=[\gamma_{i},\gamma_{i+j}]
\]
 denote the block of all $\gamma_{i'}$ with $i\leq i'\leq j$. Rewrite
the summation in (\ref{eq:mu1-definition}) as 
\begin{equation}
\mu_{1}=\sum_{I_{M+1,M+L-1},I_{M+L+1,M+2L-1},\ldots,I_{N-L+1,N-1}}\:\sum_{\gamma_{M+L},\gamma_{M+2L},\ldots,\gamma_{N}}e^{R_{\sigma}^{(\tilde{M})}(\alpha_{\gamma_{M+1}\ldots\gamma_{N}}o)}h_{\gamma_{N}}h_{\gamma_{N-1}}\ldots h_{\gamma_{M+1}}.\label{eq:tempmu1}
\end{equation}
This reordering of summation is permitted since the sums are suitably
absolutely convergent by Theorem \ref{thm:RPF} and the following
discussion. Following \cite[(A.15)]{MOW}, using contraction properties
of $\alpha_{\gamma_{i}}$ and the bound (\ref{eq:r^M-deriv-bound})
for the derivative of $R_{\sigma}^{(\tilde{M})}$, one has the bounds

\begin{equation}
\exp(-c\Lambda^{-L})^{K-1}\beta_{1}\beta_{2}\ldots\beta_{K} \leq e^{R_{\sigma}^{(\tilde{M})}(\alpha_{\gamma_{M+1}\ldots\gamma_{N}}o)}\leq\exp(c\Lambda^{-L})^{K-1}\beta_{1}\beta_{2}\ldots\beta_{K}\label{eq:decouple to beta}
\end{equation}
where
\[
\beta_{K}=e^{R_{\sigma}^{(L)}(\alpha_{\gamma_{N-L+1}\ldots\gamma_{N}}o)},\quad\beta_{j}=e^{R_{\sigma}^{(L)}(\alpha_{\gamma_{M+(j-1)L+1}\ldots\gamma_{M+(j+1)L-1}}o)},\:1\leq j\leq K-1.
\]
and $c>0$ is a constant. Notice the important feature that each $\beta_{j}$
depends on only one of $\gamma_{M+jL}$. Inserting the second inequality of (\ref{eq:decouple to beta})
into (\ref{eq:tempmu1}) gives
\begin{equation}
\mu_{1}\leq \mu_2 := \exp(c\Lambda^{-L})^{K-1}\sum_{I_{M+1,M+L-1},I_{M+L+1,M+2L-1},\ldots,I_{N-L+1,N-1}}\eta_{R}*\eta_{R-1}*\ldots*\eta_{1}\label{eq:mu_1-to-iterated-convolution}
\end{equation}
where the $\eta_{j}=\eta_{j}(I_{M+1,M+L-1},I_{M+L+1,M+2L-1},\ldots,I_{N-L+1,N-1})$
are given by
\begin{eqnarray*}
\eta_{K} & := & \sum_{\gamma_{N}}\beta_{K}(\gamma_{N-L},\ldots,\gamma_{N})h_{\gamma_{N}}\ldots h_{\gamma_{N-L+1}},\\
\eta_{j} & := & \sum_{\gamma_{M+jL}}\beta_{j}(\gamma_{M+(j-1)L+1},\ldots,\gamma_{M+(j+1)L-1})h_{\gamma_{M+jL}}\ldots h_{\gamma_{M+(j-1)L+1}},\quad1\leq j\leq K-1.
\end{eqnarray*}
We point out for the readers convenience that \emph{we have now defined $\mu_2$}.
This proves \eqref{eq:mu1major}.

To prove \eqref{m2colv} we now aim for bounds on the operator norms of the measures $\eta_{j}$
acting by convolution on $\ell_{0}^{2}(\Gamma_{q})$. We write $\|\eta_{j}\|_{op}$
for this operator norm. Consider, taking for example $1\leq j\leq K-1$
\begin{equation}
\eta_{j}*\tilde{\eta_{j}}=\sum_{\gamma_{M+jL},\gamma'_{M+jL}}\beta_{j}(\ldots,\gamma_{M+jL},\ldots)\beta_{j}(\ldots,\gamma'_{M+jL},\ldots)h_{\gamma_{M+jL}}(h_{\gamma'_{M+jL}})^{-1}.\label{eq:etastaret}
\end{equation}
Since 
\begin{equation}
\|\eta_{j}\|_{op}=\|\tilde{\eta}_{j}\|_{op}=\sup_{\phi\in\ell_{0}^{2}(\Gamma_{q}):\|\phi\|=1}\langle\eta_{j}*\tilde{\eta}_{j}\phi,\phi\rangle^{1/2}\label{eq:eta-rayeligh}
\end{equation}
we turn to estimating the operator norm of $\eta_{j}*\tilde{\eta_{j}}$
on $\ell_{0}^{2}(\Gamma_{q}).$ We need to both
\begin{enumerate}
\item estimate the values of $\beta_{j}$ and 
\item discuss the group elements $h_{\gamma_{M+jL}}(h_{\gamma_{M+jL}})^{-1}$. 
\end{enumerate}

These are both points of departure from \cite[Appendix]{MOW}, so
we give more details.

\emph{1) }Continuing with $1\leq j\leq K-1$ (the edge case $j=K$
is similar) we have
\begin{eqnarray}
\beta_{j}(\ldots,\gamma_{M+jL},\ldots) & = & e^{R_{\sigma}^{(L)}(\alpha_{\gamma_{M+(j-1)L+1}\ldots\gamma_{M+(j+1)L-1}}o)}\nonumber \\
 & = & \exp\left(\sum_{i=0}^{L-2}R_{\sigma}(\alpha_{\gamma_{M+(j-1)L+1+i}\ldots\gamma_{M+(j+1)L-1}}o)\right)\exp\left(R_{\sigma}(\alpha_{\gamma_{M+jL}\ldots\gamma_{M+(j+1)L-1}}o)\right)\nonumber \\
 & = & \exp\left(\sum_{i=0}^{L-2}R_{\sigma}(\alpha_{\gamma_{M+(j-1)L+1+i}\ldots\gamma_{M+jL-1}}o)+O(\Lambda^{-i})\right)\exp\left(R_{\sigma}(\alpha_{\gamma_{M+jL}\ldots\gamma_{M+(j+1)L-1}}o)\right)\nonumber \\
 & \asymp & B(\gamma_{M+(j-1)L+1+i},\ldots,\gamma_{M+jL-1})\exp\left(R_{\sigma}(\alpha_{\gamma_{M+jL}\ldots\gamma_{M+(j+1)L-1}}o)\right)\label{eq:beta_j-est-1}
\end{eqnarray}
where $\asymp$ means bounded above and below by a constant independent
of all $\gamma_{i}$ and $L$, and 
\[
B(\gamma_{M+(j-1)L+1+i},\ldots,\gamma_{M+jL-1}):=\exp\left(R_{\sigma}^{(L-1)}(\alpha_{\gamma_{M+(j-1)L+1+i}\ldots\gamma_{M+jL-1}}o)\right).
\]
Note the arguments of $B$ are fixed given $\eta_{j}$. Also note
that for fixed $\eta_{j}$ the values
\[
\alpha_{\gamma_{M+jL+1}\ldots\gamma_{M+(j+1)L-1}}o
\]
lie in a (cylinder) set $U(\eta_{j})$ with diameter $\ll\Lambda^{-L}$.
On the other hand, the derivative of $R_{\sigma}\circ\alpha_{\gamma_{M+jL}}$
is uniformly bounded by  (\ref{eq:r^M-deriv-bound}) so the values in the exponent of (\ref{eq:beta_j-est-1})
fluctuate by at most $\ll\Lambda^{-L}$ while $\alpha_{\gamma_{M+jL}}$
is fixed. Therefore 
\begin{equation}
\beta_{j}(\ldots,\gamma_{M+jL},\ldots)\asymp B(\gamma_{M+(j-1)L+1+i},\ldots,\gamma_{M+jL-1})\exp\left(R_{\sigma}(\alpha_{\gamma_{M+jL}}o)\right).\label{eq:beta_jtoB}
\end{equation}
In light of this estimate and the discussion after Theorem \ref{thm:RPF}
concerning convergence of infinite sums, we see that $\eta_{j}$ and
$\eta_{j}*\tilde{\eta}_{j}$ have finite $\ell_{1}$ norms. This supports
our earlier justification of reordering of summations.

\emph{2) }Recall $\Upsilon$ from Lemma \ref{lem:differences-gen-G_pi'}.
We can write
\[
\eta_{j}*\tilde{\eta_{j}}=\nu+\tilde{\nu}
\]
where $\nu$ is the contribution to (\ref{eq:etastaret}) from $\gamma_{M+jL},\gamma'_{M+jL}\in\gamma_{0}.\Upsilon$
and $\tilde{\nu}$ are the remaining contributions. Then the support
of $\nu$ is the reduction mod $q$ of the set 
\[
\Sigma=\{\Theta_{\gamma}^{*}.(\Theta_{\gamma'}^{*})^{-1}\::\:\gamma,\gamma'\in\gamma_{0}.\Upsilon\}.
\]
By Lemma \ref{lem:differences-gen-G_pi'}, the set $\Sigma$ generates
the conjugate of $G_{\pi}$ by $\Theta_{\gamma_{0}}^{*}$. Call this
conjugate group $G'_{\pi}.$

We now bring these arguments \emph{1) }and \emph{2) }together. Let
$\nu=\eta_{j}*\tilde{\eta_{j}}$. Note that the operator formed from
convolution by $\nu$ on $\ell_{0}^{2}(\Gamma_{q})$ is self-adjoint
and positive. Therefore the operator norm of $\|\nu\|$ acting by
convolution on $\ell_{0}^{2}(\Gamma_{q})$ is bounded by

\begin{equation}
\|\nu\|_{op}\leq\sup_{\phi\in\ell_{0}^{2}(\Gamma_{q}),\|\phi\|=1}\langle\nu*\phi,\phi\rangle.\label{eq:rayleigh-nu}
\end{equation}

We need to use the following property of the action of $G'_{\pi}$
on $\Gamma_{q}.$
\begin{lem}[No almost invariant vectors]
 \label{lem:-no-invariant-vectros}There is  some
$\epsilon>0$ such that for all odd $q$, for all $\phi\in\ell_{0}^{2}(\Gamma_{q})$ with
$\|\phi\|_{\ell^{2}}=1$ there is some $g\in\Sigma$ such that if
$g_{q}:=g\mod q$ then
\[
\|g_{q}*\phi-\phi\|_{\ell^{2}}>\epsilon.
\]
\end{lem}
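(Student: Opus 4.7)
The plan is to deduce this from Kazhdan's property (T). By Theorem \ref{thm:zorich-conjecture}, $G_\pi$ contains the level 2 principal congruence subgroup of $\Sp(\Z^{2g},\omega_\pi) \cong \Sp_{2g}(\Z)$ and so has finite index in it. Since $\Sp_{2g}(\Z)$ has property (T) for $g\geq 2$, and property (T) passes to finite index subgroups, both $G_\pi$ and its conjugate $G'_\pi = \Theta_{\gamma_0}^* G_\pi (\Theta_{\gamma_0}^*)^{-1}$ enjoy property (T).

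First I would check that $\Sigma$ is a finite generating set for $G'_\pi$. The composition law $\Theta_{\gamma_1.\gamma_2}^* = \Theta_{\gamma_1}^* \Theta_{\gamma_2}^*$, which follows by iterating the defining relation $\hat{Q}^N(\pi,\lambda,\tau)=(\pi,(\Theta_\gamma^*)^{-1}\lambda,(\Theta_\gamma^*)^{-1}\tau)$, gives for $\gamma,\gamma'\in\Upsilon$
\[
\Theta_{\gamma_0.\gamma}^*(\Theta_{\gamma_0.\gamma'}^*)^{-1} = \Theta_{\gamma_0}^*\bigl(\Theta_\gamma^*(\Theta_{\gamma'}^*)^{-1}\bigr)(\Theta_{\gamma_0}^*)^{-1}.
\]
By Lemma \ref{lem:differences-gen-G_pi'} the inner factors $\Theta_\gamma^*(\Theta_{\gamma'}^*)^{-1}$ range over a generating set of $G_\pi$, so $\Sigma$ generates $\Theta_{\gamma_0}^* G_\pi (\Theta_{\gamma_0}^*)^{-1}=G'_\pi$, and $\Sigma$ is finite by construction.

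Next I would verify that $\ell^2_0(\Gamma_q)$, viewed via $\Pi_q$ and left translation as a unitary representation of $G'_\pi$, has no non-zero invariant vectors when $q$ is odd. Since the kernel of reduction modulo 2 is normal in $\Sp(\Z^{2g},\omega_\pi)$, it is still contained in $G'_\pi$. For odd $q$, the Chinese remainder theorem combined with strong approximation for $\Sp_{2g}$ shows that this level 2 subgroup already surjects onto $\Gamma_q$ under $\Pi_q$; hence $\Pi_q(G'_\pi)=\Gamma_q$. The only $\Gamma_q$-invariants in $\ell^2(\Gamma_q)$ under left translation are the constants, and these are orthogonal to $\ell^2_0(\Gamma_q)$, so there are no $G'_\pi$-invariants.

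To finish, property (T) for the finitely generated group $(G'_\pi,\Sigma)$ supplies a Kazhdan constant $\epsilon = \epsilon(G'_\pi,\Sigma) > 0$, independent of $q$, such that in any unitary $G'_\pi$-representation without non-zero invariants, every unit vector is displaced by more than $\epsilon$ by some element of $\Sigma$. Applied to $\ell^2_0(\Gamma_q)$ for each odd $q$, this yields the lemma with a uniform $\epsilon$. The hypothesis that $q$ be odd is used only to get $\Pi_q(G'_\pi) = \Gamma_q$, and I expect this surjectivity check to be the main technical point of the argument; everything else is a direct invocation of property (T).
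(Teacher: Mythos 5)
Your proof is correct and follows the same route as the paper: derive property (T) for $G'_\pi$ from Theorem \ref{thm:zorich-conjecture} (which gives finite index in $\Sp_{2g}(\Z)$), check via the level-2 congruence subgroup that $\Pi_q(G'_\pi)=\Gamma_q$ for odd $q$ so that $\ell^2_0(\Gamma_q)$ has no invariants, and then read off a uniform Kazhdan constant for the generating set $\Sigma$. You supply several details (the composition law $\Theta_{\gamma_1.\gamma_2}^*=\Theta_{\gamma_1}^*\Theta_{\gamma_2}^*$ showing $\Sigma$ generates $G'_\pi$, and the surjectivity check) that the paper leaves implicit, but the argument is identical in substance.
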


\begin{proof}
By Theorem \ref{thm:zorich-conjecture}, when $q$ is odd, $G'_\pi$ maps onto $\Gamma_q$. Hence $\ell^2_0(\Gamma_q)$ has no invariant vectors. The statement of the lemma is then a consequence of Kazhdan's property $(T)$ for finite index subgroups of $\Sp(\Z^{2g},\omega_{\pi})$  \cite{KAZHDAN} applied to $G'_\pi$.
\end{proof}

Write 
\[
\nu=\sum_{g_{q}\in\Gamma_{q}}\nu_{g_{q}}g_{q}.
\]
Let $\epsilon$, $g_{q}^{0}$ be the constant (resp. group element)
provided by Lemma \ref{lem:-no-invariant-vectros} on inputting $\phi$
with $\|\phi\|=1$. Then it is straightforward to check that $|\Re(\langle g_{q}^{0}*\phi,\phi\rangle)|<(1-\epsilon')$
where $\epsilon'=\epsilon^{2}/2$. Returning to (\ref{eq:rayleigh-nu}),
using $\nu_{g_{q}}=\nu_{(g_{q})^{-1}}$ from (\ref{eq:etastaret})
we get 
\begin{eqnarray*}
\langle\nu*\phi,\phi\rangle & = & \sum_{g_{q}\in\Gamma_{q}}\nu_{g_{q}}\langle g_{q}*\phi,\phi\rangle=\sum_{g_{q}\in\Gamma_{q}}\nu_{g_{q}}\Re\langle g_{q}*\phi,\phi\rangle\\
 & = & \nu_{g_{q}^{0}}\Re\langle g_{q}^{0}*\phi,\phi\rangle+\sum_{g_{q}\neq g_{q}^{0}}\nu_{g_{q}}\Re\langle g_{q}*\phi,\phi\rangle\\
 & \leq & (1-\epsilon')\nu_{g_{q}^{0}}+\sum_{g_{q}\neq g_{q}^{0}}\nu_{g_{q}}=\|\nu\|_{1}-\epsilon'\nu_{g_{q}^{0}}.
\end{eqnarray*}
Also, from (\ref{eq:beta_jtoB}), and that $\sum_{\gamma_{M+jL}}\exp\left(R_{\sigma}(\alpha_{\gamma_{M+jL}}o)\right) = \LL_{\sigma}[1](o)=1$,
we get
\[
\nu_{g_{q}^{0}}\geq C\|\nu\|_{1}
\]
 with constant $C$ independent of $\nu_{g_{q}^{0}}$ and $\eta_{j}$.
So combining this with the preceding estimate and (\ref{eq:rayleigh-nu})
we get
\[
\|\nu\|_{op}\leq\|\nu\|_{1}(1-\epsilon'')
\]
 for some $\epsilon''>0$. Inserting this into (\ref{eq:eta-rayeligh})
gives
\begin{equation}
\|\eta_{j}\|_{op}\leq\|\eta_{j}\|_{\ell^{1}}(1-\epsilon'')^{1/2}.\label{eq:eta_j-operator}
\end{equation}
Using (\ref{eq:eta_j-operator}) in (\ref{eq:mu_1-to-iterated-convolution})
gives for any $\phi\in\ell_{0}^{2}(\Gamma_{q})$ 
\[
\|\mu_{2}*\phi\|_{\ell^{2}}\le\exp(c\Lambda^{^{-L}})^{K-1}(1-\epsilon'')^{K/2} \|\mu_2 \|_1 \|\phi\|.
\]
This is almost the proof of \eqref{m2colv}; we just have to choose $L$.
 Before we do so, we estimate $\| \mu_2 \|_1$. We have
 $$
\| \mu_2 \|_1 \leq  \exp(c\Lambda^{-L})^{K-1} \sum_{I_{M+1,M+L-1},I_{M+L+1,M+2L-1},\ldots,I_{N-L+1,N-1}}\:\sum_{\gamma_{M+L},\gamma_{M+2L},\ldots,\gamma_{N}}\beta_1 \ldots \beta_K .
$$
We now use the first inequality of \eqref{eq:decouple to beta} to get 
$$
\| \mu_2 \|_1 \leq \exp(2c\Lambda^{^{-L}})^{K-1} \sum_{I_{M+1,M+L-1},I_{M+L+1,M+2L-1},\ldots,I_{N-L+1,N-1}}\:\sum_{\gamma_{M+L},\gamma_{M+2L},\ldots,\gamma_{N}}e^{R_{\sigma}^{(\tilde{M})}(\alpha_{\gamma_{M+1}\ldots\gamma_{N}}o)}.
$$
But from inspection of \eqref{eq:tempmu1}, the above is $\exp(2c\Lambda^{^{-L}})^{K-1} \| \mu_1 \|.$

Recall $B$ is the quantifier from Proposition \ref{prop:major}. { \bf We now choose $L$ } large enough so that both
$$
\exp(2c\Lambda^{^{-L}}) \leq B
$$
and 
$$
\exp(c\Lambda^{^{-L}}) \leq ( 1 - \epsilon'')^{-1/4} .
$$
This gives 
\[
\|\mu_{2}*\phi\|_{\ell^{2}}\le \exp(c\Lambda^{^{-L}})^{K-1}(1-\epsilon'')^{K/2} \|\mu_2 \|_1 \|\phi\|_{\ell^2} \leq (1-\epsilon'')^{K/4} \|\mu_2 \|_1 \|\phi\|_{\ell^2}
\]
and 
$$
\| \mu_2 \|_1 \leq \exp(2c\Lambda^{^{-L}})^{K-1} \| \mu_1 \| \leq  B^K \| \mu_1 \|.
$$
This completes the proof of Proposition \ref{prop:major}.

\section{Quasirandomness\label{sec:Quasirandomness}}

In this section we show that `new' representations of $\Sp((\Z/q\Z)^{2g},\omega_{\C})$ have large dimension.
 This is
a version of the \emph{quasirandomness} property of a group that takes into account the level structure of the family
of groups $\Sp((\Z/q\Z)^{2g},\omega_{\C})$. 
\begin{prop}[Quasirandomness estimates]
\label{prop:quasirandomness}There is $C>0$ and $D>0$ such that
any irreducible representation of $\Sp((\Z/q\Z)^{2g},\omega_{\C})$
that does not factor through
\[
\Sp((\Z/q\Z)^{2g},\omega_{\C})\to\Sp((\Z/q_{1}\Z)^{2g},\omega_{\C})
\]
for some $q_{1}|q$ has dimension $\geq Cq^{D}$. 
\end{prop}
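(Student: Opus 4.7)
Following the template of Kelmer--Silberman \cite{KS}, I would reduce to prime powers and then apply Clifford theory with respect to a large abelian congruence kernel. First, by the Chinese Remainder Theorem, writing $q = \prod_{i} p_{i}^{e_{i}}$ gives
$\Sp((\Z/q\Z)^{2g},\omega_{\C}) \cong \prod_{i} \Sp((\Z/p_{i}^{e_{i}}\Z)^{2g},\omega_{\C}),$
and any irreducible representation is an external tensor product $\pi = \bigotimes_{i}\pi_{i}$. The hypothesis that $\pi$ does not factor through any proper congruence quotient translates to the statement that each $\pi_{i}$ does not factor through $\Sp((\Z/p_{i}^{e_{i}-1}\Z)^{2g},\omega_{\C})$ (and is nontrivial if $e_{i}=1$). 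Since $\dim\pi = \prod \dim \pi_{i}$, it suffices to prove, uniformly in $p$ and $e \geq 1$, that any such ``new'' irreducible $\pi$ of $\Sp((\Z/p^{e}\Z)^{2g},\omega_{\C})$ satisfies $\dim \pi \geq C(p^{e})^{D}$ for fixed $C,D > 0$.

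The case $e=1$ is immediate from the classical lower bound on the minimal dimension of a nontrivial irreducible of $\Sp_{2g}(\mathbb{F}_{p})$: the Weil representation has dimension on the order of $p^{g}$, and this is the smallest, giving $\dim \pi \geq c p^{g}$ and so $D = g$ works here. For $e \geq 2$, let $a = \lceil e/2 \rceil$ and consider the kernel $K_{a}$ of the reduction map $\Sp((\Z/p^{e}\Z)^{2g},\omega_{\C}) \to \Sp((\Z/p^{a}\Z)^{2g},\omega_{\C})$. A standard commutator calculation shows $[K_{a},K_{a}] \subset K_{2a} = \{1\}$ because $2a \geq e$, so $K_{a}$ is abelian, and the map $1 + p^{a}X \mapsto X$ identifies $K_{a}$ with the additive group $\mathfrak{sp}_{2g}(\Z/p^{e-a}\Z)$. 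Under this identification, conjugation by $\Sp((\Z/p^{e}\Z)^{2g},\omega_{\C})$ factors through the adjoint action of $\Sp_{2g}(\Z/p^{e-a}\Z)$ on its Lie algebra. Fixing a nondegenerate invariant trace pairing, characters of $K_{a}$ are parametrized by elements $Y \in \mathfrak{sp}_{2g}(\Z/p^{e-a}\Z)$.

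Because $\pi$ is new at level $e$, its restriction to the smaller kernel $K_{e-1}$ is nontrivial, and in particular $\pi|_{K_{a}}$ contains a character $\chi_{Y}$ whose parameter $Y$ is a unit modulo $p$ (otherwise the character would be trivial on $K_{e-1}$). By Clifford's theorem,
\[
\dim \pi \;\geq\; [\Sp((\Z/p^{e}\Z)^{2g},\omega_{\C}) : I_{\chi_{Y}}]
\]
which equals the length of the adjoint orbit of $Y$ in $\mathfrak{sp}_{2g}(\Z/p^{e-a}\Z)$. Reducing modulo $p$, the element $\bar{Y} \in \mathfrak{sp}_{2g}(\mathbb{F}_{p})$ is nonzero, and the dimension of its centralizer is at most $\dim \mathfrak{sp}_{2g} - 2g = 2g^{2} - g$, since the minimal (long-root) nilpotent orbit in $\mathfrak{sp}_{2g}$ has dimension $2g$ and all other nonzero adjoint orbits are at least as large. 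A Hensel-style smoothness argument then lifts this codimension bound up the tower $\Z/p^{e-a}\Z$, so that the adjoint orbit of $Y$ has cardinality at least $c \cdot p^{2g(e-a)} \geq c \cdot p^{ge}$, yielding $\dim \pi \geq c (p^{e})^{g}$ and again $D = g$.

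The main obstacle I anticipate is the uniformity of the centralizer/orbit estimate across all primes and exponents: one must verify that the adjoint orbit at each level of the congruence tower lifts the minimal orbit at the bottom with the expected smooth fibre of dimension equal to the $\mathbb{F}_{p}$-centralizer, which requires a careful Hensel or smooth-group-scheme argument, and one must also handle small primes (in particular $p=2$, which may need to be excluded into $Q_{0}$ via the coprimality hypothesis) where the Weil representation bound or the exponential identification of $K_{a}$ can fail. The rest of the argument is essentially formal Clifford theory and CRT bookkeeping, which follows \cite{KS} with only cosmetic changes for $\Sp_{2g}$ in place of $\SL_{2}$.
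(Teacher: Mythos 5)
Your proposal follows the same template as the paper's proof: CRT reduction to prime powers, base case from the minimal-degree bound for $\Sp_{2g}(\mathbb{F}_p)$, the abelian congruence kernel $K_a\cong\mathfrak{sp}_{2g}(\Z/p^{\lfloor e/2\rfloor}\Z)$ (your $a=\lceil e/2\rceil$ is exactly the paper's $r-R$), Clifford theory to reduce to a lower bound on a coadjoint orbit, and then a centralizer estimate. The two genuine points of departure are both in the orbit-size step, and one of them is where your sketch is incomplete. First, you invoke the fact that the minimal (long-root) nilpotent orbit is the smallest nonzero adjoint orbit, giving the centralizer codimension $\geq 2g$; this is true and yields the sharper exponent $D\approx g$, whereas the paper deliberately uses only the trivial bound ``$\mathfrak{g}(\mathbb{F}_p)$ has no center'' (codimension $\geq 1$) and is content with $D$ near $1/2$. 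Second, and more significantly, you propose to lift the orbit estimate from $\mathbb{F}_p$ to $\Z/p^{R}\Z$ by a ``Hensel-style smoothness argument.'' This is not a formality: the centralizer scheme of an arbitrary $Y$ (e.g.\ a non-regular nilpotent, or a general mixed element) need not be smooth over $\Z_p$, so the naive Hensel lift of the orbit can fail. The paper avoids this entirely by using the elementary inductive inequality $|C_{H_{p^r}}(X)|\leq |C_{H_p}(\bar X)|\,|C_{\mathfrak{g}(\mathbb{F}_p)}(\bar X)|^{R-1}$ from Kelmer--Silberman, together with Springer--Steinberg (boundedly many components of the centralizer over $\overline{\mathbb{F}_p}$) and Nori's point-count bound for the base level. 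If you substitute that inductive inequality for your Hensel step, your stronger $2g$-codimension bound still goes through and you recover $D\approx g$; as written, however, the smoothness assertion is a gap that would need either a proof (with the attendant small-prime exclusions you anticipate) or replacement by the inductive argument. You are right that $p=2$ must be excluded — the paper also implicitly works with odd $p$, relying on the nondegeneracy of the trace form.
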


We follow the type of argument given by Kelmer and Silberman in \cite[Section 4]{KS}
for rank one groups (see also \cite{MJEMS} for a small improvement
to that argument). We may treat the group $\Sp_{2g}(\Z)$ without
loss of generality, that is, we assume the symplectic form is the
standard one. Let $g\geq2.$ Let $q\in\mathbf{N}$ and let $(\rho,V)$
be an irreducible unitary representation of $\Sp_{2g}(\Z/q\Z$) that
is not obtained by a composition
\[
\Sp_{2g}(\Z/q\Z)\to\Sp_{2g}(\Z/q_{1}\Z)\xrightarrow{\rho'}U(V)
\]
with $q_{1}|q$. We refer to this property as $\rho$ being \emph{new.}

\subsection{The case when $q$ is prime\label{sub:The-case-when-q-is-prime}}

For $p$ an odd prime, let $\mathbb{F}_{p}$ denote the finite field
with $p$ elements. The table of Seitz and Zalesskii in \cite[Table 1]{SZ}
implies that $P\Sp_{2g}(\mathbb{F}_{p})$ has no projective complex
irreducible representation of dimension $<\frac{1}{2}(p^{g}-1)$ and
hence this is also a lower bound for the dimension of an irreducible
representation of $\Sp_{2g}(\mathbb{F}_{p})$.

\subsection{The case $q=p^{r}$}

In this section we prove the following
\begin{prop}
\label{prop:prime-power-case}There is some $C>0$ depending only
on $g$ such that for all $r\geq2,$ letting $R:=\lfloor r/2\rfloor$
any new representation $(\rho,V)$ of $\Sp_{2g}(\Z/p^{r}\Z)$ has
dimension at least
\[
\dim\rho\geq Cp^{R}.
\]
\end{prop}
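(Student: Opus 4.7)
The strategy is to follow the Clifford-theoretic argument of Kelmer--Silberman \cite[Section 4]{KS}, with the rank-one input replaced by a direct adjoint-orbit bound on $\mathfrak{sp}_{2g}$.

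First, the plan is to set up the congruence filtration. For $0\leq j\leq r$ let $K_j:=\ker\bigl(\Sp_{2g}(\Z/p^r\Z)\to\Sp_{2g}(\Z/p^j\Z)\bigr)$, a normal subgroup identified as an abelian group with (a quotient of) $\mathfrak{sp}_{2g}(\Z/p^{r-j}\Z)$ via the Cayley-type map $I+p^j X\mapsto X$. A direct commutator expansion gives $[K_j,K_j]\subset K_{2j}$, so $K_j$ is abelian whenever $2j\geq r$. Setting $R=\lfloor r/2\rfloor$ and $K:=K_{r-R}$, the condition $2(r-R)\geq r$ holds, so $K$ is an abelian normal subgroup of $\Sp_{2g}(\Z/p^r\Z)$ canonically isomorphic to $\mathfrak{sp}_{2g}(\Z/p^R\Z)$.

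Next I would apply Clifford theory along $K$. Fixing a primitive additive character $\psi$ of $\Z/p^R\Z$, the Pontryagin dual $\widehat K$ is parameterized by $Y\in\mathfrak{sp}_{2g}(\Z/p^R\Z)$ via $\chi_Y(I+p^{r-R}X)=\psi(\mathrm{tr}(YX))$; the conjugation action of $\Sp_{2g}(\Z/p^r\Z)$ on $\widehat K$ then factors through the adjoint action of $\Sp_{2g}(\Z/p^R\Z)$ on $\mathfrak{sp}_{2g}(\Z/p^R\Z)$. Irreducibility of $\rho$ forces the characters appearing in $\rho|_K$ to form a single $\mathrm{Ad}$-orbit $\mathcal O\subset\mathfrak{sp}_{2g}(\Z/p^R\Z)$, and Clifford theory gives $\dim\rho\geq|\mathcal O|$. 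Since $\rho$ is new, $\rho|_{K_{r-1}}$ is non-trivial; identifying $K_{r-1}\hookrightarrow K$ with $p^{R-1}\mathfrak{sp}_{2g}(\Z/p^R\Z)\hookrightarrow\mathfrak{sp}_{2g}(\Z/p^R\Z)$, a direct calculation using non-degeneracy of the trace form on $\mathfrak{sp}_{2g}$ in odd characteristic shows $\chi_Y|_{K_{r-1}}$ is non-trivial iff $\bar Y:=Y\bmod p\neq 0$ in $\mathfrak{sp}_{2g}(\mathbb{F}_p)$. Hence some $Y\in\mathcal O$ is \emph{primitive}.

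The remaining step is to bound $|\mathcal O|\geq Cp^R$ for such a primitive $Y$, which I would prove by induction on $R$ via an infinitesimal orbit argument. For $g=I+p^{R-1}Z$ with $Z\in\mathfrak{sp}_{2g}(\mathbb{F}_p)$ one has $gYg^{-1}\equiv Y+p^{R-1}[Z,\bar Y]\pmod{p^R}$; since $\mathfrak{sp}_{2g}$ is centreless in odd characteristic, $\mathrm{ad}_{\bar Y}\neq 0$ on $\mathfrak{sp}_{2g}(\mathbb{F}_p)$, and so the fibre of the reduction $\mathcal O\to\mathcal O\bmod p^{R-1}$ over $Y\bmod p^{R-1}$ has size at least $p$. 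The base case $R=1$ amounts to showing that the $\mathrm{Ad}$-orbit of a non-zero $\bar Y\in\mathfrak{sp}_{2g}(\mathbb{F}_p)$ has size $\geq C\,p$, which follows from the centralizer of $\bar Y$ in the algebraic group $\Sp_{2g}$ being a proper closed subgroup, combined with a Lang--Weil point count. Composing the two estimates gives $\dim\rho\geq|\mathcal O|\geq Cp^R$. The main technical point I expect will be verifying the $p$-fold-blowup at each inductive step uniformly across all primitive $\bar Y$ (in particular for nilpotent $\bar Y$ with large centralizer); but because we only need the weak factor $p$, rather than the true codimension of the stabilizer, no fine classification of $\mathrm{Ad}$-orbits is required, and the argument of \cite[Section 4]{KS} transfers essentially verbatim.
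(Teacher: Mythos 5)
Your proposal is correct and follows the same Clifford-theoretic framework as the paper: reduce to the abelian normal subgroup $K_{r-R}\cong\mathfrak{sp}_{2g}(\Z/p^R\Z)$, use newness to extract a character parameterized by a primitive $Y\in\mathfrak{sp}_{2g}(\Z/p^R\Z)$, and bound $\dim\rho$ from below by the size of an $\Ad$-orbit. Where you genuinely diverge from the paper is in the orbit-size lemma. The paper computes orbit size via orbit--stabilizer, feeds in $|H_{p^R}|\gg p^{R\dim\Sp_{2g}}$, and controls the centralizer by Kelmer--Silberman's inductive bound $|C_{H_{p^R}}(X)|\leq|C_{H_p}(\bar X)||C_{\mathfrak{g}(\mathbb{F}_p)}(\bar X)|^{R-1}$ together with Springer--Steinberg (bounded component count of the algebraic centralizer) and Nori's point bound, finally invoking centerlessness of $\mathfrak{sp}_{2g}$ to get codimension $\geq 1$. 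You instead run a direct fibrewise induction: for $R\geq 2$, elements $g=I+p^{R-1}Z$ lie in $\Sp_{2g}(\Z/p^R\Z)$ and move $Y$ by $p^{R-1}[Z,\bar Y]\bmod p^R$, and since $\mathrm{ad}_{\bar Y}\neq 0$ the fibre of $\mathcal O\to\mathcal O\bmod p^{R-1}$ has size $\geq p$ (and equivariance makes all fibres equal), giving $|\mathcal O_R|\geq p\,|\mathcal O_{R-1}|$ with a Lang--Weil count for the base $R=1$. Your route is more elementary in the inductive step (no Springer--Steinberg, no explicit centralizer induction), while the paper's version is quantitatively more flexible, since improving the codimension bound $e$ on $C_{\mathfrak{g}(\mathbb{F}_p)}(\bar X)$ would directly improve the exponent; with the fibrewise argument, getting a factor $p^e$ per level would require a correspondingly finer statement about the rank of $\mathrm{ad}_{\bar Y}$. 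Both hinge on the same essential input at the base, namely that the centralizer of $\bar Y\neq 0$ is a proper closed subgroup, for which one still needs a Nori/Lang--Weil-type point count and the observation that $\mathfrak{sp}_{2g}(\mathbb{F}_p)$ is centerless for odd $p$.
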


Let $q=p^{r}$. Write $H_{q}:=\Sp_{2g}(\Z/q\Z)$ and for $q'|q$ let
$H_{q}(q')$ be the kernel of the reduction modulo $q'$ map
\[
H_{q}\to H_{q'}.
\]
Let $\mathfrak{g}(\Z/q\Z)$ denote the Lie algebra of $\Sp_{2g}$
over $\Z/q\Z$. We view this as an abelian group. Let $R=\lfloor r/2\rfloor$.
The congruence subgroup $H_{p^{r}}(p^{r-R})$ is an abelian normal
subgroup of $H_{p^{r}}$ that is naturally isomorphic to $\mathfrak{g}(\Z/p^{R}\Z)$.
The action of $H_{p^{r}}$ on $H_{p^{r}}(p^{r-R})$ by conjugation
descends to an action of $H_{p^{R}}$. After using the isomorphism
$H_{p^{r}}(p^{r-R})\cong\mathfrak{g}(\Z/p^{R}\Z)$ this conjugation
action is identified with the Adjoint action of $H_{p^{R}}$ on $\mathfrak{g}(\Z/p^{R}\Z)$,
i.e.
\[
\Ad(g)v=gvg^{-1},\quad g\in H_{p^{R}},v\in\mathfrak{g}(\Z/p^{R}\Z).
\]

Let $(\rho,V)$ be a unitary representation of $H_{q}$. Suppose $R\geq1.$
If $\rho$ is trivial when restricted to $H_{q}(p^{R})$ then $\rho$
is not a new representation. More generally, if $\rho$ is new, then
the restriction of $\rho$ to $H_{q}(p^{r-R})$ must not be trivial
on any $H_{q}(p^{r-R+\eta})$ with $\eta\in\Z_{+}$ since these are
also normal subgroups with $H_{q}/H_{q}(p^{r-R+\eta})\cong H_{p^{r-R+\eta}}$.
Notice $H_{q}(p^{r-R+\eta})\leq H_{q}(p^{r-R})$ corresponds to the
inclusion $p^{\eta}\mathfrak{g}(\Z/p^{R-\eta}\Z)\leq\mathfrak{g}(\Z/p^{R}\Z)$.

The strategy is to consider the $H_{p^{R}}$ invariant set of characters
of $\mathfrak{g}(\Z/p^{R}\Z)$ that appear when restricting $\rho$
to $H_{p^{r}}(p^{r-R})\cong\mathfrak{g}(\Z/p^{R}\Z)$, since the size
of this set gives a lower bound for the dimension of $\rho$.

The Killing form on $\mathfrak{g}(\Z/p^{R}\Z)$ is non-degenerate
which allows us to identify the unitary dual $\widehat{\mathfrak{g}(\Z/p^{R}\Z)}$
with $\mathfrak{g}(\Z/p^{R}\Z)$. Under this identification, the co-Adjoint
action on characters becomes an Adjoint action on $\mathfrak{g}(\Z/p^{R}\Z)$.
Moreover any character that is non trivial on each $H_{q}(p^{r-R+\eta})$,
$\eta\in\Z_{+}$, becomes an element of $\mathfrak{g}(\Z/p^{R}\Z)$
which is not $\equiv0\bmod p$.

We have therefore reduced Proposition \ref{prop:prime-power-case}
 to the following Lemma.
\begin{lem}
There is some $C>0$ depending only on $g$ such that for all $R\geq1$
the $H_{p^{R}}$-Adjoint orbit of any $X\in\mathfrak{g}(\Z/p^{R}\Z)$
with $X\not\equiv0\bmod p$ has size 
\[
|\Ad(H_{p^{R}}).X|\geq Cp^{R}.
\]

\end{lem}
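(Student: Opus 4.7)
The plan is to apply the orbit-stabilizer theorem: $|\Ad(H_{p^R}).X| = |H_{p^R}|/|Z_{H_{p^R}}(X)|$, and since $|H_{p^R}| \geq c(g)\, p^{R \dim\mathfrak{g}}$, it suffices to prove $|Z_{H_{p^R}}(X)| \leq C(g)\, p^{R(\dim\mathfrak{g}-1)}$. I would control the centralizer by filtering it through the congruence chain $H_{p^R} \supset H_{p^R}(p) \supset \cdots \supset H_{p^R}(p^R) = \{1\}$, using the canonical identification $H_{p^R}(p^k)/H_{p^R}(p^{k+1}) \cong \mathfrak{g}(\mathbb{F}_p)$ given by $1 + p^k M \mapsto M \bmod p$ at each level $k \geq 1$.

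At a level $k \geq 1$, if $g = 1 + p^k M + O(p^{k+1})$ lies in $Z_{H_{p^R}}(X) \cap H_{p^R}(p^k)$, then expanding $gXg^{-1}$ and reducing modulo $p^{k+1}$ produces the infinitesimal centralizing condition $[M, X_0] \equiv 0 \bmod p$, where $X_0 := X \bmod p$. Hence the image of this intersection in the quotient $\mathfrak{g}(\mathbb{F}_p)$ lies in $\ker(\mathrm{ad}\, X_0)$. Since $X \not\equiv 0 \bmod p$ and $\mathfrak{sp}_{2g}$ has trivial center (in good characteristic), $\mathrm{ad}\, X_0$ is a nonzero $\mathbb{F}_p$-linear endomorphism, so $|\ker \mathrm{ad}\, X_0| \leq p^{\dim\mathfrak{g}-1}$. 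Iterating over the $R-1$ successive layers gives $|Z_{H_{p^R}}(X) \cap H_{p^R}(p)| \leq p^{(R-1)(\dim\mathfrak{g}-1)}$.

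At the top layer, $Z_{H_{p^R}}(X)/(Z_{H_{p^R}}(X) \cap H_{p^R}(p))$ embeds into the finite-field centralizer $Z_{\Sp_{2g}(\mathbb{F}_p)}(X_0)$, which by orbit-stabilizer has size $|\Sp_{2g}(\mathbb{F}_p)|/|\Ad(\Sp_{2g}(\mathbb{F}_p)).X_0|$. Since the algebraic adjoint orbit of any nonzero element of $\mathfrak{sp}_{2g}$ has dimension at least $2g$ (the minimal nilpotent orbit dimension of the symplectic Lie algebra), a uniform Lang--Weil-type count yields $|\Ad(\Sp_{2g}(\mathbb{F}_p)).X_0| \geq c'(g)\, p^{2g}$, hence $|Z_{\Sp_{2g}(\mathbb{F}_p)}(X_0)| \leq C'(g)\, p^{\dim\mathfrak{g}-2g}$. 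Multiplying the layer bounds gives $|Z_{H_{p^R}}(X)| \leq C''(g)\, p^{R(\dim\mathfrak{g}-1)}$, with room to spare, completing the proof.

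The step I expect to be the main obstacle is securing the uniform-in-$p$ lower bound on the orbit size of $X_0$ in the top layer. For $p$ large compared to $g$ this is a direct Lang--Weil estimate on the orbit variety, but at small primes one must argue via uniform affine-variety point bounds for the centralizer subscheme, or by explicit case-check; in characteristic $2$ the assertion that $\mathfrak{sp}_{2g}$ is centerless also requires separate verification (or one restricts attention to odd $p$, which is consistent with the odd-$q$ hypothesis used elsewhere). Once this orbit estimate is in hand, the remainder is routine bookkeeping with the congruence filtration.
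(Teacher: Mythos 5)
Your proposal follows essentially the same strategy as the paper: orbit-stabilizer, the congruence filtration $H_{p^{R}}\supset H_{p^{R}}(p)\supset\cdots$, the identification of each graded piece with $\mathfrak{g}(\mathbb{F}_{p})$, and then the observation that the image of the centralizer in each layer $k\geq 1$ lies in $\ker(\mathrm{ad}\,X_{0})$, which is a proper subspace since $\mathfrak{sp}_{2g}(\mathbb{F}_{p})$ is centerless. Your inductive layer-by-layer bound $|Z_{H_{p^{R}}}(X)\cap H_{p^{R}}(p)|\leq p^{(R-1)(\dim\mathfrak{g}-1)}$ is exactly the content of the Kelmer--Silberman induction cited in the paper. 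The one point of genuine divergence is the top layer $Z_{\Sp_{2g}(\mathbb{F}_{p})}(X_{0})$: you propose a Lang--Weil count of the orbit of $X_{0}$ and correctly flag uniformity at small primes as the main obstacle, whereas the paper instead invokes Springer--Steinberg (to bound the number of components of the algebraic centralizer $C_{\Sp_{2g}}(X_{0})$) together with Nori's uniform affine-variety point bound $\leq(p+1)^{\dim}$ per component. Nori's bound is precisely the ``uniform affine-variety point bound'' you allude to, and it makes the small-prime worry disappear; the characteristic-$2$ issue with centerlessness is also moot because $q$ is odd throughout. Your additional claim that the orbit of any nonzero $X_{0}$ has dimension $\geq 2g$ is a strengthening the paper does not need -- the paper is content with codimension $\geq 1$ from centerlessness, since the dominant contribution to the final bound comes from the $R-1$ lower layers, not the top one.
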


\begin{proof}
By orbit-stabilizer theorem the orbit has size at least 
\begin{equation}
\frac{|H_{p^{R}}|}{|C_{H_{p^{R}}}(X)|}\label{eq:quotient-centralizer}
\end{equation}
where we write $C$ to stand for centralizer, therefore $C_{H_{p^{r}}}(X)=\{h\in H_{p^{R}}\::\:hXh^{-1}=X\}.$
Since $H_{p^{R}}$ is an $R-1$ fold extension of $H_{p}$ by groups
isomorphic to $\mathfrak{g}(\mathbb{F}_{p})$ we know $|H_{p^{R}}|=|H_{p}||\mathfrak{g}(\mathbb{F}_{p})|^{R-1}\gg p^{R.\dim(\Sp_{2g})}=p^{g(2g+1)R}.$
This gives the bound we will use for the numerator of (\ref{eq:quotient-centralizer}).

Considering next the denominator of (\ref{eq:quotient-centralizer}),
by an elementary induction argument appearing in \cite[Proof of Proposition 4.3]{KS}

\begin{equation}
|C_{H_{p^{r}}}(X)|\leq|C_{H_{p}}(X\bmod p)||C_{\mathfrak{g}(\mathbb{F}_{p})}(X\bmod p)|^{R-1}\label{eq:centralizer-temp}
\end{equation}
where the latter centralizer is $C_{\mathfrak{g}(\mathbb{F}_{p})}(X\bmod p)=\{y\in\mathfrak{g}(\mathbb{F}_{p})\::\:[y,X\bmod p]\equiv0\}.$
According to Springer and Steinberg \cite[II. 4.1, 4.2, IV. 2.26]{SS},
the algebraic group $C_{\Sp_{2g}}(X\bmod p)$ defined over $\mathbb{F}_{p}$
has a number of components bounded by a constant depending only on
$g$. By a bound of Nori \cite[Lemma 3.5]{NORI} each component can
have at most $\leq(p+1)^{\dim C_{\Sp_{2g}}(X\bmod p)}$ points over
$\mathbb{F}_{p}$. But $\dim C_{\Sp_{2g}}(X\bmod p)$ is also the
dimension of the centralizer of $X\bmod p$ in $\mathfrak{g}(\mathbb{F}_{p})$
so we have now reduced the estimation of the right hand side of (\ref{eq:centralizer-temp})
to a bound for 
\[
\dim C_{\mathfrak{g}(\mathbb{F}_{p})}(X')
\]
where $X'=X\bmod p$ is a nonzero element of $\mathfrak{g}(\mathbb{F}_{p})$. 

Assume the bound $\dim C_{\mathfrak{g}(\mathbb{F}_{p})}(X')\leq\dim(\Sp_{2g})-e=g(2g+1)-e.$
Then putting our previous estimates together the orbit has size at
least
\[
\gg\frac{p^{g(2g+1)R}}{(p+1)^{g(2g+1)-e}(p^{g(2g+1)-e})^{R-1}}\gg p^{eR}.
\]
Since it is not particularly important here to optimize \emph{$e$},
we give the easy argument that one may take $e=1$ since $\mathfrak{g}(\mathbb{F}_{p})$
has no nontrivial center\footnote{One may definitely do better here, and it would be good to work out
the best possible bound, but it is not the purpose of the current
paper.}. This gives the result stated in the lemma.

\end{proof}

\subsection{The case of general moduli}

If $p_{i}$ are primes and 
\[
q=\prod_{i=1}^{M}p_{i}^{m_{i}}
\]
 is the prime factorization of $q$, then we have by the Chinese remainder
theorem
\[
\Sp_{2g}(\Z/q\Z)\cong\prod_{i=1}^{M}\Sp_{2g}(\Z/p_{i}^{m_{i}}\Z).
\]
Then $\rho$ splits as a tensor product 
\[
\rho=\bigotimes_{i=1}^{M}\rho_{i}
\]
 where $\rho_{i}$ are irreducible representations of $\Sp_{2g}(\Z/p_{i}^{m_{i}}\Z)$.
Since $\rho$ is new, all of the $\rho_{i}$ are new. Now using Proposition
\ref{prop:prime-power-case} and the bounds for the case of prime
modulus from Section \ref{sub:The-case-when-q-is-prime} gives
\[
\dim\rho\geq\prod_{i:\:m_{i}=1}\frac{1}{2}(p_{i}^{g}-1)\prod_{i:\:m_{i}>1}Cp_{i}^{\lfloor m_{i}/2\rfloor}\geq q^{1/2}(C')^{-\omega(q)}
\]
given $g\geq2$ for some $C'>1$ and $\omega(q)$ standing for the
number of distinct prime factors of $q.$ But $(C')^{\omega(q)}\ll_{\epsilon}q^{\epsilon}$
for any $\epsilon>0$. This concludes the proof of Proposition \ref{prop:quasirandomness},
in fact, our proof shows that one may take $D$ as close as one likes
to $1/2$ provided one chooses $C$ appropriately.
\begin{verbatim}

\end{verbatim}

\def\cprime{$'$}

\noindent Michael Magee, \\Department of Mathematical Sciences, \\ Durham University,\\  Durham DH1 3LE, U.K. \\  {\tt michael.r.magee@durham.ac.uk}\\ \\

\end{document}